\newcommand{\R}{\mathbb{R}}
\newcommand{\C}{\mathbb{C}}
 \DeclareMathOperator{\Ort}{O}
\newcommand{\D}{\mathrm{d}} 
\DeclareMathOperator{\Ree}{Re}
\newtheorem{theorem}{Theorem}[section]
\newtheorem{corollary}[theorem]{Corollary}
\newtheorem{lemma}[theorem]{Lemma}
\newtheorem{proposition}[theorem]{Proposition}
\theoremstyle{definition}
\newtheorem{remark}[theorem]{Remark}
\newtheorem{definition}[theorem]{Definition}
\theoremstyle{plain}
\numberwithin{equation}{section}
\title{Polynomial conserved quantities for constrained Willmore surfaces}
\author{A.C. Quintino and S.D. Santos}
\address{CENTRO DE MATEM\'{A}TICA, APLICA\c{C}\~{O}ES FUNDAMENTAIS E INVESTIGA\c{C}\~{A}O OPERACIONAL, FACULDADE DE CI\^{E}NCIAS DA UNIVERSIDADE DE LISBOA\\1749-016 LISBOA\\PORTUGAL}
\email{amquintino@ciencias.ulisboa.pt}
\address{CENTRO DE MATEM\'{A}TICA, APLICA\c{C}\~{O}ES FUNDAMENTAIS E INVESTIGA\c{C}\~{A}O OPERACIONAL, FACULDADE DE CI\^{E}NCIAS DA UNIVERSIDADE DE LISBOA\\1749-016 LISBOA\\PORTUGAL}
\email{sdsantos@ciencias.ulisboa.pt}
\begin{document}

\begin{abstract}
We define a hierarchy of special classes of constrained Willmore surfaces by means of the existence of a polynomial conserved quantity of some type, filtered by an integer. Type 1 with parallel top term characterises parallel mean curvature surfaces and, in codimension 1, type 1 characterises constant mean curvature surfaces. We show that this hierarchy is preserved under both spectral deformation and B\"{a}cklund transformation, for special choices of parameters, defining, in particular, transformations of constant mean curvature surfaces into new ones, with preservation of the mean curvature, in the latter case.
\end{abstract}

\maketitle

\section{Introduction}
Willmore surfaces are the critical points of the Willmore functional. A larger class arises when one imposes the weaker requirement that a surface is a critical point of the Willmore functional only with respect to infinitesimally conformal variations: these are the constrained Willmore surfaces. Constrained Willmore surfaces in the conformal sphere are characterised \cite{blaschke, BPP, BC, SD, ejiri,rigoli} by the (possibly perturbed) harmonicity of the central sphere congruence (following the terminology introduced in \cite{BQ}). The theory of harmonic maps then applies and, in particular, the machinery of integrable systems becomes available. First of all, a zero-curvature representation is established: a constrained Willmore surface comes \cite{BC} equipped with an associated family $\D^{\lambda}$ of flat connections, depending on a spectral parameter $\lambda\in\C\backslash\{0\}$. This structure gives rise to two kinds of symmetries: a spectral deformation \cite{BC,SD,BQ}, by exploiting a scaling freedom in the spectral parameter, and B\"{a}cklund transformations \cite{BQ}, which arise by applying chosen gauge transformations to the family of flat connections.

Alike what happens in the case of constrained Willmore surfaces, the isothermic surface condition amounts \cite{BurDonPedPin09} just as well to the flatness of a certain family $\nabla^{t}$ of connections, indexed in $\R$. In \cite{BS}, the classical notion of special isothermic surface,
introduced by Darboux in connection with deformations of quadrics, is given a simple explanation in terms of the integrable
systems approach to isothermic surfaces. They are realised as a
particular case of a hierarchy of classes of isothermic surfaces
filtered by an integer $d$. Here is the basic idea: The theory of ordinary differential equations
ensures that we can find $\nabla^{t}$-parallel sections depending
smoothly on the spectral parameter $t$. The existence of such
sections with polynomial dependence of degree $d$ on $t$ is of
particular geometric significance, as first observed by F. E.
Burstall and D. Calderbank (see the forthcoming paper
"Conformal submanifold geometry IV-V"), and gave rise to the notion
of polynomial conserved quantity of type $d$, developed in \cite{BS}, in the
isothermic context, where the notion of special isothermic surface of type $d$ is introduced, having the classical notion as a particular case ($d=2$).

We are in this way led to the central idea of
this article, that of \emph{special constrained Willmore surface of type $d$}, a constrained Willmore surface admitting some \textit{polynomial conserved quantity of type $d$} in the
constrained Willmore context, that is, a certain family $p(\lambda)$ of $\D^{\lambda}$-parallel sections with Laurent polynomial dependence on $\lambda$, with degree smaller or equal to $d$.

At the intersection of the class of constrained Willmore surfaces with the class of
isothermic surfaces lies, in particular, that of non-zero parallel mean
curvature vector surfaces in space-forms, and that of constant mean
curvature surfaces in $3$-dimensional space-forms. In the isothermic context, type $1$ characterises \cite{BurCal,BS} the $H$-generalised surfaces in space-forms. We prove that, in the constrained Willmore context, type $1$ with parallel top term characterises parallel mean curvature vector surfaces in space-forms. It follows, in particular, that, in codimension $1$, type $1$ characterises constant mean curvature surfaces, in both contexts.

We prove that the class of constrained Willmore surfaces of any given type is preserved by both spectral deformation and B\"{a}cklund transformation, for special
choices of parameters. Both constrained Willmore spectral deformation and B\"{a}cklund transformation prove to preserve also the parallelism of the top term of a polynomial conserved quantity. For the particular case of type $1$, this defines transformations of parallel mean curvature surfaces into new ones, and, in the particular case of codimension $1$, transformations of constant mean curvature surfaces into new ones, with preservation of both the space-form and the mean curvature, under B\"{a}cklund transformation.

Our theory is local and, throughout the text, with no need for further reference, restriction to a suitable non-empty open set shall be underlying.

The results of this paper are based, in part, on those in the first
author's PhD thesis \cite{aurea1,aurea} and some of them were announced in \cite{aurea2}.\newline

\textbf{Acknowledgements.} The authors would like to thank Rui Pacheco for helpful conversations. Very special thanks are due to Fran Burstall and David Calderbank, who first observed the particular geometric significance of polynomial dependence on a parameter for some families of parallel sections and who have had a decisive influence on the origin of this paper.

The first-named author was supported in the research work carried out in \cite{aurea1,aurea} by Funda\c{c}\~{a}o para a Ci\^{e}ncia e a Tecnologia, Portugal, with a PhD scholarship, and by Funda\c{c}\~{a}o da Faculdade de Ci\^{e}ncias da Universidade de Lisboa, with a postdoctoral scholarship.

\section{Constrained Willmore surfaces in the conformal $n$-sphere}
Consider $\underline{\C}^{n+2}=\Sigma\times(\R^{n+1,1})^{\C}$
provided with the complex bilinear extension of the metric on
$\underline{\R}^{n+1,1}$. In what follows, we may abuse notation and make no explicit
distinction between a bundle and its complexification. Throughout this text, we consider the identification
$$\wedge^{2}\R^{n+1,1}\cong o(\R^{n+1,1})$$ of the exterior power
$\wedge^{2}\R^{n+1,1}$ with the orthogonal algebra $o(\R^{n+1,1})$
via $$u\wedge v(w):=(u,w)v-(v,w)u$$ for $u,v,w\in\mathbb R^{n+1,1}$.

\subsection{Conformal submanifold geometry}

Our study is one of surfaces in $n$-dimensional space-forms, with
$n\geq 3$, from a conformally-invariant viewpoint. For this, we
find a convenient setting in Darboux's light-cone model of the
conformal $n$-sphere \cite{darbouxsphere}. We follow
the modern account presented in \cite{Bur06}. So contemplate the
light-cone $\mathcal{L}$ in the Lorentzian vector space $\R^{n+1,1}$
and its projectivisation $\mathbb{P}(\mathcal{L})$, provided with
the conformal structure defined by a metric $g_{\sigma}$ arising
from a never-zero section $\sigma$ of the tautological bundle
$\pi:\mathcal{L}\rightarrow\mathbb{P}(\mathcal{L})$ via
$g_{\sigma}(X,Y)=(\D\sigma (X), \D\sigma(Y))$. For
$v_{\infty}\in\R^{n+1,1}_{\times}$, set
$$S_{v_{\infty}}:=\{v\in\mathcal{L}:(v,v_{\infty})=-1\},$$ an
$n$-dimensional submanifold of $\mathbb R^{n+1,1}$ which inherits from
$\R^{n+1,1}$ a positive definite metric of (constant) sectional
curvature $-(v_{\infty},v_{\infty})$. By construction, the bundle
projection $\pi$ restricts to give a conformal diffeomorphism
$\pi_{\vert{S_{v_{\infty}}}}:S_{v_{\infty}}\rightarrow
\mathbb{P}(\mathcal{L})\backslash\mathbb{P}(\mathcal{L}\cap\langle
v_{\infty}\rangle^{\perp})$. In particular, choosing $v_{\infty}$ to be
time-like identifies $\mathbb{P}(\mathcal{L})$ with the conformal
$n$-sphere, $$S^{n}\cong\mathbb{P}(\mathcal{L}).$$

For us, a mapping $\Lambda:\Sigma\rightarrow
\mathbb{P}(\mathcal{L})$, of a surface $\Sigma$, is the
same as a null line subbundle of the trivial bundle
$\underline{\mathbb{R}}^{n+1,1}=\Sigma\times\mathbb{R}^{n+1,1}$.
Let then $\Lambda:\Sigma\rightarrow
\mathbb{P}(\mathcal{L})$ be an immersion of an oriented
surface $\Sigma$, which we provide with the conformal structure
$\mathcal{C}_{\Lambda}$ induced by $\Lambda$ and with the canonical
complex structure. Set
$$\Lambda^{1,0}:=\Lambda\oplus
\D\sigma(T^{1,0}M),\,\,\,\,\Lambda^{0,1}:=\Lambda\oplus
\D\sigma(T^{0,1}M),$$ defined independently of the choice of
$\sigma\in\Gamma(\Lambda)$ never-zero, and then
$\Lambda^{(1)}:=\Lambda^{1,0}+\Lambda^{0,1}$. Let
$S:\Sigma\rightarrow \mathcal{G}:=\mathrm{Gr}_{(3,1)}(\mathbb
R^{n+1,1})$ be the central sphere congruence of $\Lambda$,
$$S=\Lambda^{(1)}\oplus\langle\triangle\sigma\rangle,$$for $\sigma$
a lift of $\Lambda$ and $\triangle\sigma$ the Laplacian of $\sigma$,
with respect to the metric $g_{\sigma}$. We have a decomposition
$\underline{\mathbb R}^{n+1,1}=S\oplus S^{\perp}$ and then a
decomposition of the trivial flat connection $\D$ on
$\underline{\mathbb R}^{n+1,1}$ as $$\D=\mathcal{D}\oplus\mathcal{N},$$ for
$\mathcal{D}$ the connection given by the sum of the connections
induced on $S$ and $S^{\perp}$ by $\D$. At times, it will be
convenient to make en explicit reference to the surface $\Lambda$,
writing $S_{\Lambda}$, $\mathcal{D}_{\Lambda}$ (or, equivalently, $\mathcal{D}_{S}$) and $\mathcal{N}_{\Lambda}$ (or, equivalently, $\mathcal{N}_{S}$) for $S$, $\mathcal{D}$ and $\mathcal{N}$, respectively. For later reference, we
define analogously $\mathcal{D}^{\hat{\D}}$ and $\mathcal{N}^{\hat{\D}}$, for a general connection
$\hat{\D}$ on $\underline{\mathbb{C}}^{n+2}$.

\subsection{Constrained Willmore surfaces and flat connections}
Willmore surfaces are characterised \cite{blaschke,ejiri,rigoli} by the harmonicity of the central sphere congruence. More generally:
\begin{theorem}\cite{BPP, BC, SD} \label{CWtemp}
$\Lambda$ is a constrained Willmore surface if and only if there exists a real form
$q\in\Omega ^{1}(\Lambda\wedge\Lambda ^{(1)})$ with
\begin{equation}\label{eq:curlyDextderivofq}
\D^{\mathcal{D}}q=0
\end{equation}
such that
\begin{equation}\label{eq:mainCWeq}
\D^{\mathcal{D}}*\mathcal{N}=2\,[q\wedge *\mathcal{N}],
\end{equation}
where $[\,,\,]$ denotes the $2$-form defined from the Lie Bracket $[\,\,,\,]$ in $o(\R^{n+1,1})$.
\end{theorem}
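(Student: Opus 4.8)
The plan is to derive the characterisation as the Euler--Lagrange equation of the Willmore functional under the conformal constraint, realising that functional as the energy of the central sphere congruence. First I would write, up to a universal constant and a topological term,
\[
\mathcal{W}(\Lambda) \;=\; \mathrm{const}\cdot\int_\Sigma (\mathcal{N}\wedge *\mathcal{N}),
\]
the energy of $S$ viewed as a map into the pseudo-Riemannian symmetric space $\mathcal{G}=\mathrm{Gr}_{(3,1)}(\R^{n+1,1})$. For a variation $\Lambda_t$ of $\Lambda$ through immersions I would record the induced infinitesimal variations of $S$, $\mathcal{D}$ and $\mathcal{N}$, differentiate under the integral, and integrate by parts; exploiting the splitting $\D=\mathcal{D}+\mathcal{N}$ and the flatness of $\D$ (so that the curvature of $\mathcal{D}$ is balanced by the quadratic $\mathcal{N}$-terms) I expect to reach a first-variation formula of the shape
\[
\delta\mathcal{W}(\dot\Lambda) \;=\; \mathrm{const}\cdot\int_\Sigma \langle \D^{\mathcal{D}} *\mathcal{N},\,\dot\Lambda\rangle ,
\]
for the natural pairing, thereby identifying $\D^{\mathcal{D}} *\mathcal{N}$ as the Willmore gradient. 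Testing against arbitrary variations recovers the classical fact that (unconstrained) Willmore surfaces are exactly those with harmonic $S$, i.e. $\D^{\mathcal{D}} *\mathcal{N}=0$.

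Next I would impose the conformal constraint. A variation $\dot\Lambda$ is infinitesimally conformal precisely when the induced variation $\dot{\mathcal{C}}_\Lambda$ of the conformal class vanishes; the map $\dot\Lambda\mapsto\dot{\mathcal{C}}_\Lambda$ is the constraint, a quadratic-differential-valued operator whose kernel is the space of admissible variations. Being constrained Willmore means that $\delta\mathcal{W}$ annihilates this kernel, which by the method of Lagrange multipliers is equivalent to $\D^{\mathcal{D}} *\mathcal{N}$ lying in the image of the adjoint constraint map. The central computation is to show that this adjoint, evaluated on a multiplier, produces exactly the bracket term $2\,[q\wedge *\mathcal{N}]$, with $q$ a real $1$-form valued in $\Lambda\wedge\Lambda^{(1)}\subset\wedge^2\underline{\R}^{n+1,1}\cong o(\R^{n+1,1})$; here the value bundle $\Lambda\wedge\Lambda^{(1)}$ encodes that the conformal multipliers are the transverse quadratic differentials, and reality of $q$ reflects that both functional and constraint are real.

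It then remains to pin down the admissible multipliers. Writing $\langle [q\wedge *\mathcal{N}],\dot\Lambda\rangle$ and integrating by parts against an infinitesimally conformal $\dot\Lambda$, I expect the closedness $\D^{\mathcal{D}} q=0$ to be precisely the condition guaranteeing that $q$ arises as a genuine Lagrange multiplier, namely the conformal-geometric avatar of the holomorphicity of the classical quadratic differential. This packages the constrained Willmore condition as the existence of a real $q\in\Omega^1(\Lambda\wedge\Lambda^{(1)})$ with $\D^{\mathcal{D}} q=0$ and $\D^{\mathcal{D}} *\mathcal{N}=2\,[q\wedge *\mathcal{N}]$, and yields the converse at the same time: given such a $q$, the identity places $\D^{\mathcal{D}} *\mathcal{N}$ in the annihilator of the conformal variations, so $\delta\mathcal{W}$ vanishes there and $\Lambda$ is constrained Willmore.

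The main obstacle is exactly the identification in the middle step: translating the abstract Lagrange-multiplier statement into the algebraic normal form ``$q$ real, $\Lambda\wedge\Lambda^{(1)}$-valued, $\mathcal{D}$-closed''. This demands a M\"obius-invariant description of the infinitesimally conformal variations and of the quadratic differentials dual to them, together with a verification that the multiplier can always be chosen to take values in $\Lambda\wedge\Lambda^{(1)}$ rather than in all of $o(\R^{n+1,1})$, and to be real. The remaining effort is the bookkeeping in the first variation --- tracking the $(1,0)$/$(0,1)$ type decomposition of $\mathcal{N}$ and the curvature cancellations forced by flatness of $\D$ --- which is delicate but routine.
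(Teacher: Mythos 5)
There is a point of reference to settle first: the paper does not prove Theorem \ref{CWtemp} at all; it is quoted from \cite{BPP}, \cite{BC}, \cite{SD}, so there is no internal argument to compare yours against. Your plan is the standard variational route taken in those references: realise the Willmore functional, up to a topological term, as the energy of the central sphere congruence $S:\Sigma\rightarrow\mathcal{G}$, compute the first variation, and handle the conformal constraint by a Lagrange multiplier. As a strategy this is the right one, and the normal form you aim at ($q$ real, valued in $\Lambda\wedge\Lambda^{(1)}$, with $\D^{\mathcal{D}}q=0$) is the correct target.

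As a proof, however, the proposal has genuine gaps, and they are exactly the steps you defer. (a) The first-variation formula $\delta\mathcal{W}(\dot\Lambda)=\mathrm{const}\cdot\int_{\Sigma}\langle \D^{\mathcal{D}}*\mathcal{N},\dot\Lambda\rangle$ is not ``delicate but routine'': $S$ depends on the $2$-jet of $\Lambda$, so $\dot S$ is a second-order operator applied to $\dot\Lambda$, and a priori criticality under variations of $\Lambda$ is \emph{weaker} than harmonicity of $S$ (which is criticality under all variations of $S$). That these coincide is the Blaschke--Thomsen--Ejiri--Rigoli computation, resting on the enveloping property of the central sphere congruence; it is a substantive lemma, not bookkeeping. (b) The Lagrange-multiplier step is infinite-dimensional: from ``$\delta\mathcal{W}$ annihilates the kernel of the constraint map'' one may conclude ``$\delta\mathcal{W}$ lies in the image of the adjoint'' only after establishing a closed-range or regularity/duality property of the constraint operator into quadratic differentials; this is a real analytic point, treated carefully in \cite{BPP}, and it is also where the non-uniqueness of multipliers for isothermic surfaces (Proposition \ref{uniqq} of the paper) originates. (c) The identification of the adjoint constraint map with $q\mapsto 2\,[q\wedge *\mathcal{N}]$, the reduction of the multiplier to a real form valued in $\Lambda\wedge\Lambda^{(1)}$ rather than all of $o(\R^{n+1,1})$, and the fact that genuine multipliers are precisely the $\D^{\mathcal{D}}$-closed ones (holomorphicity of the quadratic differential) together constitute the entire content of the theorem, and your text explicitly labels this ``the main obstacle'' without carrying it out. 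In sum, what you have written is a correct roadmap of the known proof in which every box on the map is the nontrivial step; to count as a proof, (a)--(c) must actually be executed.
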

The introduction of a constraint in the variational problem equips surfaces $\Lambda$ with \emph{Lagrange multipliers} $q$, defining pairs $(\Lambda,q)$. Willmore surfaces are
the constrained Willmore surfaces admitting the zero multiplier.

For maps into a Grassmannian, harmonicity amounts \cite{uhlenbeck} to the flatness of a  family of connections. Ultimately, and crucially, a zero-curvature characterisation of constrained Willmore surfaces follows:

\begin{theorem}\cite{BC}\label{constrharmonicity}
Given a real form $q\in\Omega^{1}(\Lambda\wedge\Lambda^{(1)})$,
$(\Lambda,q)$ is a constrained Willmore surface if and only if the connection
\begin{equation}\label{eq:dlamdaqV}
\D^{\lambda}_{q}:=\mathcal{D}+\lambda\mathcal{N}^{1,0}+\lambda^{-1}\mathcal{N}^{0,1}+(\lambda^{2}-1)q^{1,0}+(\lambda^{-2}-1)q^{0,1}
\end{equation}
is flat, for all $\lambda\in\C\backslash\{0\}$.
\end{theorem}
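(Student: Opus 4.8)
The plan is to compute the curvature of the loop of connections $\D^{\lambda}_{q}$ directly, expand it as a Laurent polynomial in $\lambda$, and read off the vanishing of its coefficients against the two conditions \eqref{eq:curlyDextderivofq}--\eqref{eq:mainCWeq} that characterise constrained Willmore surfaces in Theorem~\ref{CWtemp}. Writing $\alpha_{\lambda}:=\D^{\lambda}_{q}-\mathcal{D}$ for the $o(\Lor)$-valued $1$-form carrying the $\lambda$-dependence, the curvature is
$$R^{\lambda}=R^{\mathcal{D}}+\D^{\mathcal{D}}\alpha_{\lambda}+\tfrac{1}{2}[\alpha_{\lambda}\wedge\alpha_{\lambda}],$$
and flatness for every $\lambda\in\C\backslash\{0\}$ is equivalent to the vanishing of each coefficient of this finite Laurent polynomial.

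Two structural simplifications drive the bookkeeping. First, since $\Sigma$ is a Riemann surface the wedge of two forms of the same type vanishes, so $[\mathcal{N}^{1,0}\wedge\mathcal{N}^{1,0}]=[q^{1,0}\wedge q^{1,0}]=0$ and likewise for the $(0,1)$-parts; hence only mixed-type brackets survive and $\tfrac{1}{2}[\alpha_{\lambda}\wedge\alpha_{\lambda}]=[\alpha_{\lambda}^{1,0}\wedge\alpha_{\lambda}^{0,1}]$, with $\alpha_{\lambda}^{1,0}=\lambda\mathcal{N}^{1,0}+(\lambda^{2}-1)q^{1,0}$ and $\alpha_{\lambda}^{0,1}=\lambda^{-1}\mathcal{N}^{0,1}+(\lambda^{-2}-1)q^{0,1}$. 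Second, I would split everything according to $\underline{\R}^{n+1,1}=S\oplus S^{\perp}$: the connection $\mathcal{D}$ preserves this splitting, $\mathcal{N}$ interchanges the summands, and $q$, valued in $\Lambda\wedge\Lambda^{(1)}\subset o(\Lor)$, preserves it. Thus $R^{\lambda}$ separates into an $S$-preserving part and an interchanging part, which must vanish independently. At this point I would feed in the flatness of $\D=\mathcal{D}+\mathcal{N}$, whose two parts give $R^{\mathcal{D}}=-[\mathcal{N}^{1,0}\wedge\mathcal{N}^{0,1}]$ and $\D^{\mathcal{D}}\mathcal{N}=0$, the latter yielding $\D^{\mathcal{D}}\mathcal{N}^{1,0}=-\D^{\mathcal{D}}\mathcal{N}^{0,1}$; combined with $*\mathcal{N}^{1,0}=-i\mathcal{N}^{1,0}$, $*\mathcal{N}^{0,1}=i\mathcal{N}^{0,1}$, this lets me re-express the surviving interchanging terms through $\D^{\mathcal{D}}*\mathcal{N}$ and $[q\wedge *\mathcal{N}]$.

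Carrying this out, the interchanging part collapses to a nonzero scalar multiple of $(\lambda-\lambda^{-1})\bigl(\D^{\mathcal{D}}*\mathcal{N}-2[q\wedge *\mathcal{N}]\bigr)$, so its vanishing for all $\lambda$ is precisely \eqref{eq:mainCWeq}. For the $S$-preserving part the crucial algebraic input is that $\Lambda\wedge\Lambda^{(1)}$ is an abelian subalgebra of $o(\Lor)$: since a lift $\sigma$ of $\Lambda$ is null and orthogonal to $\Lambda^{(1)}$, the bracket formula on $\wedge^{2}\Lor$ gives $[\sigma\wedge a,\sigma\wedge b]=0$ for $a,b\in\Lambda^{(1)}$, whence $[q^{1,0}\wedge q^{0,1}]=0$. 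After the $\lambda$-independent terms cancel against $R^{\mathcal{D}}+[\mathcal{N}^{1,0}\wedge\mathcal{N}^{0,1}]=0$, the $S$-preserving curvature reduces to $(\lambda^{2}-1)\D^{\mathcal{D}}q^{1,0}+(\lambda^{-2}-1)\D^{\mathcal{D}}q^{0,1}$, so its vanishing amounts to $\D^{\mathcal{D}}q^{1,0}=0=\D^{\mathcal{D}}q^{0,1}$.

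The final, and I expect most delicate, step is to reconcile this pair of type-conditions with the single condition \eqref{eq:curlyDextderivofq}. One direction is immediate, as $\D^{\mathcal{D}}q=\D^{\mathcal{D}}q^{1,0}+\D^{\mathcal{D}}q^{0,1}$; the content is the converse, namely that for a real $q\in\Omega^{1}(\Lambda\wedge\Lambda^{(1)})$ the closedness $\D^{\mathcal{D}}q=0$ already forces the separate vanishing of the two type-parts. Here I would use that reality gives $\D^{\mathcal{D}}q^{0,1}=\overline{\D^{\mathcal{D}}q^{1,0}}$ and that the two terms lie in the conjugate line bundles $\Lambda\wedge\Lambda^{1,0}$ and $\Lambda\wedge\Lambda^{0,1}$, which meet only in the zero section, so no cancellation in $\D^{\mathcal{D}}q$ is possible unless each term vanishes. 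Checking that $\D^{\mathcal{D}}q^{1,0}$ does remain within $\Lambda\wedge\Lambda^{1,0}$ --- using $\mathcal{N}|_{\Lambda}=0$ and the precise action of $\mathcal{D}$ on this line bundle --- is the point I would treat most carefully; granting it, flatness of $\D^{\lambda}_{q}$ for all $\lambda$ is equivalent to \eqref{eq:curlyDextderivofq} together with \eqref{eq:mainCWeq}, that is, to $(\Lambda,q)$ being constrained Willmore.
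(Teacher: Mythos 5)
The paper itself contains no proof of this theorem --- it is quoted from \cite{BC} --- so your argument can only be judged on its own merits. Most of it is sound, and it is the natural route: on a surface $\tfrac12[\alpha_{\lambda}\wedge\alpha_{\lambda}]=[\alpha_{\lambda}^{1,0}\wedge\alpha_{\lambda}^{0,1}]$; the bracket computation $[\sigma\wedge a,\sigma\wedge b]=0$ for $a,b\in\Gamma(\Lambda^{(1)})$ does show $[q^{1,0}\wedge q^{0,1}]=0$; the $S$-interchanging part of the curvature is indeed a nonzero multiple of $(\lambda-\lambda^{-1})\bigl(\D^{\mathcal{D}}*\mathcal{N}-2[q\wedge*\mathcal{N}]\bigr)$ once one uses $\D^{\mathcal{D}}\mathcal{N}=0$; and the $S$-preserving part reduces, via the Gauss equation $R^{\mathcal{D}}+[\mathcal{N}^{1,0}\wedge\mathcal{N}^{0,1}]=0$, to $(\lambda^{2}-1)\D^{\mathcal{D}}q^{1,0}+(\lambda^{-2}-1)\D^{\mathcal{D}}q^{0,1}$. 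So, as you say, the theorem comes down to: for a \emph{real} $q\in\Omega^{1}(\Lambda\wedge\Lambda^{(1)})$, the single equation $\D^{\mathcal{D}}q=0$ forces the separate vanishing $\D^{\mathcal{D}}q^{1,0}=0=\D^{\mathcal{D}}q^{0,1}$.

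Your mechanism for that last step, however, is based on a false premise, and this is a genuine gap. You want $\D^{\mathcal{D}}q^{1,0}$ and $\D^{\mathcal{D}}q^{0,1}$ to lie in the complementary line bundles $\Lambda\wedge\Lambda^{1,0}$ and $\Lambda\wedge\Lambda^{0,1}$; that presupposes $q^{1,0}$ itself is valued in a single line subbundle of $(\Lambda\wedge\Lambda^{(1)})^{\C}=\Lambda\wedge\Lambda^{1,0}\oplus\Lambda\wedge\Lambda^{0,1}$, which does \emph{not} follow from the hypotheses (e.g.\ $q=\D x\otimes\sigma\wedge(\sigma_{z}+\sigma_{\bar{z}})$ is real and $\Lambda\wedge\Lambda^{(1)}$-valued, yet $q^{1,0}$ hits both summands). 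Concretely, fix a lift $\sigma$, a holomorphic coordinate $z$, let $\hat{\sigma}$ be the null section of $S$ with $(\sigma,\hat{\sigma})=-1$, $(\hat{\sigma},\sigma_{z})=(\hat{\sigma},\sigma_{\bar{z}})=0$, set $\eps:=2(\sigma_{z},\sigma_{\bar{z}})$, and write $q(\partial_{z})=\sigma\wedge(A\sigma_{z}+B\sigma_{\bar{z}})$. Using $\sigma_{z\bar{z}}\in\Gamma(S)$ (the defining property of the central sphere congruence), whence $\sigma_{z\bar{z}}\equiv\tfrac{\eps}{2}\hat{\sigma}$ modulo $\sigma$, one gets
\[
\mathcal{D}_{\bar{z}}\bigl(q(\partial_{z})\bigr)=A_{\bar{z}}\,\sigma\wedge\sigma_{z}+(B_{\bar{z}}+cB)\,\sigma\wedge\sigma_{\bar{z}}-A\,\sigma_{z}\wedge\sigma_{\bar{z}}+\tfrac{\eps}{2}A\,\sigma\wedge\hat{\sigma},
\]
with $c$ defined by $\mathcal{D}_{\bar{z}}\sigma_{\bar{z}}\equiv c\,\sigma_{\bar{z}}$ modulo $\sigma$. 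Whenever $A\neq0$, the last two terms escape $(\Lambda\wedge\Lambda^{(1)})^{\C}$ entirely, so $\D^{\mathcal{D}}q^{1,0}$ lies in no line bundle at all and the ``no cancellation'' argument cannot start. The correct argument uses precisely these escaping components of $\D^{\mathcal{D}}q=\mathcal{D}_{z}q(\partial_{\bar{z}})-\mathcal{D}_{\bar{z}}q(\partial_{z})$ (the second term being the conjugate of the first): along $\sigma_{z}\wedge\sigma_{\bar{z}}$ one reads $A+\overline{A}=0$ and along $\sigma\wedge\hat{\sigma}$ one reads $\overline{A}-A=0$, so $\D^{\mathcal{D}}q=0$ first forces $A\equiv0$, i.e.\ it forces $q^{1,0}$ into the line bundle $\Lambda\wedge\Lambda^{0,1}$ (note: $\Lambda^{0,1}$, not $\Lambda^{1,0}$, in this paper's conventions --- compare the proof of Proposition 3.9, where $(v_{\infty}^{\perp})_{z}\in\Gamma(\langle(\sigma_{\infty})_{\bar{z}}\rangle)$); only \emph{then} do the surviving terms $(B_{\bar{z}}+cB)\,\sigma\wedge\sigma_{\bar{z}}$ and its conjugate sit in complementary line bundles, and your argument closes, giving $\mathcal{D}_{\bar{z}}q(\partial_{z})=0=\mathcal{D}_{z}q(\partial_{\bar{z}})$. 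In short: the equivalence you need is true, but the containment you propose to ``grant'' is false as stated, and establishing the corrected version of it is where the actual content of the theorem lies.
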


At times, it will be convenient to make an explicit reference to the
surface $\Lambda$, writing $\D^{\lambda,q}_{\Lambda}$ (or, equivalently, $\D^{\lambda,q}_{S}$)  for $\D^{\lambda}_{q}$.

The isothermic surface condition amounts just as well to the flatness of a certain family
of connections:

\begin{proposition}\cite{BurDonPedPin09,Her03}\label{iso}
$\Lambda$ is isothermic if and only if there exists a non-zero
$1$-form $\eta\in\Omega^{1}(\Lambda\wedge\Lambda^{(1)})$ such that $\D\eta=0$
or, equivalently, such that the connection
$$\nabla^{t}:=\D+t\eta$$ is flat, for all $t\in\R$.
\end{proposition}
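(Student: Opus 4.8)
The plan is to split the statement into its algebraic and its geometric halves. The equivalence between $\D\eta=0$ and flatness of the whole pencil $\nabla^{t}$ is a short curvature computation: since $\D$ is the flat trivial connection, the curvature of $\nabla^{t}=\D+t\eta$ is $R^{\nabla^{t}}=t\,\D\eta+\tfrac{t^{2}}{2}[\eta\wedge\eta]$, and the key point is that the quadratic term vanishes. At each point $\Lambda$ is a null line, so every value of $\eta$ has the form $\sigma\wedge v$ for a fixed generator $\sigma$ of $\Lambda$ and some $v\in\Lambda^{(1)}$; differentiating $(\sigma,\sigma)=0$ gives $\sigma\perp\Lambda^{(1)}$, and the bracket in $o(\R^{n+1,1})$ of two decomposable bivectors built on a common null vector orthogonal to the remaining factors vanishes. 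Hence $[\eta\wedge\eta]=0$, so $R^{\nabla^{t}}=t\,\D\eta$ and $\nabla^{t}$ is flat for every $t\in\R$ if and only if $\D\eta=0$. This settles the ``equivalently'' clause and reduces the proposition to showing that $\Lambda$ is isothermic if and only if it carries a non-zero closed $\eta\in\Omega^{1}(\Lambda\wedge\Lambda^{(1)})$.

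For this reduction I would pass to a space-form model. Fixing $v_{\infty}\in\R^{n+1,1}_{\times}$ identifies $\Lambda$ with a conformal immersion $f$ into $S_{v_{\infty}}$ and picks out the canonical lift $\sigma$ with $(\sigma,v_{\infty})=-1$, with respect to which $\Lambda^{(1)}$ is spanned by $\sigma$ and $\D\sigma(T\Sigma)$. Writing a section of $\Lambda\wedge\Lambda^{(1)}$ as $\sigma\wedge\xi$ with $\xi\in\Omega^{1}(\Lambda^{(1)})$ and separating types in $\D\eta=0$, the condition becomes the integrability condition for a Christoffel (dual) surface $f^{*}$, whose differential $\D f^{*}$ is encoded by $\xi$. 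By Christoffel's theorem such a dual exists precisely when $f$ admits conformal curvature-line coordinates, i.e.\ is isothermic. In the forward direction one writes $\eta$ explicitly from $\D f^{*}$ in curvature-line coordinates and checks, using the Codazzi equations, both that it lands in $\Lambda\wedge\Lambda^{(1)}$ and that it is closed; in the backward direction one recovers the curvature-line net from the type decomposition of the given $\eta$.

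The main obstacle is exactly this last translation: performing the type decomposition of $\D\eta=0$ in a M\"{o}bius-invariant, lift-independent fashion and matching it term by term with the classical curvature-line (Codazzi) condition, while verifying independence of the choices of $\sigma$ and $v_{\infty}$. A secondary difficulty is the behaviour at umbilic points, where curvature-line coordinates degenerate; there one argues by continuity from the open dense non-umbilic locus, using that $\eta$ and the conformal invariants extend smoothly. Since the statement is quoted from \cite{BurDonPedPin09,Her03}, I would ultimately phrase the computation within their M\"{o}bius-frame formalism rather than rederive the classical equivalence in full.
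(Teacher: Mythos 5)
You cannot be compared against the paper's own proof here, because there is none: the proposition is stated as a quoted result from \cite{BurDonPedPin09,Her03} and the paper never proves it, so the only meaningful comparison is with the cited literature. Judged on its own terms, your proposal splits into two parts of very different status. The curvature computation is correct and complete: since $\Lambda$ is null and, by differentiating $(\sigma,\sigma)=0$, orthogonal to $\Lambda^{(1)}$, every value of $\eta$ has the form $\sigma\wedge v$ with $(\sigma,\sigma)=(\sigma,v)=0$, and the identity
$$[a\wedge b,c\wedge d]=(a,c)\,b\wedge d-(b,c)\,a\wedge d-(a,d)\,b\wedge c+(b,d)\,a\wedge c$$
kills $[\eta\wedge\eta]$ term by term (each coefficient is one of $(\sigma,\sigma)$, $(\sigma,v)$, $(u,\sigma)$, or the surviving term is a multiple of $\sigma\wedge\sigma$). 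Hence $R^{\nabla^{t}}=t\,\D\eta$, and flatness of the whole pencil is equivalent to $\D\eta=0$. This is exactly the standard argument --- the fibres of $\Lambda\wedge\Lambda^{(1)}$ are abelian subalgebras of $o(\R^{n+1,1})$ --- and it fully disposes of the ``equivalently'' clause.

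The geometric half, by contrast, is a plan rather than a proof. The Christoffel-duality route you outline is indeed how the cited sources argue (cf.\ \cite{Her03}, \cite{Bur06}, \cite{BurDonPedPin09}), but the substantive content --- the two-way dictionary between closedness of $\eta$ and the curvature-line/Codazzi condition, including independence of the choices of $\sigma$ and $v_{\infty}$ --- is precisely what you defer to those references, so as a blind proof it remains open. There is also one concrete misstep: handling umbilics ``by continuity from the open dense non-umbilic locus'' fails, because the non-umbilic locus need not be dense --- a totally umbilic patch is isothermic and admits (many) closed forms $\eta$, which is exactly why the remark following the proposition in this paper restricts uniqueness of $\eta$ up to scale to surfaces not contained in a $2$-sphere. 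That case needs its own direct (easy) argument, not a limiting one. Given the paper's standing convention of local restriction to suitable open sets, both defects are repairable, but as written your second half establishes neither direction of the equivalence.
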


In the conditions of Proposition \ref{iso}, we will say that $(\Lambda,\eta)$ is an isothermic surface. In the case
$\Lambda$ is not contained in any $2$-sphere, the form $\eta$ is
unique up to a (non-zero) constant real scale, cf. \cite{Bur06}. As
for a given constrained Willmore surface, the multiplier $q$ is, in general, unique, the exception being when $\Lambda$ is, in addition, isothermic:

\begin{proposition}\cite{BQ}\label{uniqq}
A constrained Willmore surface admits a unique multiplier if and only if it is not an
isothermic surface. Furthermore: if $(\Lambda,\eta)$ is an
isothermic constrained Willmore surface admitting a multiplier $q$, then the set of
multipliers to $\Lambda$ is the affine space $q+\langle
*\eta\rangle_{\R}$.
\end{proposition}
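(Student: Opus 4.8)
The plan is to prove Proposition~\ref{uniqq} by analysing when two multipliers $q_1, q_2$ can exist for the same constrained Willmore surface $\Lambda$. Suppose $(\Lambda, q_1)$ and $(\Lambda, q_2)$ are both constrained Willmore pairs. Setting $\tau := q_2 - q_1 \in \Omega^1(\Lambda\wedge\Lambda^{(1)})$, I would subtract the two instances of the defining equations from Theorem~\ref{CWtemp}. Equation~\eqref{eq:curlyDextderivofq} is linear in $q$, so it immediately gives $\D^{\mathcal{D}}\tau = 0$; and since the central sphere congruence $S$, and hence $\mathcal{D}$ and $\mathcal{N}$, depend only on $\Lambda$ and not on the choice of multiplier, equation~\eqref{eq:mainCWeq} is also linear in $q$, yielding $[\tau \wedge *\mathcal{N}] = 0$. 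Thus $q$ is unique precisely when the only real form $\tau\in\Omega^1(\Lambda\wedge\Lambda^{(1)})$ satisfying both $\D^{\mathcal{D}}\tau = 0$ and $[\tau\wedge *\mathcal{N}] = 0$ is $\tau = 0$.

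Next I would show that the difference form $\tau$ is governed by the isothermic condition. The key is to recognise that $\D^{\mathcal{D}}\tau = 0$ together with the algebraic constraint $[\tau\wedge *\mathcal{N}]=0$ forces $\tau$ to be (a constant real multiple of) $*\eta$ for an isothermic structure $\eta$, and conversely that $*\eta$ is always such a difference. Concretely, I would verify that if $(\Lambda,\eta)$ is isothermic, so $\D\eta = 0$ with $\eta\in\Omega^1(\Lambda\wedge\Lambda^{(1)})$, then $*\eta$ satisfies $\D^{\mathcal{D}}(*\eta)=0$ and $[*\eta\wedge *\mathcal{N}]=0$, so that $q + c\,{*\eta}$ is again a multiplier for every $c\in\R$. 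This uses the flatness condition $\D\eta=0$ decomposed against $S\oplus S^\perp$, the fact that $\eta$ (and hence $*\eta$) takes values in $\Lambda\wedge\Lambda^{(1)}\subseteq \wedge^2 S$, and the type considerations coming from the complex structure (the Hodge star on $1$-forms acting as $\pm i$ on $(1,0)$- and $(0,1)$-parts). This establishes the inclusion $q + \langle *\eta\rangle_\R \subseteq \{\text{multipliers}\}$ and shows that a non-isothermic surface, having no such $\eta$, admits at most one multiplier.

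For the reverse inclusion and the characterisation, I would argue that any admissible difference $\tau$ produces an isothermic structure. Given $\tau\neq 0$ with $\D^{\mathcal{D}}\tau=0$ and $[\tau\wedge*\mathcal{N}]=0$, the task is to upgrade $\D^{\mathcal{D}}\tau = 0$ to full flatness $\D(*\tau)=0$ by showing that the off-diagonal $\mathcal{N}$-contributions to $\D\tau$ vanish; the condition $[\tau\wedge*\mathcal{N}]=0$ is exactly what controls these cross terms. Then $\eta := {*\tau}$ (up to scale) satisfies $\D\eta=0$, exhibiting $\Lambda$ as isothermic with this $\eta$, and by the uniqueness of $\eta$ up to real scale from the remark after Proposition~\ref{iso}, every multiplier difference lies in $\langle *\eta\rangle_\R$. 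Combining the two inclusions gives that the multiplier set is exactly $q + \langle *\eta\rangle_\R$ in the isothermic case, and a single point otherwise.

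The main obstacle I anticipate is the passage between the partial flatness statement $\D^{\mathcal{D}}\tau=0$ and the genuine flatness $\D(*\tau)=0$ of the isothermic form, i.e.\ controlling precisely the $S$-versus-$S^\perp$ and the $(1,0)$-versus-$(0,1)$ components so that the algebraic bracket condition $[\tau\wedge*\mathcal{N}]=0$ delivers the vanishing of all the cross terms. This is where the geometry of $\Lambda\wedge\Lambda^{(1)}$, the Hodge star, and the bundle decomposition $\underline{\R}^{n+1,1}=S\oplus S^\perp$ must be used carefully, and it is the step most likely to require the detailed structure theory referenced in~\cite{BQ,Bur06} rather than a formal manipulation.
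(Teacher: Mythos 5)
Since the paper quotes this proposition from \cite{BQ} without proof, I am measuring your proposal against what a complete argument (essentially that of \cite{BQ}) requires. Your overall architecture is right: the difference $\tau=q_2-q_1$ of two multipliers satisfies $\D^{\mathcal{D}}\tau=0$ and $[\tau\wedge*\mathcal{N}]=0$; by type considerations ($(2,0)$- and $(0,2)$-forms vanish on a surface, and $[\alpha\wedge\beta]=[\beta\wedge\alpha]$ for $1$-forms) one has $[\mathcal{N}\wedge{*\tau}]=-[\tau\wedge*\mathcal{N}]$, so your bracket condition is precisely the vanishing of the off-diagonal, $S\wedge S^{\perp}$-valued, part of $\D(*\tau)$; and the appeal to uniqueness of $\eta$ up to real scale is the correct way to get the affine structure. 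The genuine gap is in the diagonal part: flatness $\D(*\tau)=0$ also requires $\D^{\mathcal{D}}(*\tau)=0$, and this is \emph{not} delivered by the bracket condition, contrary to your claim that ``$[\tau\wedge*\mathcal{N}]=0$ is exactly what controls these cross terms''. The bracket condition controls only the $S\wedge S^{\perp}$ component and says nothing about $\D^{\mathcal{D}}(*\tau)$. What you actually have is $\D^{\mathcal{D}}\tau=0$, and the two conditions are genuinely different: writing $\tau=\tau^{1,0}+\tau^{0,1}$, the first says $\D^{\mathcal{D}}\tau^{1,0}=-\D^{\mathcal{D}}\tau^{0,1}$, whereas the second says $\D^{\mathcal{D}}\tau^{1,0}=+\D^{\mathcal{D}}\tau^{0,1}$. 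The same gap sits in your forward direction: to see that $q+c\,{*\eta}$ is again a multiplier you need $\D^{\mathcal{D}}(*\eta)=0$, which is not a formal consequence of $\D\eta=0$ and the Hodge-star conventions.

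What closes the gap is a concrete lemma, and it is where the real content of the proposition lies: for a \emph{real} $1$-form $\tau\in\Omega^{1}(\Lambda\wedge\Lambda^{(1)})$, $\D^{\mathcal{D}}\tau=0$ forces $\D^{\mathcal{D}}\tau^{1,0}=0=\D^{\mathcal{D}}\tau^{0,1}$, equivalently $\D^{\mathcal{D}}(*\tau)=0$. Its proof uses the geometry of the central sphere congruence rather than formal type-juggling: writing $\tau^{1,0}=(a\,\sigma\wedge\sigma_{z}+b\,\sigma\wedge\sigma_{\bar z})\,\D z$ with $\sigma$ a lift of $\Lambda$, and using $\sigma_{z\bar z}\in\Gamma(S)$ (the defining property of $S$), the components of $\D^{\mathcal{D}}\tau$ along $\sigma_{z}\wedge\sigma_{\bar z}$ and $\sigma\wedge\sigma_{z\bar z}$ are, up to a common factor, $a+\bar a$ and $\bar a-a$; reality of $\tau$ and $\D^{\mathcal{D}}\tau=0$ therefore force $a=0$, i.e.\ $\tau^{1,0}\in\Omega^{1,0}(\Lambda\wedge\Lambda^{0,1})$, after which $\D^{\mathcal{D}}\tau^{1,0}=0$ follows from the remaining components. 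Alternatively, the cheapest repair within this paper is to characterise multipliers by Theorem \ref{constrharmonicity} instead of Theorem \ref{CWtemp}: expanding the curvature of $\D^{\lambda}_{q}$ in powers of $\lambda$ (only mixed-type wedges survive, and $\Lambda\wedge\Lambda^{(1)}$ is an abelian subalgebra), the coefficients of $\lambda^{\pm 2}$ are $\D^{\mathcal{D}}q^{1,0}$ and $\D^{\mathcal{D}}q^{0,1}$, so flatness for all $\lambda$ makes these vanish \emph{separately}; subtracting the resulting identities for $q_{1}$ and $q_{2}$ yields $\D^{\mathcal{D}}\tau^{1,0}=\D^{\mathcal{D}}\tau^{0,1}=0$ together with your bracket identity, whence $\D(*\tau)=0$ at once, and the rest of your argument (both inclusions and the affine structure) goes through as you describe.
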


\subsection{Transformations of constrained Willmore surfaces}

Constrained Willmore surfaces come equipped with a family of flat
connections. Transformations of this family have been exploited
\cite{BC,SD,BQ} in order to produce new
constrained Willmore surfaces, as we recall next.

Suppose $(\Lambda,q)$ be a constrained Willmore surface.
\subsubsection{Spectral deformation}
For each $\lambda$ in $S^{1}$, $\D^{\lambda}_{q}$ is a real flat
metric connection, which establishes the existence of an isometry
$\phi^{\lambda}_{q}:(\underline{\mathbb R}^{n+1,1},
\D^{\lambda}_{q})\rightarrow(\underline{\mathbb R}^{n+1,1},\D)$ of
bundles, defined on a simply connected component of $\Sigma$,
preserving connections, unique up to a M\"{o}bius transformation. We
define a spectral deformation of $\Lambda$ into new constrained Willmore surfaces by
setting, for each $\lambda$ in $S^{1}$,
$$\Lambda^{\lambda}_{q}:=\phi^{\lambda}_{q}\Lambda,$$ cf.
\cite{BC,SD,BQ}.

\subsubsection{B\"{a}cklund transformation}

In \cite{BQ}, a version of the Terng-Uhlenbeck dressing action \cite{TU} is used to construct new constrained Willmore surfaces from a given one, as follows. Let $\rho\in\Gamma(\underline{\mathbb R}^{n+1,1})$ be reflection
across $S$, $\rho=\pi_{S}-\pi_{S^{\perp}}$, for $\pi_{S}$ and
$\pi_{S^{\perp}}$ the orthogonal projections of $\underline{\mathbb R}^{n+1,1}$ onto $S$ and $S^{\perp}$,
respectively. Let $\alpha\in\C\backslash S^{1}$ be non-zero and $L\subset\underline{\C}^{n+2}$ be a null line bundle such that $L$ and $\rho L$ are never orthogonal. Define transformations $p^{(-)}_{\alpha, L}(\lambda)\in\Gamma(O(\underline\C^{n+2}))$, for $\lambda\in\C\backslash\{\pm\alpha\}$, by\footnote{In \cite{BQ}, an extra factor is introduced in the eigenvalues of $p_{\alpha,L}(\lambda)$, resulting in the normalization of the family $\lambda\mapsto p_{\alpha,L}(\lambda)$, $p_{\alpha,L}(1)=I$.}
\begin{equation*}
p^{(-)}_{\alpha,L}(\lambda)=
\begin{cases}
(-)\frac{\lambda-\alpha}{\lambda+\alpha}&\text{on $L$;}\\
1&\text{on $(L\oplus \rho L)^{\perp}$;}\\
(-)\frac{\lambda+\alpha}{\lambda-\alpha}&\text{on $\rho L$;}
\end{cases}
\end{equation*}
respectively. We define in this way two maps of $\C\backslash\{\pm\alpha\}$ into $\Gamma(O(\underline\C^{n+2}))$ that extend holomorphically to the Riemann sphere except $\pm\alpha$, by setting $p_{\alpha,L}(\infty):=I$ and
\begin{equation*}
p^{-}_{\alpha,L}(\infty)=
\begin{cases}
-1&\text{on $L$;}\\
1&\text{on $(L\oplus \rho L)^{\perp}$;}\\
-1&\text{on $\rho L$.}
\end{cases}
\end{equation*}
It will be useful to note that $p_{\alpha,L}(\lambda)=p^{-}_{\alpha^{-1},L}(\lambda^{-1})$, for all $\lambda\in\C\backslash\{0,\pm\alpha\}$. For further reference, observe also that
\begin{equation}\label{eq:rhopq}
p^{(-)}_{\alpha,L}(-\lambda)=\rho p^{(-)}_{\alpha,L}(\lambda)\rho^{-1},
\end{equation}
respectively, for all $\lambda\in\mathbb{P}^{1}\backslash\{\pm\alpha\}$. Set $\hat{\alpha}:=\overline{\alpha}\,^{-1}$, $\tilde{L}=p_{\hat{\alpha},\overline{L}}(\alpha)L$ and $r=p^{-}_{\alpha,\tilde{L}}p_{\hat{\alpha},\overline{L}}$. Consider the transform $$\hat{S}:=r(1)^{-1}S$$ of $S$ and the transforms $$\hat{\Lambda}^{1,0}:=r(1)^{-1}r(0)\Lambda^{1,0},
\,\,\,\,\hat{\Lambda}^{0,1}:=r(1)^{-1}r(\infty)\Lambda^{0,1},$$of $\Lambda^{1,0}$ and $\Lambda^{0,1}$, respectively, by $r$. Set $\hat{\Lambda}:=\hat{\Lambda}^{1,0}\cap\hat{\Lambda}^{0,1}$.

\begin{theorem}\cite{BQ}\label{BT}
$\hat{\Lambda}$ is a surface with central sphere congruence $\hat{S}$. Define, furthermore, $\tilde{q}\in\Omega^{1}(\wedge^{2}S\oplus\wedge^{2}S^{\perp})$ by
$$\tilde{q}^{1,0}:=r(\infty)q^{1,0}r(\infty)^{-1},\,\,\,\,\tilde{q}^{0,1}:=r(0)q^{0,1}r(0)^{-1}$$ and set $$\hat{q}:=r(1)^{-1}\tilde{q}r(1).$$ Then $(\hat{\Lambda},\hat{q})$ is a constrained Willmore surface.
\end{theorem}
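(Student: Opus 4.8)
The plan is to realise the entire transformation as a $\lambda$-dependent gauge (dressing) action on the associated family of flat connections, and then to invoke Theorem \ref{constrharmonicity}. Concretely, set $g(\lambda):=r(1)^{-1}r(\lambda)$, so that $g(1)=I$, and consider the gauged family
\[
\hat{\D}^{\lambda}:=g(\lambda)\,\D^{\lambda}_{q}\,g(\lambda)^{-1}.
\]
For each fixed $\lambda\in\C\backslash\{0,\pm\alpha,\pm\hat{\alpha}\}$, $g(\lambda)$ is a bundle automorphism, so flatness of $\hat{\D}^{\lambda}$ follows from that of $\D^{\lambda}_{q}$ (Theorem \ref{constrharmonicity}), gauge transformations preserving flatness. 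Since $\D^{1}_{q}=\mathcal{D}+\mathcal{N}=\D$ and $g(1)=I$, we have $\hat{\D}^{1}=\D$. The strategy is then to prove that $\hat{\D}^{\lambda}$ is, in fact, the associated family $\D^{\lambda}_{\hat{q}}$ of the data $(\hat{\Lambda},\hat{q},\hat{S})$, after which Theorem \ref{constrharmonicity} yields the conclusion; the bulk of the work is to verify that $\hat{\D}^{\lambda}$ has the required holomorphicity, reality and normal form.

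First I would record the two symmetries that drive the argument. From the decomposition \eqref{eq:dlamdaqV}, the facts that $\mathcal{D}$ preserves and $\mathcal{N}$ exchanges $S$ and $S^{\perp}$ (so $\rho\mathcal{N}\rho=-\mathcal{N}$) while $q$ is $\wedge^{2}S$-valued (so $\rho q\rho=q$) yield the twisting relation $\rho\,\D^{\lambda}_{q}\,\rho=\D^{-\lambda}_{q}$; and the reality of $\mathcal{D},\mathcal{N},q$ together with complex conjugation $c$ interchanging $(1,0)$- and $(0,1)$-types yields $c\,\D^{\lambda}_{q}\,c=\D^{\bar{\lambda}^{-1}}_{q}$. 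The dressing matrix is built to match both: \eqref{eq:rhopq} gives $\rho\,r(-\lambda)\,\rho^{-1}=r(\lambda)$, and the choices $\hat{\alpha}=\bar{\alpha}^{-1}$, $\tilde{L}=p_{\hat{\alpha},\overline{L}}(\alpha)L$, together with the relation $p_{\alpha,L}(\lambda)=p^{-}_{\alpha^{-1},L}(\lambda^{-1})$, encode a reality relation for $r$ under $\lambda\mapsto\bar{\lambda}^{-1}$. I would then check that $\hat{\D}^{\lambda}$ inherits both symmetries; in particular the new family is again real, so it will define a genuine surface in $\mathbb{P}(\mathcal{L})$ rather than merely a complex object.

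The heart of the proof is the removability of the apparent singularities. A priori $\hat{\D}^{\lambda}$ has simple poles at $\lambda=\pm\alpha,\pm\hat{\alpha}$, coming from the poles of $r$; I would show that all four residues vanish. The factor $p^{-}_{\alpha,\tilde{L}}$ has its pole-eigenline along $\rho\tilde{L}$ at $\lambda=\alpha$, and a direct residue computation reduces the vanishing of the pole of $\hat{\D}^{\lambda}$ there to the invariance of that eigenline under $\D^{\alpha}_{q}$, which is exactly what the prescription $\tilde{L}=p_{\hat{\alpha},\overline{L}}(\alpha)L$, together with the dressing construction of \cite{BQ,TU}, is designed to guarantee. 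The twisting relation then transports regularity from $\alpha$ to $-\alpha$, and the reality relation from $\pm\alpha$ to $\pm\hat{\alpha}$. Once the poles are killed, $\hat{\D}^{\lambda}$ is holomorphic on $\C\backslash\{0\}$; comparing its Laurent behaviour at $0$ and $\infty$ with that of $\D^{\lambda}_{q}$ (using that $r(0),r(\infty)$ are finite and invertible) shows it has at most the same orders there, i.e. it takes the normal form $\hat{\mathcal{D}}+\lambda\hat{\mathcal{N}}^{1,0}+\lambda^{-1}\hat{\mathcal{N}}^{0,1}+(\lambda^{2}-1)\hat{q}^{1,0}+(\lambda^{-2}-1)\hat{q}^{0,1}$. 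Reading off the leading coefficients at $\infty$ and $0$ recovers precisely $\hat{q}^{1,0}=r(1)^{-1}r(\infty)q^{1,0}r(\infty)^{-1}r(1)$ and $\hat{q}^{0,1}=r(1)^{-1}r(0)q^{0,1}r(0)^{-1}r(1)$, matching the stated $\hat{q}=r(1)^{-1}\tilde{q}r(1)$; reality of the family forces $\hat{q}$ to be a real form, and conjugation by $r(1),r(0),r(\infty)$ carries $\Lambda\wedge\Lambda^{(1)}$-valued forms to $\hat{\Lambda}\wedge\hat{\Lambda}^{(1)}$-valued ones.

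It remains to identify the geometric data. Tracking how the twisting passes to the gauged family, \eqref{eq:rhopq} yields $\hat{\rho}\,\hat{\D}^{-\lambda}\,\hat{\rho}=\hat{\D}^{\lambda}$ with $\hat{\rho}=r(1)^{-1}\rho\,r(1)$ the reflection across $\hat{S}=r(1)^{-1}S$, so that the decomposition $\underline{\R}^{n+1,1}=\hat{S}\oplus\hat{S}^{\perp}$ underlying the normal form of $\hat{\D}^{\lambda}$ is precisely the one determined by $\hat{S}$. Matching the $\hat{\mathcal{N}}$-part then forces $\hat{\Lambda}^{1,0}=r(1)^{-1}r(0)\Lambda^{1,0}$ and $\hat{\Lambda}^{0,1}=r(1)^{-1}r(\infty)\Lambda^{0,1}$ to be the holomorphic and antiholomorphic refinements of the transformed surface, whence $\hat{\Lambda}=\hat{\Lambda}^{1,0}\cap\hat{\Lambda}^{0,1}$ is a null line subbundle and, once the non-degeneracy of $\hat{\mathcal{N}}$ is checked, an immersion whose central sphere congruence is $\hat{S}=\hat{\Lambda}^{(1)}\oplus\langle\triangle\hat{\sigma}\rangle$. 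With $\hat{S}$ so identified and $\hat{\D}^{\lambda}=\D^{\lambda}_{\hat{q}}$ established, Theorem \ref{constrharmonicity} completes the proof. I expect the residue analysis at $\lambda=\alpha$ — showing that the precise choice of $\tilde{L}$ renders the pole removable — to be the main obstacle, with the passage from the algebraic normal form back to the surface geometry (confirming $\hat{S}$ is the genuine central sphere congruence of $\hat{\Lambda}$) the second most delicate point.
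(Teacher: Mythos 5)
Note first that this paper contains no proof of Theorem \ref{BT}: the theorem is quoted from \cite{BQ}, so your proposal can only be measured against the dressing argument given there. In outline you reproduce that argument faithfully: gauge the family $\D^{\lambda}_{q}$ by $r(1)^{-1}r(\lambda)$, show the singularities at $\pm\alpha,\pm\hat{\alpha}$ are removable, use the twisting and reality symmetries (your identities $\rho\,\D^{\lambda}_{q}\,\rho=\D^{-\lambda}_{q}$ and $c\,\D^{\lambda}_{q}\,c=\D^{\overline{\lambda}^{-1}}_{q}$ are correct, and do transport regularity from $\alpha$ to $-\alpha$ and from $\pm\alpha$ to $\pm\hat{\alpha}$), read off the normal form \eqref{eq:dlamdaqV} relative to $\hat{S}$, and close with Theorem \ref{constrharmonicity}. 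Your identification of the coefficients, giving $\hat{q}^{1,0}=r(1)^{-1}r(\infty)q^{1,0}r(\infty)^{-1}r(1)$ and its conjugate, is also consistent with the statement.

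The genuine gap sits exactly where you locate the heart of the proof, and it is not merely a deferred computation. Removability of the poles is \emph{not} a consequence of the prescription $\tilde{L}=p_{\hat{\alpha},\overline{L}}(\alpha)L$: it requires the additional hypothesis that $L$ be $\D^{\alpha}_{q}$-parallel (equivalently, by the reality symmetry, that $\overline{L}$ be $\D^{\hat{\alpha}}_{q}$-parallel). This hypothesis is part of what ``B\"{a}cklund transformation parameters'' means in \cite{BQ}; it is suppressed in this paper's recollection of the construction in Section 2.3.2, and for a null line bundle $L$ subject only to the stated condition ($L$ and $\rho L$ never orthogonal) the gauged family has honest poles and the theorem fails. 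Given that hypothesis, the role of $\tilde{L}$ is to \emph{transport} the parallelism of $L$ through the first factor: $\tilde{L}$ (hence $\rho\tilde{L}$) is parallel at $\lambda=\alpha$ for the once-gauged connection $p_{\hat{\alpha},\overline{L}}(\lambda)\circ\D^{\lambda}_{q}\circ p_{\hat{\alpha},\overline{L}}(\lambda)^{-1}$, not for $\D^{\alpha}_{q}$ itself; the prescription cannot manufacture parallelism, as your phrase ``exactly what the prescription \dots is designed to guarantee'' asserts. Moreover, the step you then invoke --- that a parallel pole-eigenline makes the gauged connection extend holomorphically with the correct Laurent orders at $0,\infty$ (the simple-factor analysis in which Lemma \ref{TrioDeHolomorfia}-type arguments do the work) --- is precisely the content of the dressing construction of \cite{BQ,TU} to which you appeal, and Theorem \ref{BT} is that construction's main output, so the appeal is circular. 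The remaining geometric identifications (that $\hat{\Lambda}$ is an immersion and $\hat{S}$ its central sphere congruence) are correctly flagged but likewise only sketched; as it stands, your argument is a correct road map of the proof in \cite{BQ} that assumes its two hardest ingredients.
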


$(\hat{\Lambda},\hat{q})$ is said to be the \emph{B\"{a}cklund transform of $(\Lambda,q)$ of parameters $\alpha, L$}.\footnote{We are, in fact, considering the reparametrization of the B\"{a}cklund transformation presented in \cite{BQ} that results of interchanging parameters $\alpha, L$ with parameters $\hat{\alpha},\overline{L}$.}

\section{Special constrained Willmore surfaces of type $d$} \label{specialsection}

In \cite{BS}, the concept of special isothermic surface of type $d$ is introduced, as an isothermic surface admitting a polynomial conserved quantity of degree $d$, having the classical notion of special isothermic surface as the particular case of $d=2$. The focus of
this article will be the study of \emph{special constrained Willmore surfaces}
and, in particular, that of a polynomial conserved quantity in the
constrained Willmore context.

Let $\Lambda$ be a surface in $S^{n}\cong\mathbb{P}(\mathcal{L})$.

\subsection{Polynomial conserved quantities for constrained Willmore surfaces}
Suppose $(\Lambda,q)$ be a constrained Willmore surface, with its
associated family of flat connections $\D^{\lambda}_{q}$, indexed by
$\C\backslash\{0\}$.

\begin{definition}\label{definition} Let $p(\lambda)=\sum_{k=-d}^d p_{k}\lambda^{k}$ be a Laurent
polynomial with coefficients in
$\Gamma(\underline{\mathbb{C}}^{n+2})$, with $d\in\mathbb{N}_{0}$,
such that:
\begin{subequations}\label{polynomial}
\begin{gather}
\label{eq:1}
p_{-k}=\overline{p_{k}}\mbox{, for all } k;\\
\label{eq:2}
p_{d}\in\Gamma(S^{\perp});\\
\label{eq:3} p_{k}\in\Gamma(S^{\perp})\mbox{ if }k\mbox{ and }d\mbox{ have the
same parity;
otherwise }p_{k}\in\Gamma(S);\\
\label{eq:4} p(1)\neq0.
\end{gather}
\end{subequations}
We say that $p(\lambda)$ is a \emph{polynomial conserved quantity of
$(\Lambda,q)$ of type $d$} if
\begin{equation}\label{eq:pcq}
\D^{\lambda}_{q}p(\lambda)=0,
\end{equation}
for all $\lambda\in\mathbb{C}\backslash\{0\}$. We say that $(\Lambda,q)$ is a \emph{special constrained Willmore surface of type $d$}
if it admits a polynomial conserved quantity of type $d$.
\end{definition}

Constrained Willmore surfaces in space-forms constitute a conformally-invariant class of
surfaces and so does the class of special constrained Willmore surfaces of type $d$,
given $d\in\mathbb{N}_{0}$:
\begin{proposition}
Let $d$ be in $\mathbb{N}_{0}$ and $T$ be in
$\Ort(\mathbb{R}^{n+1,1})$. Suppose $(\Lambda,q)$ is a constrained Willmore surface of
type $d$. Then so is $(T\Lambda,Ad_{T}(q))$.
\begin{proof}
The fact that $\mathcal{D}_{T\Lambda}=T\circ
\mathcal{D}_{\Lambda}\circ T^{-1}\mbox{ and
}\mathcal{N}_{T\Lambda}=T\circ \mathcal{N}_{\Lambda}\circ T^{-1}$
makes it clear that $(T\Lambda,Ad_{T}(q))$ is a constrained Willmore surface and,
furthermore, that, if $p(\lambda)=\sum_{k=-d}^d p_{k}\lambda^{k}$ is
a polynomial conserved quantity of $(\Lambda,q)$ of type $d$, then
$$s(\lambda):=T\circ p(\lambda)=\sum_{k=-d}^d T(p_{k})\lambda^{k}$$ is
a polynomial conserved quantity of $(T\Lambda,Ad_{T}(q))$ of type
$d$.
\end{proof}
\end{proposition}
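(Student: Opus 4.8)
The plan is to produce a polynomial conserved quantity for $(T\Lambda,\Ad_{T}(q))$ by transporting the given one through $T$, and then to check that the conditions of Definition \ref{definition} survive. If $p(\lambda)=\sum_{k=-d}^{d}p_{k}\lambda^{k}$ is a polynomial conserved quantity of $(\Lambda,q)$ of type $d$, the natural candidate is
$$s(\lambda):=T\circ p(\lambda)=\sum_{k=-d}^{d}T(p_{k})\,\lambda^{k}.$$
The whole argument rests on $T$ being a constant, and hence $\D$-parallel, gauge transformation of the trivial bundle, so that it intertwines the data attached to $\Lambda$ with the corresponding data attached to $T\Lambda$.

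First I would record the relevant equivariance. Since $T\in\Ort(\mathbb{R}^{n+1,1})$ preserves the light-cone and induces a conformal, hence complex-structure preserving, diffeomorphism, it carries the central sphere congruence of $\Lambda$ to that of $T\Lambda$: thus $S_{T\Lambda}=T(S_{\Lambda})$ and $S^{\perp}_{T\Lambda}=T(S^{\perp}_{\Lambda})$, whence $\mathcal{D}_{T\Lambda}=T\circ\mathcal{D}_{\Lambda}\circ T^{-1}$ and $\mathcal{N}_{T\Lambda}=T\circ\mathcal{N}_{\Lambda}\circ T^{-1}$, with the analogous identities for the $(1,0)$- and $(0,1)$-parts. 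Orthogonality of $T$ gives $\Ad_{T}(u\wedge v)=Tu\wedge Tv$, so that $\Ad_{T}(q)$ is again a real form in $\Omega^{1}((T\Lambda)\wedge(T\Lambda)^{(1)})$; together with the intertwining of $\mathcal{D}$ and $\mathcal{N}$ this identifies $(T\Lambda,\Ad_{T}(q))$ as a constrained Willmore surface. Substituting these identities into \eqref{eq:dlamdaqV} yields the key relation
$$\D^{\lambda}_{\Ad_{T}(q)}=T\circ\D^{\lambda}_{q}\circ T^{-1},\qquad\lambda\in\C\backslash\{0\}.$$

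The conserved-quantity property then follows at once: by \eqref{eq:pcq},
$$\D^{\lambda}_{\Ad_{T}(q)}\,s(\lambda)=T\bigl(\D^{\lambda}_{q}\,p(\lambda)\bigr)=0.$$
It remains to confirm the type-$d$ conditions for $s(\lambda)$. Conditions \eqref{eq:2} and \eqref{eq:3} are immediate from $S_{T\Lambda}=T(S_{\Lambda})$ and $S^{\perp}_{T\Lambda}=T(S^{\perp}_{\Lambda})$, since $T(p_{k})$ lies in the image under $T$ of whichever subbundle contains $p_{k}$, and \eqref{eq:4} from the invertibility of $T$, as $s(1)=T(p(1))\neq0$.

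The one place where the specific hypotheses on $T$ are genuinely needed, and which I would treat most carefully, is the reality condition \eqref{eq:1}: here $T$ must be \emph{real}, so as to commute with complex conjugation on $\underline{\mathbb{C}}^{n+2}$, giving $s_{-k}=T(p_{-k})=T(\overline{p_{k}})=\overline{T(p_{k})}=\overline{s_{k}}$. Beyond this, I anticipate no real obstacle; the statement is essentially a book-keeping exercise in the $T$-equivariance of the central sphere congruence and of the associated family of flat connections, the substantive input being the equivariance identities for $\mathcal{D}$ and $\mathcal{N}$, which one should confirm are compatible with the complex structure.
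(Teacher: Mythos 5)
Your proposal is correct and takes essentially the same route as the paper: the same candidate $s(\lambda)=T\circ p(\lambda)$, justified by the equivariance identities $\mathcal{D}_{T\Lambda}=T\circ \mathcal{D}_{\Lambda}\circ T^{-1}$ and $\mathcal{N}_{T\Lambda}=T\circ \mathcal{N}_{\Lambda}\circ T^{-1}$, which give $\D^{\lambda}_{\Ad_{T}(q)}=T\circ \D^{\lambda}_{q}\circ T^{-1}$. The paper simply states these identities and declares the conclusion clear, while you spell out the book-keeping they leave implicit (reality of $T$ for condition \eqref{eq:1}, $S_{T\Lambda}=T(S_{\Lambda})$ for \eqref{eq:2}--\eqref{eq:3}, and invertibility of $T$ for \eqref{eq:4}).
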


\begin{proposition}\label{iff condition} A Laurent polynomial $p(\lambda)=\sum_{k=-d}^d p_{k}\lambda^{k}$ satisfying
the conditions \eqref{polynomial} is a polynomial conserved quantity
of $(\Lambda,q)$ if and only if
\begin{equation}\label{iff condition-equation}
\mathcal{D}
p_{k}+\mathcal{N}^{1,0}p_{k-1}+\mathcal{N}^{0,1}p_{k+1}+q^{1,0}p_{k-2}+q^{0,1}p_{k+2}-qp_{k}=0,\;\forall
k\in\{0,...,d+2\},
\end{equation}
with the convention $$p_{-d-4}=p_{-d-3}=p_{-d-2}=p_{-d-1}=p_{d+1}=p_{d+2}=p_{d+3}=p_{d+4}=0.$$

\begin{proof}
The result is a direct consequence of the fact that $\D
^{\lambda,q}p(\lambda)=0$ if and only if
\begin{multline*}
\sum_{k=-d}^d \mathcal{D}p_{k}\lambda^{k}+\sum_{k=-d+1}^{d+1} \mathcal{N}^{1,0}p_{k-1}\lambda^{k}+\sum_{k=-d-1}^{d-1} \mathcal{N}^{0,1}p_{k+1}\lambda^{k}+\\
\sum_{k=-d+2}^{d+2}q^{1,0}p_{k-2}\lambda^{k}+\sum_{k=-d-2}^{d-2} q^{0,1}p_{k+2}\lambda^{k}+\sum_{k=-d}^d
(-q^{1,0}-q^{0,1})p_{k}\lambda^{k}=0,
\end{multline*}
for all $\lambda\in\mathbb{C}\backslash\{0\}$.
\end{proof}
\end{proposition}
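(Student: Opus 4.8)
The plan is to prove Proposition \ref{iff condition} by directly expanding $\D^{\lambda}_q p(\lambda)$ using the definition \eqref{eq:dlamdaqV} of the connection and then collecting coefficients of each power $\lambda^k$, recognising that a Laurent polynomial vanishes identically on $\C\backslash\{0\}$ if and only if every coefficient vanishes. First I would substitute $p(\lambda)=\sum_{k=-d}^{d} p_k\lambda^k$ into
\[
\D^{\lambda}_q = \mathcal{D}+\lambda\mathcal{N}^{1,0}+\lambda^{-1}\mathcal{N}^{0,1}+(\lambda^2-1)q^{1,0}+(\lambda^{-2}-1)q^{0,1},
\]
and distribute. Each of the six operator-pieces, applied to $p(\lambda)$, produces a sum whose generic term carries a power of $\lambda$ shifted by the power attached to that piece: $\mathcal{D}$ contributes $\mathcal{D}p_k\,\lambda^k$, the $\lambda\mathcal{N}^{1,0}$ term contributes $\mathcal{N}^{1,0}p_k\,\lambda^{k+1}$, the $\lambda^{-1}\mathcal{N}^{0,1}$ term contributes $\mathcal{N}^{0,1}p_k\,\lambda^{k-1}$, and so on for the $q$-terms with shifts $\pm2$ and the undisplaced $-q=-q^{1,0}-q^{0,1}$ correction. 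This is exactly the multi-line display already recorded in the statement's own proof sketch.

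Next I would reindex each sum so that the summation variable matches the exponent of $\lambda$. For instance, $\sum_k \mathcal{N}^{1,0}p_k\,\lambda^{k+1}$ becomes $\sum_k \mathcal{N}^{1,0}p_{k-1}\,\lambda^{k}$, with the summation range shifting accordingly; I would do the analogous replacement $k\mapsto k+1$, $k\mapsto k\mp2$ for the remaining shifted sums. After reindexing, the coefficient of $\lambda^k$ in $\D^{\lambda}_q p(\lambda)$ is precisely
\[
\mathcal{D}p_k+\mathcal{N}^{1,0}p_{k-1}+\mathcal{N}^{0,1}p_{k+1}+q^{1,0}p_{k-2}+q^{0,1}p_{k+2}-qp_k,
\]
so \eqref{eq:pcq} holds if and only if every such coefficient vanishes. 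The convention $p_{-d-4}=\cdots=p_{-d-1}=p_{d+1}=\cdots=p_{d+4}=0$ is merely the device that lets one write each shifted sum over a uniform index range without tracking boundary terms separately.

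The only genuine bookkeeping issue — and what I expect to be the main (though still routine) obstacle — is confirming that checking the coefficient of $\lambda^k$ for $k\in\{0,\dots,d+2\}$ alone suffices, rather than for the full range $\{-d-2,\dots,d+2\}$ of exponents occurring in the expanded expression. This reduction follows from the reality condition \eqref{eq:1}, $p_{-k}=\overline{p_k}$: since $\mathcal{D}$, $\mathcal{N}^{1,0}$, $\mathcal{N}^{0,1}$, $q^{1,0}$, $q^{0,1}$ are built from the real connection $\D$ and the real form $q$ by type decomposition, complex conjugation interchanges the $(1,0)$ and $(0,1)$ parts and sends the equation for index $k$ to the equation for index $-k$. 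Hence the vanishing of the coefficient of $\lambda^k$ is equivalent to the vanishing of the coefficient of $\lambda^{-k}$, and it is enough to impose the equations for the non-negative exponents $k=0,1,\dots,d+2$. I would state this symmetry explicitly to justify restricting to $k\in\{0,\dots,d+2\}$, after which the proof is complete.
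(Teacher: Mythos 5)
Your proof is correct and takes essentially the same route as the paper: expand $\D^{\lambda}_{q}p(\lambda)$ using \eqref{eq:dlamdaqV}, reindex so each sum is organised by the exponent of $\lambda$, and invoke the fact that a Laurent polynomial vanishes on $\C\backslash\{0\}$ if and only if all its coefficients vanish. The one point you make explicit that the paper leaves implicit is the reduction to $k\in\{0,\dots,d+2\}$ via the conjugation symmetry (the coefficient of $\lambda^{-k}$ is the conjugate of the coefficient of $\lambda^{k}$, by \eqref{eq:1} and the reality of $\mathcal{D}$, $\mathcal{N}$ and $q$), which is a correct and worthwhile clarification.
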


\begin{proposition}\label{consequences}If
$p(\lambda)=\sum_{k=-d}^d p_{k}\lambda^{k}$ is a polynomial
conserved quantity of $(\Lambda,q)$ of type $d\in\mathbb{N}_{0}$,
then
\begin{enumerate}
\item $p_{0}$ is real;
\item $p(1)$ is real and constant;
\item $\mathcal{D}^{0,1}p_{d}=0$;
\item $\mathcal{D}^{1,0}p_{d}+\mathcal{N}^{1,0}p_{d-1}=0$;
\item $\mathcal{N}^{1,0}p_{d}+q^{1,0}p_{d-1}=0$;
\item the Laurent polynomial $(p(\lambda),p(\lambda))$ has constant (complex) coefficients.
\end{enumerate}
\end{proposition}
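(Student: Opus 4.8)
The plan is to derive all six consequences from the defining equation $\D^\lambda_q p(\lambda)=0$ together with the structural conditions \eqref{polynomial}, using the explicit form of $\D^\lambda_q$ from \eqref{eq:dlamdaqV} and the component equations \eqref{iff condition-equation} from Proposition \ref{iff condition}. First I would dispose of items (1) and (2), which are purely algebraic. Since $p(\lambda)=\sum_{k=-d}^d p_k\lambda^k$ with $p_{-k}=\overline{p_k}$ by \eqref{eq:1}, the $k=0$ term gives $p_0=\overline{p_0}$, so $p_0$ is real; this is item (1). For item (2), note $p(1)=\sum_k p_k$, and the reality condition pairs $p_k$ with $\overline{p_k}=p_{-k}$, so $\overline{p(1)}=\sum_k\overline{p_k}=\sum_k p_{-k}=p(1)$, giving that $p(1)$ is real. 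That $p(1)$ is constant follows because $p(1)$ is $\D^1_q$-parallel, and at $\lambda=1$ the connection $\D^1_q$ reduces to the trivial flat connection $\D$ (the terms $(\lambda^2-1)q^{1,0}$ and $(\lambda^{-2}-1)q^{0,1}$ vanish, while $\mathcal{D}+\mathcal{N}^{1,0}+\mathcal{N}^{0,1}=\mathcal{D}+\mathcal{N}=\D$); a $\D$-parallel section of the trivial bundle is constant.

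Next I would extract items (3), (4) and (5) from the top-degree part of \eqref{iff condition-equation}. The idea is to look at the highest powers of $\lambda$ that can appear in $\D^\lambda_q p(\lambda)$ and use the fact that each coefficient must vanish separately. The relevant equations are those for $k\in\{d,d+1,d+2\}$, read with the vanishing conventions $p_{d+1}=p_{d+2}=0$. I expect the $k=d+2$ equation to collapse to $q^{1,0}p_d=0$-type information once the type-parity condition \eqref{eq:3} and the placement $p_d\in\Gamma(S^\perp)$ from \eqref{eq:2} are invoked, and the $k=d+1$ equation to yield the mixed $\mathcal{N}^{1,0}p_d+q^{1,0}p_{d-1}$ relation, which is item (5). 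The key structural input here is that $\mathcal{N}$ swaps $S$ and $S^\perp$ while $\mathcal{D}$ and $q$ preserve the splitting $\underline{\mathbb R}^{n+1,1}=S\oplus S^\perp$; combined with the parity condition \eqref{eq:3}, this forces certain terms in each component equation to lie in complementary subbundles and hence to vanish independently. Separating \eqref{iff condition-equation} at $k=d$ into its $(1,0)$ and $(0,1)$ parts, and its $S$- and $S^\perp$-components, should then isolate $\mathcal{D}^{0,1}p_d=0$ (item (3)) and $\mathcal{D}^{1,0}p_d+\mathcal{N}^{1,0}p_{d-1}=0$ (item (4)).

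Finally, for item (6) I would compute the Laurent polynomial $(p(\lambda),p(\lambda))$ and show each of its coefficients is constant by differentiating along $\Sigma$. The natural route is to observe that $\D^\lambda_q$ is a metric connection for the (complex bilinear extension of the) metric on $\underline{\mathbb R}^{n+1,1}$ — this holds because each of $\mathcal{D}$, $\mathcal{N}$, and $q$ is skew with respect to the metric, being built from $o(\R^{n+1,1})$-valued forms via the identification $\wedge^2\R^{n+1,1}\cong o(\R^{n+1,1})$. Metric-compatibility gives $\D\,(p(\lambda),p(\lambda))=(\D^\lambda_q p(\lambda),p(\lambda))+(p(\lambda),\D^\lambda_q p(\lambda))=0$ for each fixed $\lambda$, where I use that the pairing of a $\D^\lambda_q$-parallel section with itself is $\D$-closed. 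Since this holds for all $\lambda\in\C\backslash\{0\}$ and $(p(\lambda),p(\lambda))$ is a Laurent polynomial in $\lambda$ with coefficients in $C^\infty(\Sigma,\C)$, the vanishing of its $\D$-derivative term by term forces every coefficient to be (locally) constant.

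The main obstacle I anticipate is item (6), specifically justifying that $\D^\lambda_q$ is metric for all $\lambda$, including the non-unitary $\lambda$, rather than only for $\lambda\in S^1$; the subtlety is that $q^{1,0}$ and $q^{0,1}$ are the $(1,0)$- and $(0,1)$-parts of a real form $q$ taking values in skew-symmetric endomorphisms, so each is individually skew for the complex bilinear metric, and likewise for $\mathcal{N}^{1,0},\mathcal{N}^{0,1}$. Once this skew-symmetry is confirmed coefficient-by-coefficient, the compatibility $\D(u,v)=(\D^\lambda_q u,v)+(u,\D^\lambda_q v)$ follows, and the argument closes. The extraction of items (3)--(5) should be routine bookkeeping provided the parity and subbundle placements from \eqref{polynomial} are tracked carefully against the $S\oplus S^\perp$ behaviour of each operator.
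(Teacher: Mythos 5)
Your proposal is correct and follows essentially the same route as the paper's proof: items (1)--(2) via the reality condition \eqref{eq:1} and evaluation of $\D^{\lambda}_{q}p(\lambda)=0$ at $\lambda=1$ (where $\D^{1}_{q}=\D$), items (3)--(5) via the component equations \eqref{iff condition-equation} at $k=d$ and $k=d+1$ (using that $qp_{d}=0=q^{1,0}p_{d-2}$ because $p_{d},p_{d-2}\in\Gamma(S^{\perp})$, then splitting into $(1,0)$ and $(0,1)$ parts), and item (6) via the metric compatibility of $\D^{\lambda}_{q}$ applied to $\D(p(\lambda),p(\lambda))=2(\D^{\lambda}_{q}p(\lambda),p(\lambda))=0$. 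The only difference is that you spell out why $\D^{\lambda}_{q}$ is metric for \emph{all} $\lambda\in\C\backslash\{0\}$ (skewness of $\mathcal{D}$, $\mathcal{N}^{1,0}$, $\mathcal{N}^{0,1}$, $q^{1,0}$, $q^{0,1}$ with respect to the complex bilinear metric), a point the paper simply takes for granted.
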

\begin{proof}
The reality of $p_{0}$ and $p(1)$ is a consequence of
$\overline{p_{0}}=p_{0}$ together with
$p(1)=p_{0}+2\sum_{k=1}^d\Ree(p_{k})$. The constancy of $p(1)$ is
immediate from evaluating $\D^{\lambda}_{q}p(\lambda)$ at
$\lambda=1$. On the other hand, for $k=d$, equation \eqref{iff
condition-equation} establishes $\mathcal{D}^{0,1}p_{d}=0$ and
$\mathcal{D}^{1,0}p_{d}+\mathcal{N}^{1,0}p_{d-1}=0$ (note that
$qp_{d}=0$ and $qp_{d-2}=0$, as
$p_{d},p_{d-2}\in\Gamma(S^{\perp})$), whereas, taking $k=d+1$, we
get $\mathcal{N}^{1,0}p_{d}+q^{1,0}p_{d-1}=0$. The fact that
$\D^{\lambda}_{q}$ is a metric connection, together with equation
\eqref{eq:pcq}, establishes
$$\D(p(\lambda),p(\lambda))=2(\D^{\lambda}_{q}p(\lambda),p(\lambda))=0,$$
for all $\lambda\in\mathbb{C}\backslash\{0\}$, showing that the
polynomial $(p(\lambda),p(\lambda))$ has constant coefficients and
completing the proof.
\end{proof}

According to Proposition \ref{consequences}, the existence of a
polynomial conserved quantity $p(\lambda)$ establishes, in
particular, $p(1)$ as a non-zero real vector in $\R^{n+1,1}$,
defining therefore a space-form $S_{p(1)}$, which will be of
particular geometric relevance, as we shall see later.

\medskip

\begin{proposition} If $(\Lambda,q)$ admits a polynomial conserved quantity $p(\lambda)$
of type $d$, then it admits a polynomial conserved quantity
$s(\lambda)$ of type $d+1$ with $s(1)=p(1)$.
\begin{proof}
Take $$s(\lambda):=\frac{1}{2}(\lambda^{-1}+\lambda)p(\lambda)=\frac{1}{2}\sum_{k=-d-1}^{d+1}(p_{k-1}+p_{k+1})\lambda^{k},$$
for $p(\lambda)=\sum_{k=-d}^d p_{k}\lambda^{k}$ and under the
convention $p_{-d-2}=p_{-d-1}=p_{d+1}=p_{d+2}=0$.
\end{proof}
\end{proposition}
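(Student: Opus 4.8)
The plan is to obtain $s(\lambda)$ by multiplying $p(\lambda)$ by a suitable scalar Laurent polynomial $f(\lambda)$ in the spectral parameter alone. The governing observation is that $f(\lambda)$ is constant along $\Sigma$, so it commutes with $\D^{\lambda}_{q}$ and
\[
\D^{\lambda}_{q}\bigl(f(\lambda)\,p(\lambda)\bigr)=f(\lambda)\,\D^{\lambda}_{q}p(\lambda)=0
\]
by \eqref{eq:pcq}. Hence the conserved-quantity equation for $s(\lambda):=f(\lambda)p(\lambda)$ comes for free, and the entire problem reduces to choosing $f$ so that the purely algebraic conditions \eqref{polynomial} hold at type $d+1$.

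Next I would read off the requirements on $f$. To raise the type from $d$ to $d+1$, $f$ should have degree $1$; to secure $s(1)=p(1)$ (which, as $p(1)\neq 0$, also yields \eqref{eq:4}), I need $f(1)=1$; and to preserve the reality \eqref{eq:1} I need $\overline{f_{j}}=f_{-j}$. Crucially, the parity condition \eqref{eq:3} forbids a constant term: were $f_{0}\neq 0$, the coefficient $s_{k}$ would combine $p_{k}$ with $p_{k\pm 1}$, which lie in \emph{different} subbundles, breaking \eqref{eq:3}. With $f_{0}=0$ the remaining freedom collapses to the natural choice
\[
f(\lambda)=\tfrac{1}{2}\bigl(\lambda+\lambda^{-1}\bigr),\qquad s_{k}=\tfrac{1}{2}\bigl(p_{k-1}+p_{k+1}\bigr),
\]
under the convention $p_{d+1}=p_{d+2}=p_{-d-1}=p_{-d-2}=0$.

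It then remains to verify \eqref{polynomial} for $s(\lambda)$. Reality is immediate from $\overline{s_{k}}=\tfrac{1}{2}(\overline{p_{k-1}}+\overline{p_{k+1}})=\tfrac{1}{2}(p_{-k+1}+p_{-k-1})=s_{-k}$, and $s(1)=f(1)p(1)=p(1)\neq 0$ settles \eqref{eq:4}. The one point deserving genuine attention is the subbundle condition \eqref{eq:3}: since $p_{k-1}$ and $p_{k+1}$ have indices of equal parity, both lie in $\Gamma(S^{\perp})$ exactly when $k-1\equiv d\pmod 2$, that is when $k\equiv d+1\pmod 2$, which is precisely \eqref{eq:3} for type $d+1$; taking $k=d+1$ gives the top term $s_{d+1}=\tfrac{1}{2}p_{d}\in\Gamma(S^{\perp})$, establishing \eqref{eq:2}.

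I do not expect a real obstacle here: the multiplier $q$ and the detailed structure of $\D^{\lambda}_{q}$ play no active role beyond the trivial commutation with the scalar $f(\lambda)$, and what is left is bookkeeping. If anything, the only delicate step is the parity accounting in \eqref{eq:3}, where one must check that multiplication by the degree-$1$ factor $f$ shifts every index by $\pm 1$ and thereby flips the parity class against which $S$ and $S^{\perp}$ are assigned, so that the alternation realigns with the raised top degree $d+1$.
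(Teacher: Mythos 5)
Your proposal is correct and coincides with the paper's own proof: the paper takes exactly the same $s(\lambda)=\tfrac{1}{2}(\lambda+\lambda^{-1})p(\lambda)$ with the same convention on vanishing coefficients, and simply leaves implicit the verification of the conditions \eqref{polynomial} that you spell out. Your bookkeeping (commutation of the scalar factor with $\D^{\lambda}_{q}$, reality, the parity flip realigning with $d+1$, the top term $s_{d+1}=\tfrac{1}{2}p_{d}\in\Gamma(S^{\perp})$, and $s(1)=p(1)\neq 0$) is accurate.
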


\subsection{Non-full constrained Willmore surfaces}

We say that a surface in $S^{n}$ is \emph{full} if it
does not lie in any proper sub-sphere of $S^{n}$. In the isothermic
context, type $0$ characterises \cite{BS} the surfaces which are not
full. This is also the case in the constrained Willmore context:

\begin{proposition}\label{type0}
$(\Lambda,q)$ is a special constrained Willmore surface of type $0$ if and only if
$\Lambda$ is not full.
\end{proposition}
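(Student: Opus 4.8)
The plan is to show that, for $d=0$, a polynomial conserved quantity is nothing but a constant vector orthogonal to the central sphere congruence, and then to identify this analytic condition with non-fullness.

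First I would unpack Definition \ref{definition} in the case $d=0$. Here $p(\lambda)=p_{0}$ carries no $\lambda$-dependence, and conditions \eqref{eq:1}--\eqref{eq:4} reduce to saying that $p_{0}$ is a real, nowhere-vanishing section of $S^{\perp}$. I would then invoke Proposition \ref{iff condition} with $d=0$: since $p_{k}=0$ for $k\neq 0$, the equations \eqref{iff condition-equation} for $k\in\{0,1,2\}$ (and their conjugates) collapse. The terms $q^{1,0}p_{0}$, $q^{0,1}p_{0}$ and $qp_{0}$ all vanish because $q$ takes values in $\Lambda\wedge\Lambda^{(1)}\subset\wedge^{2}S$, and, under the identification $u\wedge v(w)=(u,w)v-(v,w)u$, such an endomorphism annihilates $S^{\perp}$; what survives is exactly $\mathcal{D}p_{0}=0$ and $\mathcal{N}^{1,0}p_{0}=0$, the latter together with its conjugate giving $\mathcal{N}p_{0}=0$.

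Next I would use the splitting $\D=\mathcal{D}\oplus\mathcal{N}$, in which $\mathcal{D}$ preserves $S$ and $S^{\perp}$ while $\mathcal{N}$ interchanges them. For $p_{0}\in\Gamma(S^{\perp})$ the two summands $\mathcal{D}p_{0}\in\Gamma(S^{\perp})$ and $\mathcal{N}p_{0}\in\Gamma(S)$ lie in complementary bundles, so $\D p_{0}=\mathcal{D}p_{0}+\mathcal{N}p_{0}$ vanishes if and only if $\mathcal{D}p_{0}=0$ and $\mathcal{N}p_{0}=0$ both hold. Since $\D$ is the trivial connection, its parallel sections are the constant vectors. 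Combining with the previous step, a type $0$ conserved quantity is precisely a nonzero constant vector $v\in\R^{n+1,1}$ with $v\perp S$ at every point.

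It then remains to match the existence of such a $v$ with non-fullness, which is the geometric heart of the argument. For one direction, $v\perp S$ forces $v\in S^{\perp}$, a positive-definite bundle, so $v$ is space-like and $v^{\perp}$ is Lorentzian; as $\Lambda\subset S\subset v^{\perp}$, the surface lies in the proper sub-sphere $\mathbb{P}(\mathcal{L}\cap v^{\perp})$ and is therefore not full. For the converse, if $\Lambda\subset\mathbb{P}(\mathcal{L}\cap V)$ for a proper (Lorentzian) subspace $V$, I would propagate the inclusion by differentiation: choosing a lift $\sigma\in\Gamma(\Lambda)$ valued in $V$, constancy of $V$ gives $\D\sigma\in V$ and $\triangle\sigma\in V$, whence $\Lambda^{(1)}\subset V$ and $S\subset V$; any nonzero $v\in V^{\perp}\subset S^{\perp}$ is then a constant vector with $v\perp S$. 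The main obstacle is precisely this last translation: propagating a pointwise orthogonality condition on the constant vector $v$ from $\Lambda$ to all of $S$ through differentiation, and checking that the resulting subspace cuts out a genuine non-degenerate sub-sphere.
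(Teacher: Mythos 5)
Your proposal is correct and follows essentially the same route as the paper: both identify a type $0$ conserved quantity with a non-zero constant real vector lying in $S^{\perp}$ (using that $q$, being $\Lambda\wedge\Lambda^{(1)}$-valued, annihilates $S^{\perp}$, and that $\D p_{0}=0$ is equivalent to $\mathcal{D}p_{0}=0=\mathcal{N}p_{0}$), and then read off non-fullness via the sub-sphere $\mathbb{P}(\mathcal{L}\cap\langle p_{0}\rangle^{\perp})$. Your write-up merely makes explicit two points the paper leaves implicit, namely the reduction of \eqref{iff condition-equation} for $d=0$ and the propagation $S\subset\langle u\rangle^{\perp}$ by differentiating a lift in the converse direction.
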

\begin{proof}
Suppose $(\Lambda,q)$ admits a polynomial conserved quantity
$p(\lambda)=p_{0}$. Then $p_{0}$ is a non-zero, real constant section of
$S^{\perp}$ and $\Lambda$ lies therefore in the $(n-1)$-sphere
$\mathbb{P}(\mathcal{L}\cap\langle p_{0}\rangle^{\perp})$.
Conversely, if $\Lambda$ takes values in some sub-sphere, say
$\mathbb{P}(\mathcal{L}\cap\langle u\rangle^{\perp})$, where $u$ is
a positive definite vector of $\mathbb{R}^{n+1,1}$, we have
$u\in\Gamma(S^{\perp})$. Hence $p(\lambda):=u$ satisfies the
conditions \eqref{polynomial} and, for all
$\lambda\in\mathbb{C}\backslash\{0\}$,
$$\D^{\lambda}_{q}p(\lambda)=(\mathcal{D}+\lambda\mathcal{N}^{1,0}+\lambda^{-1}\mathcal{N}^{0,1})p_{0}=0,$$
as $p_{0}\in\Gamma(S^{\perp})$ and $\D p_{0}=0$ (or, equivalently,
$\mathcal{D}p_{0}=0=\mathcal{N}p_{0}$).
\end{proof}

\subsection{Surfaces with parallel mean curvature vector}

In the isothermic context, type $1$ characterises \cite{BurCal} (see also \cite{BS}), in general, the
$H$-generalised surfaces in some space-form. In the constrained Willmore context, type
$1$ with parallel top term characterises surfaces with parallel mean curvature vector, as we shall see in this section. In particular, for codimension $1$, type 1 characterises surfaces with constant mean curvature vector in both contexts.

Given $v_{\infty}\in\R^{n+1,1}$ such that $v_{\infty}\notin\Gamma(\Lambda^{\perp})$ and $\sigma\in\Gamma(\Lambda)$
never-zero, $\Lambda$ defines a local immersion
$$\sigma_{\infty}:=(\pi\vert_{S_{v_{\infty}}})^{-1}\circ
\Lambda=-\frac{1}{(\sigma,v_{\infty})}\,\sigma:\Sigma\rightarrow
S_{v_{\infty}},$$ of $\Sigma$ into the space-form $S_{v_{\infty}}$. Let $v_{\infty}^{T}$ and $v_{\infty}^{\perp}$ denote the orthogonal projections of $v_{\infty}$ onto $S$ and $S^{\perp}$, respectively. Consider the normal bundle $V_{v_{\infty}}^{\perp}=(\Lambda^{(1)}\oplus\langle v_{\infty}\rangle)^{\perp}$ of $\sigma_{\infty}$ and let $\mathbf{H}$ denote the mean curvature vector of $\sigma_{\infty}$. The map $\mathcal{Q}:V_{v_{\infty}}^{\perp}\longrightarrow S^{\perp}$ defined by $$\mathcal{Q}:\xi\mapsto(\mathbf{H},\xi)\sigma_{\infty}+\xi$$ is an isomorphism of bundles preserving connections.
Note that, as $$(v_{\infty}^{\perp},(\mathbf{H},\xi)\sigma_{\infty}+\xi)=-(\mathbf{H},\xi)=(-(\mathbf{H},\mathbf{H})\sigma_{\infty}-\mathbf{H},(\mathbf{H},\xi)\sigma_{\infty}+\xi),\,\forall\xi\in\Gamma(V_{v_{\infty}}^{\perp})$$ we have
\begin{equation}\label{QHvinftperp}
\mathcal{Q}\mathbf{H}=-v_{\infty}^{\perp}
\end{equation}
and then $\nabla^{V_{v_{\infty}}^{\perp}}\mathbf{H}=0$ (i.e., $\mathbf{H}$ parallel) if and only if $\mathcal{D}v_{\infty}^{\perp}=0$.

In this section, we first recall that parallel mean curvature vector surfaces are,
indeed, examples of constrained Willmore surfaces, proving it in our setting and
providing a Lagrange multiplier.

\begin{proposition}\label{multiplier}
Suppose $\Lambda$ has parallel mean curvature vector in some
space-form $S_{v_{\infty}}$. Then $(\Lambda,q_{\infty})$ is a constrained Willmore
surface, where
$$q_{\infty}:=\frac{1}{2}\,\sigma_{\infty}\wedge\mathcal{N}v_{\infty}^{\perp}=\frac{1}{2}\,\sigma_{\infty}\wedge \D
v_{\infty}^{\perp}.$$
\end{proposition}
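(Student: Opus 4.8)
The plan is to verify directly the two conditions of Theorem~\ref{CWtemp} for the proposed multiplier $q_{\infty}$. Two preliminary observations streamline everything. Since $v_{\infty}$ is a fixed vector of $\R^{n+1,1}$, it is $\D$-parallel, and decomposing $0=\D v_{\infty}=\D v_{\infty}^{T}+\D v_{\infty}^{\perp}$ along $S\oplus S^{\perp}$ (recall $\D=\mathcal{D}\oplus\mathcal{N}$) gives, on comparing $S$- and $S^{\perp}$-components, $\mathcal{N}v_{\infty}^{\perp}=-\mathcal{D}v_{\infty}^{T}$ and $\mathcal{D}v_{\infty}^{\perp}=-\mathcal{N}v_{\infty}^{T}$; the hypothesis $\mathcal{D}v_{\infty}^{\perp}=0$ (equivalent, by \eqref{QHvinftperp} and the remark following it, to $\mathbf{H}$ being parallel) thus yields $\mathcal{N}v_{\infty}^{T}=0$ together with $\D v_{\infty}^{\perp}=\mathcal{N}v_{\infty}^{\perp}$, which already accounts for the two expressions given for $q_{\infty}$. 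Second, for any $\xi\in\Gamma(S^{\perp})$ one has $(\mathcal{N}\xi,\sigma)=(\D\xi,\sigma)=\D(\xi,\sigma)-(\xi,\D\sigma)=0$, because $\mathcal{D}\xi\in\Gamma(S^{\perp})$, $\Lambda\subset S$, and $\D\sigma\in\Omega^{1}(\Lambda^{(1)})\subset\Omega^{1}(S)$; hence $\mathcal{N}\xi$ takes values in $S\cap\Lambda^{\perp}=\Lambda^{(1)}$. Applied to $\xi=v_{\infty}^{\perp}$, and using that $\sigma_{\infty}$ and $v_{\infty}^{\perp}$ are real, this shows at once that $q_{\infty}=\tfrac12\,\sigma_{\infty}\wedge\D v_{\infty}^{\perp}$ is a real form in $\Omega^{1}(\Lambda\wedge\Lambda^{(1)})$, as required.

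Next I would establish \eqref{eq:curlyDextderivofq}. The cleanest route is to pass from $\D^{\mathcal{D}}$ to the flat $\D$ via $\D^{\mathcal{D}}q_{\infty}=d^{\D}q_{\infty}-[\mathcal{N}\wedge q_{\infty}]$. On the one hand, the Leibniz rule and flatness of $\D$ give $d^{\D}q_{\infty}=\tfrac12\,\D\sigma_{\infty}\wedge\D v_{\infty}^{\perp}$, since $d^{\D}(\D v_{\infty}^{\perp})=R^{\D}v_{\infty}^{\perp}=0$. On the other hand, using $\mathcal{N}\sigma_{\infty}=0$ (which holds because $\D\sigma_{\infty}\in\Omega^{1}(\Lambda^{(1)})\subset\Omega^{1}(S)$), the bracket term collapses to $[\mathcal{N}\wedge q_{\infty}]=\tfrac12\,\sigma_{\infty}\wedge\big((\mathcal{N}\wedge\mathcal{N})\,v_{\infty}^{\perp}\big)$, where $(\mathcal{N}\wedge\mathcal{N})(X,Y)=[\mathcal{N}_{X},\mathcal{N}_{Y}]$. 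The $S^{\perp}$-component of the vanishing curvature of $\D$ reads $R^{\mathcal{D}}+\mathcal{N}\wedge\mathcal{N}=0$ on $S^{\perp}$, and $\mathcal{D}v_{\infty}^{\perp}=0$ forces $R^{\mathcal{D}}v_{\infty}^{\perp}=0$, so $(\mathcal{N}\wedge\mathcal{N})v_{\infty}^{\perp}=0$ and the bracket term vanishes. Thus $\D^{\mathcal{D}}q_{\infty}=\tfrac12\,\D\sigma_{\infty}\wedge\D v_{\infty}^{\perp}$, and it remains to check that this residual $(1,1)$-form, valued in $\wedge^{2}\Lambda^{(1)}$, vanishes; I would do this by splitting $\D v_{\infty}^{\perp}=\mathcal{N}v_{\infty}^{\perp}$ into $(1,0)$- and $(0,1)$-parts and invoking conformality of $\Lambda$ (so that $\Lambda^{1,0}$ and $\Lambda^{0,1}$ are isotropic) together with the defining property of the central sphere congruence, which pins down the images of $\mathcal{N}^{1,0}$ and $\mathcal{N}^{0,1}$ inside $\Lambda^{(1)}$.

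The main obstacle is the Willmore-type equation \eqref{eq:mainCWeq}. Both sides are off-diagonal with respect to $S\oplus S^{\perp}$: the left-hand side $\D^{\mathcal{D}}*\mathcal{N}$ is the exterior $\mathcal{D}$-derivative of the off-diagonal $1$-form $*\mathcal{N}$, and the right-hand side lies in $\Omega^{2}(S\wedge S^{\perp})$ as well, since $q_{\infty}(X)=\tfrac12\,\sigma_{\infty}\wedge\mathcal{N}_{X}v_{\infty}^{\perp}$ together with $\mathcal{N}\sigma_{\infty}=0$ (whence also $*\mathcal{N}\,\sigma_{\infty}=0$) forces each bracket $[q_{\infty}(X),(*\mathcal{N})(Y)]$ to be of the form $\sigma_{\infty}\wedge(\text{section of }S^{\perp})$, so that $[q_{\infty}\wedge*\mathcal{N}]$ is valued in $\Lambda\wedge S^{\perp}$. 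The plan is to compute $\D^{\mathcal{D}}*\mathcal{N}$ explicitly, using $d^{\mathcal{D}}\mathcal{N}=0$ (the off-diagonal part of the flatness of $\D$) to trade the derivative of $*\mathcal{N}$ for an algebraic, commutator-type expression in $\mathcal{N}$, and then to recognise that the parallelism $\mathcal{D}v_{\infty}^{\perp}=0$ is precisely what converts this expression into $2[q_{\infty}\wedge*\mathcal{N}]$, matching the two sides. The delicate points are the careful bookkeeping of the conformal Hopf differential (the $S^{\perp}$-valued second-order data entering $\D^{\mathcal{D}}*\mathcal{N}$) and the verification of the numerical factor $2$; this is where I expect the bulk of the work, and where the parallel mean curvature hypothesis is used in an essential way, through $\mathcal{Q}\mathbf{H}=-v_{\infty}^{\perp}$ and $\mathcal{D}v_{\infty}^{\perp}=0$.
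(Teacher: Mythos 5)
Your preliminaries and your treatment of condition \eqref{eq:curlyDextderivofq} are correct, and your route to the latter genuinely differs from the paper's: where the paper simply expands $\D^{\mathcal{D}}q_{\infty}(\tfrac{\partial}{\partial z},\tfrac{\partial}{\partial\bar z})$ in coordinates and notes that the second-order term $\sigma_{\infty}\wedge\pi_{S}\big((v_{\infty}^{\perp})_{\bar z z}-(v_{\infty}^{\perp})_{z\bar z}\big)$ vanishes trivially, you pass through $\D^{\mathcal{D}}q_{\infty}=\D^{\D}q_{\infty}-[\mathcal{N}\wedge q_{\infty}]$ and kill the bracket term via the diagonal part of flatness ($R^{\mathcal{D}}=-\mathcal{N}\wedge\mathcal{N}$ on $S^{\perp}$) together with $R^{\mathcal{D}}v_{\infty}^{\perp}=0$; that is a clean, more structural argument. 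Both routes land on the same residual term $\tfrac12\,\D\sigma_{\infty}\wedge\D v_{\infty}^{\perp}$, and your sketched finish is the paper's. One point to make explicit there: orthogonality to $(\sigma_{\infty})_{\bar z}$ (from the central sphere congruence, $\mathcal{N}_{z}(\sigma_{\infty})_{\bar z}=\pi_{S^{\perp}}(\sigma_{\infty})_{z\bar z}=0$) only places $(v_{\infty}^{\perp})_{z}$ in $\Lambda\oplus\langle(\sigma_{\infty})_{\bar z}\rangle$; it is orthogonality to $v_{\infty}$, i.e.\ constancy of $(v_{\infty}^{\perp},v_{\infty}^{\perp})$ coming from $\mathcal{D}v_{\infty}^{\perp}=0$, that removes the $\Lambda$-component, and without that the two cross terms do not cancel.

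The genuine gap is equation \eqref{eq:mainCWeq}, which is the heart of the characterisation in Theorem \ref{CWtemp} and which you do not prove: you state a plan and defer the ``bulk of the work''. Moreover, the mechanism you propose cannot work as described. The identity $\D^{\mathcal{D}}\mathcal{N}=0$ holds for \emph{every} surface (it is the off-diagonal part of the flatness of $\D$), and all it does to $\D^{\mathcal{D}}\ast\mathcal{N}$ is identify its two halves, giving $\D^{\mathcal{D}}\ast\mathcal{N}(\tfrac{\partial}{\partial z},\tfrac{\partial}{\partial\bar z})\,\xi=2i\big(\mathcal{D}_{z}(\mathcal{N}_{\bar z}\xi)-\mathcal{N}_{\bar z}(\mathcal{D}_{z}\xi)\big)$ up to convention --- still a $\mathcal{D}$-derivative of $\mathcal{N}$, not an ``algebraic, commutator-type expression in $\mathcal{N}$''; if such a general algebraic reduction existed, \eqref{eq:mainCWeq} would not be the nontrivial Euler--Lagrange-type condition that it is. What is actually needed (and what the paper does) is to exploit the decomposition $S=\Lambda^{(1)}+\langle v_{\infty}^{T}\rangle$: since both sides are $2$-forms valued in $S\wedge S^{\perp}$ (you have this), equality need only be checked on $S$; on $\Lambda^{(1)}$ both sides vanish --- the right-hand side because, as you showed, it is valued in $\Lambda\wedge S^{\perp}$, which annihilates $\Lambda^{(1)}$, and the left-hand side by a short computation using $\mathcal{N}_{\bar z}\Lambda^{1,0}=0=\mathcal{N}_{z}\Lambda^{0,1}$; and on $v_{\infty}^{T}$ one computes both sides explicitly, using $\mathcal{N}v_{\infty}^{T}=0$, $(\sigma_{\infty},v_{\infty}^{T})=-1$, $(\mathcal{N}v_{\infty}^{\perp},v_{\infty}^{T})=0$ and the commutation $\mathcal{N}_{z}(\mathcal{N}_{\bar z}v_{\infty}^{\perp})=\mathcal{N}_{\bar z}(\mathcal{N}_{z}v_{\infty}^{\perp})$ (again parallelness plus symmetry of mixed partials), finding that both equal $2i\,\mathcal{N}_{z}(\mathcal{N}_{\bar z}v_{\infty}^{\perp})$; this evaluation is precisely where the factor $\tfrac12$ in $q_{\infty}$ and the factor $2$ in \eqref{eq:mainCWeq} get reconciled. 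Without this, or an equivalent computation, your proposal does not establish the proposition.
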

\begin{proof}
First of all, note that the real $1$-form $q$ takes values in $\Lambda\wedge\Lambda^{(1)}$, as $\mathcal{N}v_{\infty}^{\perp}$ lives in $S$ and $$(\mathcal{N}v_{\infty}^{\perp},\sigma_{\infty})=-(v_{\infty}^{\perp},\mathcal{N}\sigma_{\infty})=-(v_{\infty}^{\perp},0)=0.$$

Taking into account that
\begin{equation*}
\begin{split}
2\D^{\mathcal{D}}q_{\infty}\Big(\frac{\partial}{\partial z},\frac{\partial}{\partial \bar{z}}\Big)&=(\sigma_{\infty})_{z}\wedge (v_{\infty}^{\perp})_{\bar{z}}-(\sigma_{\infty})_{\bar{z}}\wedge (v_{\infty}^{\perp})_{z}+\sigma_{\infty}\wedge \pi_{S}\big((v_{\infty}^{\perp})_{\bar{z}z}-(v_{\infty}^{\perp})_{z\bar{z}}\big)\\
&=(\sigma_{\infty})_{z}\wedge (v_{\infty}^{\perp})_{\bar{z}}-(\sigma_{\infty})_{\bar{z}}\wedge (v_{\infty}^{\perp})_{z},
\end{split}
\end{equation*}
for $\pi_{S}$ the orthogonal projection of $\underline{\mathbb R}^{n+1,1}$ onto $S$,
and noting that $(v_{\infty}^{\perp})_{z}\in\Gamma(\langle (\sigma_{\infty})_{\bar{z}}\rangle)$ and $(v_{\infty}^{\perp})_{\bar{z}}\in\Gamma(\langle (\sigma_{\infty})_{z}\rangle)$, by virtue of $(v_{\infty}^{\perp})_{z}$ being orthogonal to $v_{\infty}$ and $(\sigma_{\infty})_{\bar{z}}$, and $(v_{\infty}^{\perp})_{\bar{z}}$ being orthogonal to $v_{\infty}$ and $(\sigma_{\infty})_{z}$, we conclude that $\D^{\mathcal{D}}q_{\infty}=0$.

Now let us prove that $\D^{\mathcal{D}}\ast\mathcal{N}=2[q_{\infty}\wedge\ast\mathcal{N}]$. For that, first observe that $\D^{\mathcal{D}}\ast\mathcal{N}$ and $[q_{\infty}\wedge\ast\mathcal{N}]$ are determined by the respective restrictions to $S$, as both are $2$-forms taking values in $S\wedge S^{\perp}$. It is therefore enough to prove the equality in $S$.

For each $\xi\in\Gamma(\underline{\mathbb{R}}^{n+1,1})$,
$$\D^{\mathcal{D}}\ast\mathcal{N}\Big(\frac{\partial}{\partial z},\frac{\partial}{\partial \bar{z}}\Big)\xi
=2i\big(\mathcal{D}_{z}(\mathcal{N}_{\bar{z}}\xi)-\mathcal{N}_{\bar{z}}(\mathcal{D}_{z}\xi)+\mathcal{D}_{\bar{z}}(\mathcal{N}_{z}\xi)-\mathcal{N}_{z}(\mathcal{D}_{\bar{z}}\xi)\big).$$
In view of the flatness of $\D$, characterised by
$$ R^{\mathcal{D}}+\D^{\mathcal{D}}\mathcal{N}+\frac{1}{2}\,[\mathcal{N}\wedge
\mathcal{N}]=0,$$
and encoding, in particular, $\D^{\mathcal{D}}\mathcal{N}=0$, it follows that
\begin{equation}\label{first}
\D^{\mathcal{D}}\ast\mathcal{N}\Big(\frac{\partial}{\partial z},\frac{\partial}{\partial \bar{z}}\Big)\xi=2i\big(\mathcal{D}_{z}(\mathcal{N}_{\bar{z}}\xi)-\mathcal{N}_{\bar{z}}(\mathcal{D}_{z}\xi)\big)=2i\big(\mathcal{D}_{\bar{z}}(\mathcal{N}_{z}\xi)-\mathcal{N}_{z}(\mathcal{D}_{\bar{z}}\xi)\big).
\end{equation}
On the other hand,
\begin{equation}\label{second}
\begin{split}
2[q_{\infty}\wedge\ast\mathcal{N}]\Big(\frac{\partial}{\partial z},\frac{\partial}{\partial \bar{z}}\Big)\xi
&=2i\big((q_{\infty})_{z}(\mathcal{N}_{\bar{z}}\xi)+(q_{\infty})_{\bar{z}}(\mathcal{N}_{z}\xi)-\mathcal{N}_{z}((q_{\infty})_{\bar{z}}\xi)-\mathcal{N}_{\bar{z}}((q_{\infty})_{z}\xi)\big).
\end{split}
\end{equation}
We will start by proving that both (\ref{first}) and (\ref{second}) vanish, whenever $\xi\in\Gamma(\Lambda^{(1)})$.

As for (\ref{first}), we have
$$\mathcal{D}_{z}(\mathcal{N}_{\bar{z}}\sigma_{\infty})-\mathcal{N}_{\bar{z}}(\mathcal{D}_{z}\sigma_{\infty})=-\mathcal{N}_{\bar{z}}((\sigma_{\infty})_{z})=0$$
and, as $\mathcal{D}_{z}(\sigma_{\infty})_{z}$ takes values in $\Lambda^{1,0}$,
$$\mathcal{D}_{z}(\mathcal{N}_{\bar{z}}(\sigma_{\infty})_{z})-\mathcal{N}_{\bar{z}}(\mathcal{D}_{z}(\sigma_{\infty})_{z})=-\mathcal{N}_{\bar{z}}(\mathcal{D}_{z}(\sigma_{\infty})_{z})=0.$$ Similarly we get
$$\mathcal{D}_{\bar{z}}(\mathcal{N}_{z}(\sigma_{\infty})_{\bar{z}})-\mathcal{N}_{z}(\mathcal{D}_{\bar{z}}(\sigma_{\infty})_{\bar{z}})=0.$$

On the other hand, since $(\mathcal{N}\xi,\sigma_{\infty})=-(\xi,\mathcal{N}\sigma_{\infty})=0$, for every $\xi\in\Gamma(S^{\perp})$, we have $\mathcal{N}\in\Omega^{2}(\Lambda^{(1)}\wedge S^{\perp})$ and also $\ast\mathcal{N}\in\Omega^{2}(\Lambda^{(1)}\wedge S^{\perp})$. Having in consideration that $$[T,a\wedge b]=(Ta)\wedge b+a\wedge(Tb),$$ for all $T\in o(\mathbb{R}^{n+1,1})$ and $a,b\in\mathbb{R}^{n+1,1}$, we conclude that $[q_{\infty}\wedge\ast\mathcal{N}]\in\Omega^{2}(\Lambda\wedge S^{\perp})$ and, therefore, that (\ref{second}) vanishes for all $\xi\in\Lambda^{(1)}$.

The proof is therefore complete if we establish the equality between (\ref{first}) and (\ref{second}) for $\xi=v_{\infty}^{T}$. Since
$$\mathcal{N}v_{\infty}^{T}=-\mathcal{D}v_{\infty}^{\perp}=0$$
and
$$\mathcal{N}_{z}(\mathcal{N}_{\bar{z}}v_{\infty}^{\perp})=\mathcal{N}_{z}((v_{\infty}^{\perp})_{\bar{z}})=\pi_{S^{\perp}}((v_{\infty}^{\perp})_{\bar{z}z})
=\pi_{S^{\perp}}((v_{\infty}^{\perp})_{z\bar{z}})=\mathcal{N}_{\bar{z}}(\mathcal{N}_{z}v_{\infty}^{\perp}),$$
we get
\begin{equation*}
\begin{split}
2[q_{\infty}\wedge\ast\mathcal{N}]\Big(\frac{\partial}{\partial z},\frac{\partial}{\partial \bar{z}}\Big)v_{\infty}^{T}
&=-i\big(\mathcal{N}_{z}((\sigma_{\infty},v_{\infty}^{T})\mathcal{N}_{\bar{z}}v_{\infty}^{\perp})+\mathcal{N}_{\bar{z}}((\sigma_{\infty},v_{\infty}^{T})\mathcal{N}_{z}v_{\infty}^{\perp})\big)\\
&=i\big(\mathcal{N}_{z}(\mathcal{N}_{\bar{z}}v_{\infty}^{\perp})+\mathcal{N}_{\bar{z}}(\mathcal{N}_{z}v_{\infty}^{\perp})\big)\\
&=2i\mathcal{N}_{z}(\mathcal{N}_{\bar{z}}v_{\infty}^{\perp})\\
&=-2i\mathcal{N}_{z}(\mathcal{D}_{\bar{z}}v_{\infty}^{T})\\
&=2i\big(\mathcal{D}_{\bar{z}}(\mathcal{N}_{z}v_{\infty}^{T})-\mathcal{N}_{z}(\mathcal{D}_{\bar{z}}v_{\infty}^{T})\big)\\
&=\D^{\mathcal{D}}\ast\mathcal{N}\Big(\frac{\partial}{\partial z},\frac{\partial}{\partial \bar{z}}\Big)v_{\infty}^{T}.
\end{split}
\end{equation*}
\end{proof}

\begin{proposition}\label{pcqtype1}
Suppose $(\Lambda,q)$ is a constrained Willmore surface. A Laurent polynomial $p(\lambda)$ satisfying the conditions
(\ref{polynomial}) for $d=1$ is a polynomial conserved quantity of
$(\Lambda,q)$ if and only if $$\D p(1)=0,
\;\mathcal{D}^{0,1}p_{1}=0,
\;\mathcal{N}^{1,0}p_{1}+q^{1,0}p_{0}=0.$$
\end{proposition}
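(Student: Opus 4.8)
The plan is to feed everything through Proposition~\ref{iff condition}, which for $d=1$ turns the conserved-quantity condition \eqref{eq:pcq} into the four coefficient equations \eqref{iff condition-equation} for $k\in\{0,1,2,3\}$. Before doing so I would record the simplifications forced by \eqref{polynomial}: here $p_1,p_{-1}\in\Gamma(S^\perp)$ and $p_0\in\Gamma(S)$ is real, with $p_{-1}=\overline{p_1}$, while $q\in\Omega^1(\Lambda\wedge\Lambda^{(1)})\subset\Omega^1(\wedge^2 S)$ annihilates $S^\perp$, so that $qp_{\pm1}=0$ and in particular $q^{1,0}p_{-1}=0$. The $k=3$ equation then reduces to $q^{1,0}p_1=0$ and is automatic, and the $k=2$ equation is exactly the third hypothesis $\mathcal{N}^{1,0}p_1+q^{1,0}p_0=0$. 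Hence being a polynomial conserved quantity is equivalent to that hypothesis together with the $k=0$ and $k=1$ equations
$$\mathcal{D}p_0+\mathcal{N}^{1,0}p_{-1}+\mathcal{N}^{0,1}p_1-qp_0=0,\qquad \mathcal{D}p_1+\mathcal{N}^{1,0}p_0+q^{1,0}p_{-1}=0.$$

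The forward implication then requires no computation: specialising Proposition~\ref{consequences} to $d=1$ gives $\D p(1)=0$ (item~(2)), $\mathcal{D}^{0,1}p_1=0$ (item~(3)) and $\mathcal{N}^{1,0}p_1+q^{1,0}p_0=0$ (item~(5)), which are precisely the three asserted conditions.

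For the converse I would use that $\mathcal{D}$ preserves the splitting $\underline{\R}^{n+1,1}=S\oplus S^\perp$ while $\mathcal{N}$ interchanges its summands, so that $\D p(1)=0$ separates into an $S$- and an $S^\perp$-component, each of which I split further by type. Since $\mathcal{D},\mathcal{N},q$ and $p_0$ are real and $\overline{p_1}=p_{-1}$, conjugating the two pointwise hypotheses produces the companions $\mathcal{D}^{1,0}p_{-1}=0$ and $\mathcal{N}^{0,1}p_{-1}+q^{0,1}p_0=0$. The $(1,0)$-part of the $S^\perp$-component of $\D p(1)=0$ reads $\mathcal{D}^{1,0}p_{-1}+\mathcal{N}^{1,0}p_0+\mathcal{D}^{1,0}p_1=0$; discarding the first term by the first companion, rewriting $\mathcal{D}^{1,0}p_1=\mathcal{D}p_1$ via $\mathcal{D}^{0,1}p_1=0$, and restoring $q^{1,0}p_{-1}=0$ recovers the $k=1$ equation. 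The full $S$-component of $\D p(1)=0$ differs from the $k=0$ equation exactly by $\mathcal{N}^{0,1}p_{-1}+\mathcal{N}^{1,0}p_1+qp_0$, which vanishes once $\mathcal{N}^{1,0}p_1=-q^{1,0}p_0$ and $\mathcal{N}^{0,1}p_{-1}=-q^{0,1}p_0$ are substituted; so the $k=0$ equation holds as well, and Proposition~\ref{iff condition} closes the argument.

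The only real labour is the bookkeeping of the $S/S^\perp$ and $(1,0)/(0,1)$ decompositions in the converse, and the single point that must be got right is that the terms separating the components of $\D p(1)=0$ from the $k=0$ and $k=1$ equations are cancelled precisely by the conjugates of the two pointwise hypotheses — in other words, no identity beyond the reality symmetry $\overline{\D^{\lambda}_{q}}=\D^{1/\overline{\lambda}}_{q}$ of the associated family is needed.
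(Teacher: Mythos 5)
Your proposal is correct and follows essentially the same route as the paper: the forward direction is read off from Proposition \ref{consequences}, and the converse reduces, via Proposition \ref{iff condition}, to the $k=0,1,2$ coefficient equations, which you recover by splitting $\D p(1)=0$ into its $S$/$S^{\perp}$ components and $(1,0)/(0,1)$ types and cancelling with the conjugates of the hypotheses $\mathcal{D}^{0,1}p_{1}=0$ and $\mathcal{N}^{1,0}p_{1}+q^{1,0}p_{0}=0$ — exactly the paper's equations (\ref{A}), (\ref{B}) and its real-part argument, phrased as an explicit substitution.
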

\begin{proof}
By Proposition \ref{consequences}, we are left to prove that these
three equations establish $p(\lambda)$ as a polynomial conserved
quantity of $(\Lambda,q)$. For that, suppose that $\D p(1)=0,
\;\mathcal{D}^{0,1}p_{1}=0\mbox{ and
}\mathcal{N}^{1,0}p_{1}+q^{1,0}p_{0}=0$ and first note that,
according to Proposition \ref{iff condition}, it is enough to
establish
$$\mathcal{N}^{1,0}p_{1}+q^{1,0}p_{0}=0,\,\,\mathcal{D}p_{1}+\mathcal{N}^{1,0}p_{0}=0$$ and $$\mathcal{D}p_{0}+\mathcal{N}^{1,0}p_{-1}+\mathcal{N}^{0,1}p_{1}-qp_{0}=0.$$
Considering orthogonal projections onto $S$ and $S^{\perp}$, $\D
v_{\infty}=0$ gives
\begin{equation}\label{A}
\mathcal{D}(p_{-1}+p_{1})+\mathcal{N}p_{0}=0
\end{equation}
and
\begin{equation}\label{B}
\mathcal{N}(p_{-1}+p_{1})+\mathcal{D}p_{0}=0.
\end{equation}
From (\ref{A}), we get
$\mathcal{D}^{1,0}(p_{-1}+p_{1})+\mathcal{N}^{1,0}p_{0}=0$. But
$\mathcal{D}^{0,1}p_{1}=0$ or, equivalently,
$\mathcal{D}^{1,0}p_{-1}=0$. Hence
$\mathcal{D}^{1,0}(p_{-1}+p_{1})=\mathcal{D}p_{1}$ and then
$\mathcal{D}p_{1}+\mathcal{N}^{1,0
}p_{0}=0$. Finally, in view of
(\ref{B}),
$$\mathcal{D}p_{0}+\mathcal{N}^{1,0}p_{-1}+\mathcal{N}^{0,1}p_{1}-qp_{0}=-\mathcal{N}(p_{-1}+p_{1})+\mathcal{N}^{1,0}p_{-1}+\mathcal{N}^{0,1}p_{1}-qp_{0},$$
which implies
\begin{equation*}
\begin{split}
\mathcal{D}p_{0}+\mathcal{N}^{1,0}p_{-1}+\mathcal{N}^{0,1}p_{1}-qp_{0}&=-\mathcal{N}^{0,1}p_{-1}-\mathcal{N}^{1,0}p_{1}-qp_{0}
\\&=-2\Ree(\mathcal{N}^{1,0}p_{1}+q^{1,0}p_{0})\\&=0
\end{split}
\end{equation*}
and completes the
proof.
\end{proof}

\begin{remark} If $(\Lambda,q)$ is a constrained Willmore surface and $p(\lambda)$ is a polynomial conserved quantity of
type $1$ of $(\Lambda,q)$, then
$$q=\sigma_{\infty}\wedge(\mathcal{N}^{1,0}p_{1}+\mathcal{N}^{0,1}p_{-1})=2\sigma_{\infty}\wedge\mbox{Re}(\mathcal{N}^{1,0}p_{1}).$$
As a matter of fact, $q=\sigma_{\infty}\wedge\vartheta$, for some
$\vartheta\in\Omega^{1}(\Lambda^{(1)})$, so that $q
p_{0}=-\vartheta-(\vartheta,p_{0})\sigma_{\infty}$, as
$$(\sigma_{\infty},p_{0})=(\sigma_{\infty},v_{\infty}^{T})=(\sigma_{\infty},v_{\infty})=-1.$$ Hence $$q=-\sigma_{\infty}\wedge(q
p_{0}+(\vartheta,p_{0})\sigma_{\infty})=-\sigma_{\infty}\wedge q
p_{0}.$$ The result is now an immediate consequence of
$$q p_{0}=q^{1,0}p_{0}+q^{0,1}p_{0}=-\mathcal{N}^{1,0}p_{1}-\mathcal{N}^{0,1}p_{-1}=-2\mbox{Re}(\mathcal{N}^{1,0}p_{1}).$$
\end{remark}

\begin{theorem}\label{characterisation of H-parallel surfaces} $\Lambda$ is a constrained Willmore surface admitting a polynomial conserved
quantity $p(\lambda)$ of type $1$ with parallel top term and $p(1)=v_{\infty}$ if and only if the surface $\sigma_{\infty}$ defined by $\Lambda$ in
$S_{v_{\infty}}$ has parallel mean curvature vector.

\begin{proof}
Suppose first that the surface $\Lambda$ admits parallel mean
curvature vector in the space-form $S_{v_{\infty}}$. According to Proposition \ref{multiplier}, we get then
the constrained Willmore surface $(\Lambda,q_{\infty})$, for
$q_{\infty}:=\frac{1}{2}\sigma_{\infty}\wedge\mathcal{N}v_{\infty}^{\perp}$.
Considering
\begin{equation}\label{polynomial_infty}
p(\lambda):=\frac{1}{2}v_{\infty}^{\perp}\lambda^{-1}+v_{\infty}^{T}+\frac{1}{2}v_{\infty}^{\perp}\lambda,
\end{equation}
we have $p(1)=v_{\infty}$ constant and $\mathcal{D}p_{1}=\frac{1}{2}\mathcal{D}v_{\infty}^{\perp}=0$. Furthermore, $$\mathcal{N}^{1,0}p_{1}+(q_{\infty})^{1,0}p_{0}=0,$$ as $\mathcal{N}v_{\infty}^{T}=-\mathcal{D}v_{\infty}^{\perp}=0$, and then
$$\mathcal{N}p_{1}+q_{\infty}p_{0}=\frac{1}{2}(\mathcal{N}v_{\infty}^{\perp}+(\sigma_{\infty},v_{\infty}^{T})\mathcal{N}v_{\infty}^{\perp}-(\mathcal{N}v_{\infty}^{\perp},v_{\infty}^{T})\sigma_{\infty})
=\frac{1}{2}(v_{\infty}^{\perp},\mathcal{N}v_{\infty}^{T})\sigma_{\infty}=0.$$
Hence $p(\lambda)$ is a polynomial conserved quantity of
$(\Lambda,q_{\infty})$ with parallel top term.

Conversely, assuming that a constrained Willmore surface
$(\Lambda,q)$ admits a polynomial conserved quantity
$p(\lambda):=p_{-1}\lambda^{-1}+p_{0}+p_{1}\lambda$ of type $1$, with $p_{1}$ parallel and $p(1)=v_{\infty}$, we obtain also $p_{-1}=\overline{p_{1}}$ parallel, and therefore $$\mathcal{D}v_{\infty}^{\perp}=\mathcal{D}(p_{-1}+p_{1})=0.$$ Consequently, $\sigma_{\infty}$ has parallel mean curvature vector.
\end{proof}
\end{theorem}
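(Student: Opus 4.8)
The plan is to prove the two implications separately, relying on Proposition~\ref{multiplier}, Proposition~\ref{pcqtype1}, and the observation recorded with equation~\eqref{QHvinftperp} that $\mathbf{H}$ is parallel precisely when $\mathcal{D}v_{\infty}^{\perp}=0$. The arithmetic backbone throughout is the decomposition of $p(1)=v_{\infty}$ dictated by the parity conditions \eqref{eq:3} for $d=1$: since $p_{\pm1}\in\Gamma(S^{\perp})$ and $p_{0}\in\Gamma(S)$, the $S$-part of $p(1)$ is $p_{0}=v_{\infty}^{T}$ and the $S^{\perp}$-part is $p_{-1}+p_{1}=v_{\infty}^{\perp}$.

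For the direction from parallel mean curvature to the conserved quantity, I would start from a surface $\sigma_{\infty}$ with parallel mean curvature vector in $S_{v_{\infty}}$, so $\mathcal{D}v_{\infty}^{\perp}=0$, and invoke Proposition~\ref{multiplier} to equip $\Lambda$ with the multiplier $q_{\infty}=\tfrac{1}{2}\sigma_{\infty}\wedge\mathcal{N}v_{\infty}^{\perp}$. The natural candidate is the symmetric Laurent polynomial
$$p(\lambda):=\tfrac{1}{2}v_{\infty}^{\perp}\lambda^{-1}+v_{\infty}^{T}+\tfrac{1}{2}v_{\infty}^{\perp}\lambda,$$
whose coefficients $p_{\pm1}=\tfrac{1}{2}v_{\infty}^{\perp}\in\Gamma(S^{\perp})$ and $p_{0}=v_{\infty}^{T}\in\Gamma(S)$ meet the structural conditions \eqref{polynomial} (reality is automatic since $v_{\infty}^{\perp}$ is real, and $p(1)=v_{\infty}\neq0$), and whose top term is parallel by construction. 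I would then check the three equations of Proposition~\ref{pcqtype1}: the first, $\D p(1)=\D v_{\infty}=0$, is immediate from the constancy of $v_{\infty}$; the second, $\mathcal{D}^{0,1}p_{1}=0$, follows directly from $\mathcal{D}v_{\infty}^{\perp}=0$.

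The main work, and the step I expect to be the crux, is the third equation $\mathcal{N}^{1,0}p_{1}+(q_{\infty})^{1,0}p_{0}=0$. Here I would first extract from $\D v_{\infty}=0$, by projecting onto $S$ and $S^{\perp}$, the identity $\mathcal{N}v_{\infty}^{T}=-\mathcal{D}v_{\infty}^{\perp}=0$. I would then evaluate $q_{\infty}$ on $p_{0}=v_{\infty}^{T}$ through the wedge action $(\sigma_{\infty}\wedge\mathcal{N}v_{\infty}^{\perp})v_{\infty}^{T}=(\sigma_{\infty},v_{\infty}^{T})\mathcal{N}v_{\infty}^{\perp}-(\mathcal{N}v_{\infty}^{\perp},v_{\infty}^{T})\sigma_{\infty}$, using $(\sigma_{\infty},v_{\infty}^{T})=(\sigma_{\infty},v_{\infty})=-1$ and observing that the cross term vanishes, since $(\mathcal{N}v_{\infty}^{\perp},v_{\infty}^{T})=-(v_{\infty}^{\perp},\mathcal{N}v_{\infty}^{T})=0$. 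This gives $q_{\infty}v_{\infty}^{T}=-\tfrac{1}{2}\mathcal{N}v_{\infty}^{\perp}$, whose $(1,0)$-part exactly cancels $\mathcal{N}^{1,0}p_{1}=\tfrac{1}{2}\mathcal{N}^{1,0}v_{\infty}^{\perp}$, yielding the required vanishing and, by Proposition~\ref{pcqtype1}, establishing $p(\lambda)$ as a polynomial conserved quantity of type $1$ with parallel top term.

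For the converse, I would suppose $(\Lambda,q)$ carries a type $1$ polynomial conserved quantity $p(\lambda)=p_{-1}\lambda^{-1}+p_{0}+p_{1}\lambda$ with parallel top term $p_{1}$ and $p(1)=v_{\infty}$. The reality condition \eqref{eq:1} gives $p_{-1}=\overline{p_{1}}$, so conjugating $\mathcal{D}p_{1}=0$ yields $\mathcal{D}p_{-1}=0$ as well. Reading off the $S^{\perp}$-component of $p(1)$ as $p_{-1}+p_{1}=v_{\infty}^{\perp}$, I conclude $\mathcal{D}v_{\infty}^{\perp}=\mathcal{D}(p_{-1}+p_{1})=0$, which by the consequence of \eqref{QHvinftperp} is precisely the condition that $\mathbf{H}$ be parallel. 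No further computation is needed here; the only point demanding care is correctly identifying the $S^{\perp}$-part of $p(1)$ from the parity conditions \eqref{eq:3}.
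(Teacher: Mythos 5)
Your proposal is correct and takes essentially the same route as the paper's proof: the same candidate polynomial $p(\lambda)=\tfrac{1}{2}v_{\infty}^{\perp}\lambda^{-1}+v_{\infty}^{T}+\tfrac{1}{2}v_{\infty}^{\perp}\lambda$ with the multiplier $q_{\infty}$ from Proposition~\ref{multiplier}, the same key identities $\mathcal{N}v_{\infty}^{T}=-\mathcal{D}v_{\infty}^{\perp}=0$ and $(\mathcal{N}v_{\infty}^{\perp},v_{\infty}^{T})=-(v_{\infty}^{\perp},\mathcal{N}v_{\infty}^{T})=0$ to verify the conditions of Proposition~\ref{pcqtype1}, and the identical converse via $v_{\infty}^{\perp}=p_{-1}+p_{1}$ and reality of the coefficients. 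The only cosmetic difference is that you organize the crux computation as evaluating $q_{\infty}p_{0}=-\tfrac{1}{2}\mathcal{N}v_{\infty}^{\perp}$ and then taking $(1,0)$-parts, whereas the paper computes the full sum $\mathcal{N}p_{1}+q_{\infty}p_{0}=0$ directly; the content is the same.
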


\begin{remark}\label{realityoftopterm}
In the proof of Theorem \ref{characterisation of H-parallel surfaces}, we verified, in particular, that, if $\Lambda$ has parallel mean curvature vector in some space-form, then $\Lambda$ admits a polynomial conserved quantity of type $1$ with real top term, given by (\ref{polynomial_infty}). Conversely, given a polynomial conserved quantity $p(\lambda):=p_{-1}\lambda^{-1}+p_{0}+p_{1}\lambda$ of type $1$, with $p_{1}$ real and $p(1)=v_{\infty}$, of a constrained Willmore surface $(\Lambda,q)$, we have $\mathcal{D}^{0,1}p_{1}=0$. Since $p_{1}$ is
real, we get also $\mathcal{D}^{1,0}p_{1}=0$ and then $p_{1}$ parallel. We conclude that $\Lambda$ has parallel mean curvature vector in $S_{v_{\infty}}$ if and only if it is a constrained Willmore surface admitting a polynomial conserved quantity of type $1$ with real top term and $p(1)=v_{\infty}$.
\end{remark}

\begin{corollary} $\Lambda$ is a constrained Willmore surface admitting a polynomial conserved
quantity $p(\lambda)$ of type $1$ with imaginary top term and $p(1)=v_{\infty}$ if and only if the surface $\sigma_{\infty}$ defined by $\Lambda$ in
$S_{v_{\infty}}$ is minimal.
\begin{proof}
First note that $\mathbf{H}=0$ if and only if $v_{\infty}^{\perp}=0$, by \eqref{QHvinftperp}. Considering now a polynomial conserved quantity $p(\lambda)$ of type $1$ with imaginary top term and $p(1)=v_{\infty}$, we get $v_{\infty}^{\perp}=\overline{p_{1}}+p_{1}=0$ and then $\sigma_{\infty}$ minimal. Conversely, if $\sigma_{\infty}$ is minimal then, according to the proof of Theorem \ref{characterisation of H-parallel surfaces}, $(\Lambda, q_{\infty})$ is a constrained Willmore surface, for $q_{\infty}=0$, admitting $p(\lambda)=v_{\infty}^{T}$ as a polynomial conserved quantity of type $1$.
\end{proof}
\end{corollary}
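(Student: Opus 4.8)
The plan is to reduce the whole equivalence to the bundle isomorphism $\mathcal{Q}$ and the identity \eqref{QHvinftperp}, which already records that $\mathbf{H}=0$ exactly when $v_{\infty}^{\perp}=0$; what remains is to translate the condition ``imaginary top term'' into the vanishing of $v_{\infty}^{\perp}$. First I would extract the decomposition of $v_{\infty}=p(1)$ forced by the conditions \eqref{polynomial} for $d=1$. Since \eqref{eq:3} places $p_{0}\in\Gamma(S)$ and \eqref{eq:2}, \eqref{eq:3} place $p_{\pm1}\in\Gamma(S^{\perp})$, projecting $p(1)=p_{-1}+p_{0}+p_{1}$ onto $S$ and $S^{\perp}$ yields $v_{\infty}^{T}=p_{0}$ and $v_{\infty}^{\perp}=p_{1}+p_{-1}=2\,\Ree(p_{1})$, where $p_{-1}=\overline{p_{1}}$ by \eqref{eq:1}.

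For the forward implication I would simply note that an imaginary top term means $\Ree(p_{1})=0$, so the displayed formula gives $v_{\infty}^{\perp}=0$; then $\mathbf{H}=0$ by \eqref{QHvinftperp} and $\sigma_{\infty}$ is minimal.

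For the converse, assuming $\sigma_{\infty}$ minimal, I would run \eqref{QHvinftperp} backwards to get $v_{\infty}^{\perp}=0$, so that $v_{\infty}=v_{\infty}^{T}\in\Gamma(S)$, and then exhibit an explicit conserved quantity. A minimal surface has (trivially parallel) vanishing mean curvature vector, so Proposition \ref{multiplier} applies and its multiplier degenerates to $q_{\infty}=\tfrac12\,\sigma_{\infty}\wedge\mathcal{N}v_{\infty}^{\perp}=0$; hence $(\Lambda,q_{\infty})$ is a constrained Willmore surface with zero multiplier, as already observed in the proof of Theorem \ref{characterisation of H-parallel surfaces}. I would then take the constant candidate $p(\lambda):=v_{\infty}^{T}=v_{\infty}$ and verify it is a polynomial conserved quantity of type $1$: its only nonzero coefficient $p_{0}=v_{\infty}\in\Gamma(S)$ is real, the top term $p_{1}=0$ is (vacuously) imaginary, and $\D^{\lambda}_{q_{\infty}}v_{\infty}=0$ for all $\lambda$ because $\D v_{\infty}=0$ together with $v_{\infty}\in\Gamma(S)$ forces $\mathcal{D}v_{\infty}=0=\mathcal{N}v_{\infty}$.

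The one delicate point, which I would treat as the main obstacle, is the degenerate nature of the converse: the conserved quantity produced has vanishing leading coefficient, so I must confirm it still legitimately qualifies as ``type $1$ with imaginary top term'' under Definition \ref{definition}. Here conditions \eqref{eq:2} and \eqref{eq:3} hold vacuously for $p_{1}=0$, the reality of $p_{0}$ gives \eqref{eq:1}, and $p(1)=v_{\infty}\neq0$ secures \eqref{eq:4}; everything else is a direct substitution into the flatness equation \eqref{eq:pcq}.
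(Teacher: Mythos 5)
Your proposal is correct and takes essentially the same route as the paper: the forward direction via $v_{\infty}^{\perp}=p_{1}+\overline{p_{1}}=2\Ree(p_{1})=0$ combined with \eqref{QHvinftperp}, and the converse by invoking Proposition \ref{multiplier} with $q_{\infty}=0$ and exhibiting the degenerate conserved quantity $p(\lambda)=v_{\infty}^{T}$. The only difference is presentational: you spell out the verification that the constant polynomial with vanishing top coefficient still satisfies the type-$1$ conditions of Definition \ref{definition}, which the paper leaves implicit.
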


In codimension $1$, Theorem \ref{characterisation of H-parallel surfaces} reads that the constant mean curvature surfaces in
some space-form are exactly the constrained Willmore surfaces which
admit a polynomial conserved quantity of type $1$ with parallel top term.
Actually, the extra condition of parallel top term can be omitted:

\begin{theorem} \label{cmcH}
Let $n=3$. Then $\Lambda$ is a constrained Willmore surface admitting a polynomial conserved
quantity $p(\lambda)$ of type $1$ with $p(1)=v_{\infty}$ if and only if the surface $\sigma_{\infty}$ defined by $\Lambda$ in
$S_{v_{\infty}}$ is a constant mean curvature surface. Furthermore, if $p(\lambda)=p_{-1}\lambda^{-1}+p_{0}+p_{1}\lambda$ is a polynomial conserved quantity of type $1$ of $\Lambda$ with $p(1)=v_{\infty}$, then the constant mean curvature $H$ of $\Lambda$ in $S_{v_{\infty}}$ satisfies $$H^{2}=(v_{\infty}^{\perp},v_{\infty}^{\perp})=4(\Ree(p_{1}),\Ree(p_{1})).$$
\begin{proof}
Suppose that $\Lambda$ is a constrained Willmore
surface admitting a polynomial conserved quantity
$p(\lambda)=p_{-1}\lambda^{-1}+p_{0}+p_{1}\lambda$ of type $1$.
Since $S^{\perp}$ has rank $1$, consider a unit parallel section $N$ of $S^{\perp}$. Take
$\beta\in\Gamma(\mathbb{C})$ such that $p_{1}=\beta N$. Since
$\beta^{2}=(p_{1},p_{1})$ is constant, we conclude that $\beta$ is constant, which implies that $p_1=\beta N$ is parallel.

Finally, taking into account that $v_{\infty}^{\perp}=-HN$, with
$N=H\sigma_{\infty}+\xi$, $\xi\in\Gamma(V_{v_{\infty}}^{\perp})$ unitary and
$H$ the constant mean curvature of $\sigma_{\infty}$ with respect to $\xi$, we get
automatically $H^{2}=(v_{\infty}^{\perp},v_{\infty}^{\perp})$.
 \end{proof}
\end{theorem}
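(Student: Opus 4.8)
The plan is to reduce the statement to Theorem~\ref{characterisation of H-parallel surfaces}, whose forward direction already treats constrained Willmore surfaces carrying a type~$1$ polynomial conserved quantity \emph{with parallel top term}. The whole content of the codimension~$1$ case is that this extra hypothesis becomes automatic, and the reason is a rigidity of the bundle $S^{\perp}$: when $n=3$ the ambient space is $\R^{4,1}$ and the central sphere congruence $S$ has signature $(3,1)$, so $S^{\perp}$ is a positive-definite \emph{line} bundle, on which $\mathcal{D}$ restricts to a metric connection. Any unit section $N$ of such a bundle is parallel---differentiating $(N,N)=1$ gives $(\mathcal{D}N,N)=0$, and since $\mathcal{D}N\in\Gamma(S^{\perp})=\langle N\rangle$ is forced to be both proportional and orthogonal to $N$, it vanishes. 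I would record this observation first, as it underlies everything.

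For the converse I would argue directly. If $\sigma_{\infty}$ has constant mean curvature in $S_{v_{\infty}}$ then, its normal bundle $V_{v_{\infty}}^{\perp}$ being a positive-definite line bundle as well, the same rigidity applied to a unit normal $\xi$ gives $\nabla^{V_{v_{\infty}}^{\perp}}\xi=0$; hence $\nabla^{V_{v_{\infty}}^{\perp}}\mathbf{H}=(\der H)\,\xi$ vanishes because $H$ is constant, so the mean curvature vector is parallel. Theorem~\ref{characterisation of H-parallel surfaces} then supplies a polynomial conserved quantity of type~$1$ with parallel top term and $p(1)=v_{\infty}$, which is in particular one of type~$1$ with $p(1)=v_{\infty}$.

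For the forward direction I would start from an arbitrary type~$1$ polynomial conserved quantity $p(\lambda)=p_{-1}\lambda^{-1}+p_{0}+p_{1}\lambda$ of $(\Lambda,q)$ and prove that its top term is necessarily parallel, so that Theorem~\ref{characterisation of H-parallel surfaces} again applies. Since $p_{1}\in\Gamma(S^{\perp})$ by \eqref{eq:2}, I write $p_{1}=\beta N$ for a unit parallel section $N$ and a complex function $\beta$. Proposition~\ref{consequences}(6) says that $(p(\lambda),p(\lambda))$ has constant coefficients; its coefficient of $\lambda^{2}$ is $(p_{1},p_{1})=\beta^{2}(N,N)=\beta^{2}$, so $\beta^{2}$ is constant. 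A continuous function with constant square takes values in a discrete set, hence $\beta$ is constant on the connected domain and $p_{1}=\beta N$ is parallel. Theorem~\ref{characterisation of H-parallel surfaces} now yields that $\sigma_{\infty}$ has parallel mean curvature vector, which in codimension~$1$ is exactly constant mean curvature.

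For the mean-curvature formula I would combine two computations. In codimension~$1$ the normal bundle is spanned by a unit $\xi$ and $\mathbf{H}=H\xi$; substituting into $\mathcal{Q}\mathbf{H}=-v_{\infty}^{\perp}$ from \eqref{QHvinftperp} gives $v_{\infty}^{\perp}=-HN$ with $N:=H\sigma_{\infty}+\xi$ a unit section of $S^{\perp}$, whence $(v_{\infty}^{\perp},v_{\infty}^{\perp})=H^{2}$. On the other hand, the parity rule \eqref{eq:3} puts $p_{\pm1}\in\Gamma(S^{\perp})$ and $p_{0}\in\Gamma(S)$, so the $S^{\perp}$-component of $v_{\infty}=p(1)$ is $v_{\infty}^{\perp}=p_{-1}+p_{1}=\overline{p_{1}}+p_{1}=2\,\Ree(p_{1})$ by \eqref{eq:1}, giving $(v_{\infty}^{\perp},v_{\infty}^{\perp})=4(\Ree(p_{1}),\Ree(p_{1}))$. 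Equating the two expressions yields the claimed identity. The only genuinely substantive step is the parallelism of the top term---precisely where codimension~$1$ enters through the automatic parallelism of unit sections of $S^{\perp}$; once that is in hand the remainder is bookkeeping and appeals to Theorem~\ref{characterisation of H-parallel surfaces}.
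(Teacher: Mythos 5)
Your proposal is correct and follows essentially the same route as the paper: both reduce to Theorem~\ref{characterisation of H-parallel surfaces} by writing $p_{1}=\beta N$ for a unit parallel section $N$ of the line bundle $S^{\perp}$, deducing that $\beta$ is constant from the constancy of $(p_{1},p_{1})=\beta^{2}$ (Proposition~\ref{consequences}), and obtaining the formula from $v_{\infty}^{\perp}=-HN$ with $N=H\sigma_{\infty}+\xi$. You merely make explicit some details the paper leaves implicit, namely the rigidity argument showing unit sections of positive-definite line bundles are parallel and the bookkeeping for the converse direction.
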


\section{Transformations of special constrained Willmore surfaces}

The class of constrained Willmore surfaces of any given type is preserved by both
spectral deformation and B\"{a}cklund transformation, for special
choices of parameters, as we shall see next. Both constrained Willmore spectral deformation and B\"{a}cklund transformation prove to preserve also the parallelism of the top term of a polynomial conserved quantity. For the particular case of type $1$, this defines  transformations of surfaces with parallel mean curvature into new ones (and, in the particular case of codimension $1$, transformations of surfaces with constant mean curvature vector into new ones).

Let $(\Lambda,q)$ be a special constrained Willmore surface of type $d$.

\begin{theorem}\label{specCWwcq}
Let $\mu$ be in $S^{1}$ and
$\phi^{\mu}_{q}:(\underline{\R}^{n+1,1},\D^{\mu}_{q})\rightarrow
(\underline{\R}^{n+1,1},\D)$ be an isometry of bundles, preserving connections. Suppose that
$p(\lambda)$ is a polynomial conserved quantity of type $d$ of
$(\Lambda,q)$ with $p(\mu)$ non-zero. Then $\phi^{\mu}_{q}p(\mu\lambda)$ is a polynomial
conserved quantity of type $d$ of the spectral deformation
$(\phi^{\mu}_{q}\Lambda, \mathrm{Ad}_{\phi^{\mu}_{q}}(q_{\mu}))$, of
parameter $\mu$, of $\Lambda$.
\end{theorem}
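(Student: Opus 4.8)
The plan is to reduce everything to a single conjugation identity relating the family $\D^{\lambda}_{q}$ of $\Lambda$ to the corresponding family of its spectral deformation, and then to transport the parallel section $p(\mu\lambda)$ along $\phi^{\mu}_{q}$. Write $g:=\phi^{\mu}_{q}$ and recall that, by definition, $g$ intertwines connections, $g\circ\D^{\mu}_{q}=\D\circ g$, equivalently $g\,\D^{\mu}_{q}\,g^{-1}=\D$.

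First I would record the fundamental relation
$$g\,\D^{\mu\lambda}_{q}\,g^{-1}=\D^{\lambda,\mathrm{Ad}_{g}(q_{\mu})}_{gS},\qquad\lambda\in\C\backslash\{0\},$$
which is exactly the assertion that the spectral deformation $(g\Lambda,\mathrm{Ad}_{g}(q_{\mu}))$ is a constrained Willmore surface with central sphere congruence $gS$. To obtain it by hand, substitute $\lambda\mapsto\mu\lambda$ in \eqref{eq:dlamdaqV}, write $\D^{\mu\lambda}_{q}=\D^{\mu}_{q}+A(\lambda)$ with $A(\lambda)$ the remaining ($\lambda$-dependent) terms, and conjugate: since $g\,\D^{\mu}_{q}\,g^{-1}=\D$, one gets $g\,\D^{\mu\lambda}_{q}\,g^{-1}=\D+\mathrm{Ad}_{g}(A(\lambda))$. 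Collecting powers of $\lambda$ identifies the deformed data as $\hat{\mathcal N}^{1,0}=\mu\,\mathrm{Ad}_{g}\mathcal N^{1,0}$, $\hat{\mathcal N}^{0,1}=\mu^{-1}\mathrm{Ad}_{g}\mathcal N^{0,1}$ and the multiplier $\mathrm{Ad}_{g}(q_{\mu})$ with $q_{\mu}:=\mu^{2}q^{1,0}+\mu^{-2}q^{0,1}$, while the $\lambda^{0}$ term forces the diagonal part of $\D$ with respect to $gS\oplus gS^{\perp}$ to be $\D-\hat{\mathcal N}$, i.e. $gS$ to be the central sphere congruence of $g\Lambda$. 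Note $q_{\mu}$ is real, since $q^{0,1}=\overline{q^{1,0}}$ and $\overline{\mu^{2}}=\mu^{-2}$.

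Granting this, set $s(\lambda):=g\,p(\mu\lambda)$. Using \eqref{eq:pcq} for $(\Lambda,q)$ at the parameter $\mu\lambda$,
$$\D^{\lambda,\mathrm{Ad}_{g}(q_{\mu})}_{gS}\,s(\lambda)=g\,\D^{\mu\lambda}_{q}\,g^{-1}\,g\,p(\mu\lambda)=g\big(\D^{\mu\lambda}_{q}p(\mu\lambda)\big)=0,$$
so $s(\lambda)$ is parallel for the deformed family, for every $\lambda\in\C\backslash\{0\}$.

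It remains to verify that $s(\lambda)=\sum_{k=-d}^{d}s_{k}\lambda^{k}$, with $s_{k}:=\mu^{k}\,g(p_{k})$, satisfies \eqref{polynomial} relative to $gS$. For the reality \eqref{eq:1} I would use that $\mu\in S^{1}$ gives $\overline{\mu^{k}}=\mu^{-k}$ and that $g$ is real (as $\D^{\mu}_{q}$ is real for $\mu\in S^{1}$), whence $\overline{s_{k}}=\mu^{-k}g(\overline{p_{k}})=\mu^{-k}g(p_{-k})=s_{-k}$. Since $g$ is orthogonal, it carries $S\mapsto gS$ and $S^{\perp}\mapsto gS^{\perp}$, so $s_{k}\in\Gamma((gS)^{\perp})$ precisely when $p_{k}\in\Gamma(S^{\perp})$, which gives \eqref{eq:2} and the parity condition \eqref{eq:3}; and \eqref{eq:4} holds because $s(1)=g\,p(\mu)\neq0$, which is exactly where the hypothesis $p(\mu)\neq0$ (with $g$ injective) is used. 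This exhibits $s(\lambda)$ as a polynomial conserved quantity of type $d$ of $(g\Lambda,\mathrm{Ad}_{g}(q_{\mu}))$. The main obstacle is not the transport, which is immediate once the conjugation identity is in place, but pinning down that identity — in particular that the central sphere congruence of the spectral deformation is $gS$ and that its multiplier is $\mathrm{Ad}_{g}(q_{\mu})$.
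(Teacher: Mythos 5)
Your proposal is correct and follows essentially the same route as the paper: both rest on the identity $\D^{\lambda,\mathrm{Ad}_{\phi^{\mu}_{q}}(q_{\mu})}_{\phi^{\mu}_{q}\Lambda}=\phi^{\mu}_{q}\circ\D^{\mu\lambda,q}_{\Lambda}\circ(\phi^{\mu}_{q})^{-1}$ (which the paper states as a consequence of connection-preservation and the scaling structure $(\D^{\mu,q})^{\lambda,q_{\mu}}=\D^{\mu\lambda,q}$, and which you re-derive by collecting powers of $\lambda$), followed by transporting $p(\mu\lambda)$ and checking the conditions \eqref{polynomial} exactly as the paper does, via reality of $\phi^{\mu}_{q}$, unitarity of $\mu$, the isometry property giving $(\phi^{\mu}_{q}S)^{\perp}=\phi^{\mu}_{q}S^{\perp}$ together with $S_{\phi^{\mu}_{q}\Lambda}=\phi^{\mu}_{q}S_{\Lambda}$, and $p(\mu)\neq 0$.
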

\begin{proof}
Write $p(\lambda)=\sum_{k=-d}^d p_{k}\lambda^{k}$.
By hypothesis,
$$v_{\infty}^{\mu}:=\phi^{\mu}_{q}(\sum_{k=-d}^d p_{k}\mu^{k})=\phi^{\mu}_{q}p(\mu)$$
is non-zero. On the other hand, as $\phi^{\mu}_{q}$ is real and
$\mu$ is unit, we have
$$\mu^{-k}\phi^{\mu}_{q}p_{-k}=\overline{\mu^{k}\,
\phi^{\mu}_{q}p_{k}}.$$ Having in consideration that $\phi^{\mu}_{q}$
is an isometry, and, in particular,
$(\phi^{\mu}_{q}S)^{\perp}=\phi^{\mu}_{q}S^{\perp}$, and that $$S_{\phi^{\mu}_{q}\Lambda}=\phi^{\mu}_{q}S_{\Lambda},$$ we conclude
that
$$\phi^{\mu}_{q}p(\mu\lambda)=\sum_{k=-d}^d(\mu^{k}\phi^{\mu}_{q}p_{k})\lambda^{k}$$ is of the right form.
The fact that
$\phi^{\mu}_{q}:(\underline{\R}^{n+1,1},\D^{\mu}_{q})\rightarrow
(\underline{\R}^{n+1,1},\D)$ preserves connections, and,
consequently,
$$\D^{\lambda,\mathrm{Ad}_{\phi^{\mu}_{q}}(q_{\mu})}_{\phi^{\mu}_{q}\Lambda}=\phi^{\mu}_{q}\circ
(\D^{\mu,q}_{\Lambda})^{\lambda,q_{\mu}}_{\Lambda}\circ
(\phi^{\mu}_{q})^{-1}=\phi^{\mu}_{q}\circ
\D^{\mu\lambda,q}_{\Lambda}\circ (\phi^{\mu}_{q})^{-1},$$ for all $\lambda\in \C\backslash\{0\}$, completes
the proof.
\end{proof}

B\"{a}cklund transformations of constrained Willmore surfaces
preserve the existence of a polynomial conserved quantity of the
same type, in the following terms:
\begin{theorem}\label{BTsofCMCs}
Suppose $p(\lambda)$ is a polynomial conserved quantity of type $d$
of $(\Lambda,q)$. Suppose $\alpha,L$ are B\"{a}cklund
transformation parameters to $(\Lambda,q)$ with
\begin{equation}\label{eq:palhopahaortogLalpha}
p(\alpha)\perp \overline{L}.
\end{equation}
Then
$$\hat{p}(\lambda):=r(1)^{-1}r(\overline{\lambda}\,^{-1})p(\lambda)$$
is a polynomial conserved quantity of type $d$ of the B\"{a}cklund
transform $(\hat{\Lambda},\hat{q})$ of $(\Lambda,q)$ of parameters
$\alpha,L$.
\end{theorem}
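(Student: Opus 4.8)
The proof will hinge on a single structural fact: the dressing $r$ intertwines the two associated families of flat connections. I would first record that, with $\Phi(\lambda):=r(1)^{-1}r(\overline{\lambda}^{-1})$, the B\"{a}cklund construction of Theorem \ref{BT} produces
\[
\D^{\lambda,\hat{q}}_{\hat{\Lambda}}=\Phi(\lambda)\circ\D^{\lambda}_{q}\circ\Phi(\lambda)^{-1},\qquad\lambda\in\C\backslash\{0\},
\]
this being precisely the content of the Terng--Uhlenbeck dressing underlying the transform (the twist $\overline{\lambda}^{-1}$, rather than $\lambda$, reflects the reality $\overline{\D^{\lambda}_{q}}=\D^{\overline{\lambda}^{-1}}_{q}$ of the family, which is also why $\hat{\alpha}=\overline{\alpha}^{-1}$ and $\tilde{L},\overline{L}$ enter $r$ in conjugate pairs). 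Granting this, $\D^{\lambda}_{q}p(\lambda)=0$ immediately gives $\D^{\lambda,\hat{q}}_{\hat{\Lambda}}\,\hat{p}(\lambda)=0$ for $\hat{p}(\lambda)=\Phi(\lambda)p(\lambda)$, which is exactly the stated Laurent expression. All that then remains is to check that $\hat{p}$ satisfies the conditions \eqref{polynomial} for type $d$.

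Three of these I expect to fall out cleanly from symmetry. Since $\Phi(1)=r(1)^{-1}r(1)=I$, we get $\hat{p}(1)=p(1)$, a non-zero real vector by Proposition \ref{consequences}; this secures \eqref{eq:4} and, incidentally, shows the space-form $S_{p(1)}$ is preserved. For the reality condition \eqref{eq:1}, I would use $\overline{p(\lambda)}=p(\overline{\lambda}^{-1})$ (its coefficientwise form for $p$) together with the reality of $r$ --- the same reality that makes $(\hat{\Lambda},\hat{q})$ a genuine real surface in Theorem \ref{BT} --- to deduce $\overline{\hat{p}(\lambda)}=\hat{p}(\overline{\lambda}^{-1})$. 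For the subbundle conditions \eqref{eq:2}--\eqref{eq:3}, I would exploit \eqref{eq:rhopq}: applied factorwise it gives $r(-\mu)=\rho\,r(\mu)\,\rho$, whence, combined with the $\rho$-symmetry $p(-\lambda)=(-1)^{d+1}\rho\,p(\lambda)$ that encodes \eqref{eq:2}--\eqref{eq:3} for $p$, one finds
\[
\hat{p}(-\lambda)=(-1)^{d+1}\hat{\rho}\,\hat{p}(\lambda),\qquad \hat{\rho}:=r(1)^{-1}\rho\,r(1).
\]
As $\hat{\rho}$ is reflection across $\hat{S}=r(1)^{-1}S$, this relation is precisely \eqref{eq:2}--\eqref{eq:3} for $\hat{p}$ relative to the transformed central sphere congruence.

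The substantial point, and the one I expect to be the main obstacle, is that $\hat{p}$ be a Laurent polynomial of degree at most $d$. Degree is the easy half: since $r(0)$ and $r(\infty)$ are finite and invertible (the poles of $r$ lie at $\pm\alpha,\pm\hat{\alpha}$, all distinct from $0,\infty$ and from one another because $|\alpha|\neq1$), the factor $r(\overline{\lambda}^{-1})$ stays bounded as $\lambda\to0,\infty$, so $\hat{p}$ is no worse there than $p$ and its exponents lie in $\{-d,\dots,d\}$. The genuine difficulty is that $r(\overline{\lambda}^{-1})$ has poles at $\lambda\in\{\pm\alpha,\pm\hat{\alpha}\}$, and I must show each cancels. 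The plan is to compute the residue at each of the four points and reduce, via the reality $\overline{p(\lambda)}=p(\overline{\lambda}^{-1})$, the parity relation $p(-\lambda)=(-1)^{d+1}\rho\,p(\lambda)$, the factor-symmetry \eqref{eq:rhopq}, the orthogonality of the simple factors $p_{\hat{\alpha},\overline{L}}$, and the identity $\tilde{L}=p_{\hat{\alpha},\overline{L}}(\alpha)L$, all four residue conditions to the single hypothesis \eqref{eq:palhopahaortogLalpha}. For instance, at $\lambda=\alpha$ the pole comes from the factor $p_{\hat{\alpha},\overline{L}}$ of $r$ near $\hat{\alpha}$, whose singular part projects onto $\rho\overline{L}$, so cancellation amounts exactly to $p(\alpha)\perp\overline{L}$. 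The poles at $-\alpha,\hat{\alpha},-\hat{\alpha}$ I would then dispatch by carrying $p(\alpha)\perp\overline{L}$ through the three symmetries above, each of which maps the corresponding residue condition back to $p(\alpha)\perp\overline{L}$; verifying that the orthogonality of $p_{\hat{\alpha},\overline{L}}(\alpha)$ lets one pass the condition through the definition of $\tilde{L}$ is the point requiring real care.
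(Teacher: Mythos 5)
Your proof hinges on the claim that the Terng--Uhlenbeck dressing gives
\[
\D^{\lambda,\hat{q}}_{\hat{S}}=\Phi(\lambda)\circ\D^{\lambda}_{q}\circ\Phi(\lambda)^{-1},
\qquad \Phi(\lambda)=r(1)^{-1}r(\overline{\lambda}\,^{-1}),
\]
for \emph{all} $\lambda\in\C\backslash\{0\}$, and that claim is not correct. What the construction behind Theorem \ref{BT} provides is the intertwining by the gauge $\Psi(\lambda):=r(1)^{-1}r(\lambda)$, the parameter of the simple factors matching the parameter of the connection. The reality $\overline{\D^{\lambda}_{q}}=\D^{\overline{\lambda}^{-1}}_{q}$ does not convert one into the other: it yields $\overline{\Psi(\lambda)}=\Phi(\lambda)$, so $\Phi=\Psi$ on $S^{1}$ only. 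Off the circle, conjugation by $\Phi(\lambda)$, whose $\lambda$-dependence is antiholomorphic, cannot reproduce $\D^{\lambda,\hat{q}}_{\hat{S}}$, whose coefficients are Laurent polynomials in $\lambda$; equivalently, your identity would require $r(\lambda)^{-1}r(\overline{\lambda}\,^{-1})$ to be $\D^{\lambda}_{q}$-parallel for each $\lambda$, for which there is no reason. So your one-line deduction of $\D^{\lambda,\hat{q}}_{\hat{S}}\hat{p}(\lambda)=0$ for all $\lambda$ is unjustified. The repair is exactly what the paper does: the correct intertwining gives $\D^{\lambda,\hat{q}}_{\hat{S}}\hat{p}(\lambda)=r(1)^{-1}r(\lambda)\circ\D^{\lambda,q}_{S}p(\lambda)=0$ \emph{only for} $\lambda\in S^{1}$, where $r(\lambda)^{-1}r(\overline{\lambda}\,^{-1})=I$; one then needs to already know that $\hat{p}(\lambda)$ is a Laurent polynomial, so that $\D^{\lambda,\hat{q}}_{\hat{S}}\hat{p}(\lambda)$ is a polynomial in $\lambda$ with infinitely many zeros, hence identically zero. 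In particular the polynomiality of $\hat{p}$, which you rightly call the substantial point, must come \emph{before} conservation, not after, and the logical order of your argument has to be inverted.

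Apart from this, your outline tracks the paper's proof, with one genuinely different and viable idea. The paper kills the poles at $\pm\hat{\alpha}$ by first proving Lemma \ref{rstarvshatrstar}, the refactorisation $r=Kp_{\hat{\alpha},\overline{\tilde{L}}}\,p^{-}_{\alpha,L}$, so that those poles sit in a factor built from $L$ and are cancelled using $p(\hat{\alpha})=\overline{p(\alpha)}\perp L$ (reality \eqref{eq:pinfconjugation}) and parity. You instead keep the factorisation $r=p^{-}_{\alpha,\tilde{L}}p_{\hat{\alpha},\overline{L}}$ and pass the hypothesis through $\tilde{L}=p_{\hat{\alpha},\overline{L}}(\alpha)L$ via orthogonality: indeed, since $p_{\hat{\alpha},\overline{L}}(\alpha)\in\Gamma(O(\underline{\C}^{n+2}))$,
\[
\left(p_{\hat{\alpha},\overline{L}}(\alpha)\,p(\hat{\alpha}),\tilde{L}\right)=\left(p(\hat{\alpha}),L\right)=\overline{\left(p(\alpha),\overline{L}\right)}=0,
\]
which cancels the residue at $\hat{\alpha}$, and parity handles $-\hat{\alpha}$; this legitimately avoids the lemma at that step. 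You do not, however, escape it altogether: your verification of the reality condition \eqref{eq:1} appeals to ``the reality of $r$'', but $r$ is \emph{not} real --- the correct statement is $\overline{r(\lambda)}=K^{-1}r(\overline{\lambda}\,^{-1})$ with $K=p_{\hat{\alpha},\overline{L}}(0)p_{\hat{\alpha},\overline{\tilde{L}}}(0)$, together with $\overline{r(1)}\,^{-1}=r(1)^{-1}K$, which is what makes the twist cancel in $\overline{\Phi(\lambda)}=\Psi(\lambda)$. Proving these is precisely where the paper compares its two factorisations of $r$, i.e.\ uses Lemma \ref{rstarvshatrstar}; that lemma, or an equivalent commutation identity for the simple factors, must therefore still appear in your write-up.
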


To prove the theorem, we start by establishing an alternative expression for the dressing gauge $r$:
\begin{lemma}\label{rstarvshatrstar}
Suppose $\alpha,L$ are B\"{a}cklund
transformation parameters to $(\Lambda,q)$. Then
\begin{equation}\label{eq:rel}
r=Kp_{\hat{\alpha},\overline{\tilde{L}}}p_{\alpha,L}^{-},
\end{equation}
for $$K:=p_{\hat{\alpha},\overline{L}}(0)p_{\hat{\alpha},\overline{\tilde{L}}}(0).$$
\end{lemma}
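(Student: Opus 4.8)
The goal is to establish the identity
\begin{equation*}
r=Kp_{\hat{\alpha},\overline{\tilde{L}}}p_{\alpha,L}^{-},
\qquad K=p_{\hat{\alpha},\overline{L}}(0)p_{\hat{\alpha},\overline{\tilde{L}}}(0),
\end{equation*}
starting from the definition $r=p^{-}_{\alpha,\tilde{L}}p_{\hat{\alpha},\overline{L}}$. The plan is to treat both sides as $\End(\underline{\C}^{n+2})$-valued meromorphic functions of $\lambda$ on the Riemann sphere $\mathbb{P}^{1}$ and to show they agree by comparing their poles, their values at a normalising point, and using the uniqueness that comes from a Liouville-type argument. Concretely, I would first record the pole and zero structure of each simple factor $p_{\beta,M}$ and $p^{-}_{\beta,M}$: from the case definitions, $p_{\beta,M}(\lambda)$ has a simple pole at $\lambda=-\beta$ (coming from the $M$-eigenvalue $\tfrac{\lambda-\beta}{\lambda+\beta}$) and at $\lambda=\beta$ (from the $\rho M$-eigenvalue $\tfrac{\lambda+\beta}{\lambda-\beta}$), and likewise with the overall sign flip for $p^{-}_{\beta,M}$. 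Recall also the identity $p_{\beta,M}(\lambda)=p^{-}_{\beta^{-1},M}(\lambda^{-1})$ already noted in the excerpt, which I will use to pass between the two normalisations.

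The substantive step is to check that $r$ and $K\,p_{\hat{\alpha},\overline{\tilde{L}}}p^{-}_{\alpha,L}$ share the same divisor of poles on $\mathbb{P}^{1}$. On the left, $r=p^{-}_{\alpha,\tilde{L}}p_{\hat{\alpha},\overline{L}}$ has potential poles at $\pm\alpha$ and $\pm\hat{\alpha}$; on the right, $p_{\hat{\alpha},\overline{\tilde{L}}}p^{-}_{\alpha,L}$ has potential poles at $\pm\hat{\alpha}$ and $\pm\alpha$ as well. To see the two products are actually equal (and not merely cofactored), I would verify that they have the same principal parts at each of these four points, which amounts to checking that the residue lines match. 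Here the definitions $\tilde{L}=p_{\hat\alpha,\overline L}(\alpha)L$ and $\hat\alpha=\overline\alpha^{-1}$ are exactly engineered so that the factor performing a pole at a given point maps $L$ (or $\overline L$, $\rho L$, $\tilde L$) to the eigenline that the other product requires; I expect this to be the analogue of the standard dressing computation showing that a product of simple-factor gauges can be re-associated once the lines are matched. The leftover discrepancy at the points where a genuine pole on one side meets a regular value on the other is a constant, and that constant is absorbed into $K$.

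Once the two sides are shown to have identical poles and principal parts, their ratio extends to a holomorphic $\End$-valued function on the compact $\mathbb{P}^{1}$, hence is constant by Liouville; call this constant $K$. To identify $K$ explicitly as $p_{\hat{\alpha},\overline{L}}(0)p_{\hat{\alpha},\overline{\tilde{L}}}(0)$, I would evaluate both sides at a convenient point — the natural choice is $\lambda=0$, where $p^{-}_{\alpha,L}(0)$ and $p_{\hat\alpha,\overline{\tilde L}}(0)$ are computed directly from the case definitions and the reflection relation \eqref{eq:rhopq}. Comparing $r(0)=p^{-}_{\alpha,\tilde{L}}(0)p_{\hat{\alpha},\overline{L}}(0)$ with $K\,p_{\hat\alpha,\overline{\tilde L}}(0)p^{-}_{\alpha,L}(0)$ then pins down $K$ after cancelling the invertible factors.

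The main obstacle I anticipate is the bookkeeping at the coincident poles: verifying that the residue lines of the two products genuinely agree at each of $\pm\alpha,\pm\hat\alpha$ requires tracking how $\tilde L$, $\overline{\tilde L}$, $\rho$, and the conjugation $\overline{(\cdot)}$ interact, and showing that the transformed line $\tilde L=p_{\hat\alpha,\overline L}(\alpha)L$ is precisely the line that makes the factor $p_{\hat\alpha,\overline{\tilde L}}$ on the right produce the same singular behaviour as $p^{-}_{\alpha,\tilde L}$ on the left. This is where the specific choices of Bäcklund parameters enter decisively, and care is needed to confirm that $L$ and $\rho L$ never being orthogonal keeps all the relevant eigenline decompositions well-defined throughout.
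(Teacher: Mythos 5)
Your overall skeleton is the same as the paper's: treat everything as meromorphic $O(\underline{\C}^{n+2})$-valued functions on $\mathbb{P}^{1}$, show that the singularities at $\pm\alpha,\pm\hat{\alpha}$ cancel when $r$ is compared with $p_{\hat{\alpha},\overline{\tilde{L}}}p^{-}_{\alpha,L}$, conclude constancy by a Liouville argument, and identify the constant at $\lambda=0$ (your evaluation there is correct: $p^{-}_{\alpha,L}(0)=p^{-}_{\alpha,\tilde{L}}(0)=I$ and $p_{\hat{\alpha},\overline{\tilde{L}}}(0)$ is an involution, giving $K=p_{\hat{\alpha},\overline{L}}(0)p_{\hat{\alpha},\overline{\tilde{L}}}(0)$, exactly as in the paper). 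The gap lies in the central cancellation step. Your criterion --- same pole divisor plus same principal parts, hence holomorphic ratio, hence constant --- is not a valid tool for matrix-valued loops, and in fact the principal parts do \emph{not} match: at $\lambda=\alpha$ the residue of $r=p^{-}_{\alpha,\tilde{L}}p_{\hat{\alpha},\overline{L}}$ is $-2\alpha\,\pi_{\rho\tilde{L}}\circ p_{\hat{\alpha},\overline{L}}(\alpha)$, while the residue of $p_{\hat{\alpha},\overline{\tilde{L}}}p^{-}_{\alpha,L}$ is $-2\alpha\,p_{\hat{\alpha},\overline{\tilde{L}}}(\alpha)\circ\pi_{\rho L}$ (projections taken with respect to the respective three-term decompositions). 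These are rank-one operators composed with invertible maps on \emph{opposite} sides; showing that the first equals $K$ times the second --- images \emph{and} kernels --- is an operator identity essentially equivalent to the lemma you are trying to prove. Regularity of the product $r\,(p^{-}_{\alpha,L})^{-1}p_{\hat{\alpha},\overline{\tilde{L}}}^{-1}$ at a common pole is an irreducibly two-sided cancellation, and the step that converts the line condition $\tilde{L}=p_{\hat{\alpha},\overline{L}}(\alpha)L$ into that regularity is precisely what you defer to ``the analogue of the standard dressing computation'': that computation \emph{is} the content of the lemma, so as written the proof does not close.

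What the paper uses to close it is the quoted Lemma \ref{TrioDeHolomorfia}: if $\gamma,\hat{\gamma}$ are simple-pole factors and $\xi$ is holomorphic and invertible at the pole with $L_{1}=\xi(0)\hat{L}_{1}$, then $\gamma\xi\hat{\gamma}^{-1}$ is holomorphic \emph{and invertible} there. Applied after a M\"{o}bius change of variable, with $\xi=p_{\hat{\alpha},\overline{L}}^{-1}$ sandwiched between the two $\alpha$-factors, this gives holomorphicity and invertibility of $p^{-}_{\alpha,L}\,p_{\hat{\alpha},\overline{L}}^{-1}\,(p^{-}_{\alpha,\tilde{L}})^{-1}$ away from $\{\pm\hat{\alpha},-\alpha\}$; the equivariance \eqref{eq:rhopq} then transfers regularity from $\alpha$ to $-\alpha$ for free (so one never performs four separate residue checks), a symmetric application of the lemma handles $\pm\hat{\alpha}$, and Liouville plus evaluation at $0$ finish. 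Note also that tracking invertibility, not just holomorphicity, of these extensions is needed, since the argument inverts the partial products along the way. If you replace your principal-part comparison by this two-sided lemma (or prove an equivalent statement), the rest of your plan goes through as you describe.
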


The proof of the lemma we present next will be based on the
following:

\begin{lemma}\cite{Bur06}\label{TrioDeHolomorfia}
Let
$$\gamma(\lambda)=\lambda\,\pi_{L_{1}}+\pi_{L_{0}}+\lambda^{-1}\,\pi_{L_{-1}}$$
and
$$\hat{\gamma}(\lambda)=\lambda\,\pi_{\hat{L}_{1}}+\pi_{\hat{L}_{0}}+\lambda^{-1}\,\pi_{\hat{L}_{-1}}$$ be homomorphisms of $\C^{n+2}$ corresponding to decompositions $$\C^{n+2}=L_{1}\oplus L_{0}\oplus L_{-1}=\hat{L}_{1}\oplus \hat{L}_{0}\oplus
 \hat{L}_{-1}$$ with $L_{\pm 1}$ and $\hat{L}_{\pm 1}$ null lines and
 $L_{0}=(L_{1}\oplus L_{-1})^{\perp}$, $\hat{L}_{0}=(\hat{L}_{1}\oplus
 \hat{L}_{-1})^{\perp}$. Suppose $\mathrm{Ad}\,\gamma$ and $\mathrm{Ad}\,\hat{\gamma}$ have simple poles. Suppose as well that $\xi$ is a map into
 $O(\C^{n+2})$ holomorphic near $0$ such that $L_{1}=\xi
 (0)\hat{L}_{1}$. Then $\gamma\xi\hat{\gamma}^{-1}$ is holomorphic and invertible at $0$.
\end{lemma}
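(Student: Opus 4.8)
The plan is to prove the two assertions separately but to reduce invertibility to holomorphicity using orthogonality. First observe that $\gamma(\lambda),\hat\gamma(\lambda)\in O(\C^{n+2})$ for every $\lambda\neq0$: since $L_{\pm1}$ are null and $L_{0}=(L_{1}\oplus L_{-1})^{\perp}$, a direct check gives $(\gamma(\lambda)v,\gamma(\lambda)w)=(v,w)$, and likewise for $\hat\gamma$. Hence $g(\lambda):=\gamma(\lambda)\xi(\lambda)\hat\gamma(\lambda)^{-1}\in O(\C^{n+2})$ for all $\lambda\neq0$ near $0$. If I manage to show that $g$ extends holomorphically across $\lambda=0$, then the relation $g(\lambda)^{*}g(\lambda)=I$ (adjoints taken with respect to the fixed bilinear form) passes to the limit $\lambda\to0$ and yields $g(0)^{*}g(0)=I$; in particular $g(0)$ is invertible. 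Thus the whole lemma reduces to showing that $g$ has no pole at $0$.

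To analyse the pole I would expand everything near $0$:
$$\gamma=\lambda^{-1}\pi_{L_{-1}}+\pi_{L_{0}}+\lambda\,\pi_{L_{1}},\quad \hat\gamma^{-1}=\lambda^{-1}\pi_{\hat L_{1}}+\pi_{\hat L_{0}}+\lambda\,\pi_{\hat L_{-1}},\quad \xi=\xi_{0}+\lambda\,\xi_{1}+O(\lambda^{2}),$$
with $\xi_{0}=\xi(0)$ and $\xi_{1}=\xi'(0)$. Since $\gamma$ and $\hat\gamma^{-1}$ have simple poles while $\xi$ is holomorphic, $g$ has at most a double pole, and multiplying out the three factors gives
$$g_{-2}=\pi_{L_{-1}}\xi_{0}\pi_{\hat L_{1}},\qquad g_{-1}=\pi_{L_{-1}}\xi_{0}\pi_{\hat L_{0}}+\pi_{L_{0}}\xi_{0}\pi_{\hat L_{1}}+\pi_{L_{-1}}\xi_{1}\pi_{\hat L_{1}}.$$
The hypothesis $L_{1}=\xi_{0}\hat L_{1}$ immediately clears three of these summands: the image of $\xi_{0}\pi_{\hat L_{1}}$ is exactly $L_{1}$, which is contained in both $\ker\pi_{L_{-1}}=L_{1}\oplus L_{0}$ and $\ker\pi_{L_{0}}=L_{1}\oplus L_{-1}$. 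Hence $g_{-2}=0$ and the middle term of $g_{-1}$ vanishes.

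There remain $T_{1}:=\pi_{L_{-1}}\xi_{0}\pi_{\hat L_{0}}$ and $T_{2}:=\pi_{L_{-1}}\xi_{1}\pi_{\hat L_{1}}$, both with image in the null line $L_{-1}$. I would detect vanishing of a vector $u\in L_{-1}$ by pairing with a generator $s_{1}$ of $L_{1}$: the form is non-degenerate on the hyperbolic plane $L_{1}\oplus L_{-1}$, so $u=0$ as soon as $(u,s_{1})=0$; moreover, because $L_{1}$ is null and $L_{0}\perp L_{1}$, one has $(\pi_{L_{-1}}w,s_{1})=(w,s_{1})$ for all $w$. For $T_{1}$ this yields $(T_{1}v,s_{1})=(\xi_{0}\pi_{\hat L_{0}}v,s_{1})=(\pi_{\hat L_{0}}v,\xi_{0}^{-1}s_{1})$, which vanishes because $\xi_{0}^{-1}s_{1}\in\hat L_{1}$ while $\pi_{\hat L_{0}}v\in\hat L_{0}\perp\hat L_{1}$. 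For $T_{2}$ I would first differentiate $\xi^{*}\xi=I$ at $0$ to conclude $A:=\xi_{0}^{-1}\xi_{1}\in o(\C^{n+2})$; then $(T_{2}v,s_{1})=(A\,\pi_{\hat L_{1}}v,\xi_{0}^{-1}s_{1})$ pairs two vectors of the null line $\hat L_{1}$ through the skew endomorphism $A$, hence is zero. Therefore $g_{-1}=0$.

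Combining these, $g$ is holomorphic at $0$, and the orthogonality reduction of the first paragraph promotes this to invertibility of $g(0)$, which proves the lemma. I expect the term $T_{2}=\pi_{L_{-1}}\xi'(0)\pi_{\hat L_{1}}$ to be the one real obstacle: it is the only summand involving the first-order jet of $\xi$, over which the sole hypothesis $L_{1}=\xi_{0}\hat L_{1}$ gives no direct control. The decisive point is that $\xi$ being $O(\C^{n+2})$-valued forces $\xi_{0}^{-1}\xi'(0)$ to be skew, and that this skewness, together with $\hat L_{1}$ being a single null line (so that $\pi_{\hat L_{1}}v$ and $\xi_{0}^{-1}s_{1}$ are collinear), is precisely what annihilates $T_{2}$. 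The simple-pole assumptions on $\mathrm{Ad}\,\gamma$ and $\mathrm{Ad}\,\hat\gamma$ enter only to guarantee that $\gamma$ and $\hat\gamma^{-1}$ have the simple-pole Laurent form used throughout.
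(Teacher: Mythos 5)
Your proof is correct, and since the paper offers no proof of its own --- the lemma is quoted from \cite{Bur06} --- there is nothing for it to diverge from: your computation is the standard one, with the four potentially singular coefficients $\pi_{L_{-1}}\xi(0)\pi_{\hat L_{1}}$, $\pi_{L_{0}}\xi(0)\pi_{\hat L_{1}}$, $\pi_{L_{-1}}\xi(0)\pi_{\hat L_{0}}$ and $\pi_{L_{-1}}\xi'(0)\pi_{\hat L_{1}}$ killed, respectively, by $\xi(0)\hat L_{1}=L_{1}$ (twice), by orthogonality of $\xi(0)$ (which gives $\xi(0)\hat L_{0}\subset\xi(0)\hat L_{1}^{\perp}=L_{1}^{\perp}=\ker\pi_{L_{-1}}$, the same fact your pairing argument encodes), and by skewness of $\xi(0)^{-1}\xi'(0)$ paired against the single null line $\hat L_{1}$, with invertibility at $0$ then recovered from orthogonality in the limit (equivalently, one may apply the same holomorphicity argument to $\hat\gamma\xi^{-1}\gamma^{-1}$ and use that the two values at $0$ are mutually inverse). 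One quibble with your closing remark: the simple-pole Laurent form of $\gamma$ and $\hat\gamma^{-1}$ holds by definition, so the hypothesis that $\mathrm{Ad}\,\gamma$ and $\mathrm{Ad}\,\hat\gamma$ have simple poles is not what licensed your expansions --- for null \emph{lines} it is in fact automatic, since the $\lambda^{\pm2}$-eigenspaces of $\mathrm{Ad}\,\gamma$ on $o(\C^{n+2})$ are $\wedge^{2}L_{\pm1}=0$; that hypothesis carries content only in the more general form of the lemma in \cite{Bur06}, where $L_{\pm1}$ may be isotropic subspaces of higher rank.
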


Next we prove Lemma \ref{rstarvshatrstar}:

\begin{proof}
In view of
$L=p_{\hat{\alpha},\overline{L}}(\alpha)^{-1}\tilde{L}$,
after an appropriate change of variable, we conclude, by Lemma \ref
{TrioDeHolomorfia}, that $p^{-}_{\alpha,L}\,
p_{\hat{\alpha},\overline{L}}^{-1}\,
(p^{-}_{\alpha,\tilde{L}})^{-1}$ admits a holomorphic and
invertible extension to $\mathbb{P}^{1}\backslash\{\pm
\hat{\alpha},-\alpha\}$. On the other hand, in view of \eqref{eq:rhopq}, the holomorphicity and invertibility of
$p^{-}_{\alpha,L}\, p_{\hat{\alpha},\overline{L}}^{-1}\,
(p^{-}_{\alpha,\tilde{L}})^{-1}$ at the points $\alpha$
and $-\alpha$ are equivalent. Thus $p^{-}_{\alpha,L}\,
p_{\hat{\alpha},\overline{L}}^{-1}\,
(p^{-}_{\alpha,\tilde{L}})^{-1}$ admits a holomorphic and
invertible extension to $\mathbb{P}^{1}\backslash\{\pm \hat{\alpha}\}$, and
so does, therefore, $(p^{-}_{\alpha,L}\,
p_{\hat{\alpha},\overline{L}}^{-1}\,
(p^{-}_{\alpha,\tilde{L}})^{-1})^{-1}\,
p_{\hat{\alpha},\overline{L}}^{-1}$. A similar argument shows that
$p^{-}_{\alpha,\tilde{L}}\, (p_{\hat{\alpha},\overline{L}}\,
(p^{-}_{\alpha,L})^{-1}\,
p_{\hat{\alpha},\overline{\tilde{L}}}^{-1})$ admits a holomorphic
extension to $\mathbb{P}^{1}\backslash\{\pm\alpha\}$. But
$$p^{-}_{\alpha,\tilde{L}}\,p_{\hat{\alpha},\overline{L}}\,
(p^{-}_{\alpha,L})^{-1}\,
p_{\hat{\alpha},\overline{\tilde{L}}}^{-1}=(p^{-}_{\alpha,L}\,
p_{\hat{\alpha},\overline{L}}^{-1}\,
(p^{-}_{\alpha,\tilde{L}})^{-1})^{-1}\,
p_{\hat{\alpha},\overline{L}}^{-1}.$$ We conclude that
$p^{-}_{\alpha,\tilde{L}}\,p_{\hat{\alpha},\overline{L}}\,
(p^{-}_{\alpha,L})^{-1}\,
p_{\hat{\alpha},\overline{\tilde{L}}}^{-1}$ extends holomorphically to
$\mathbb{P}^{1}$ and is, therefore, constant. Evaluating at
$\lambda=0$ gives
$$p^{-}_{\alpha,\tilde{L}}\,p_{\hat{\alpha},\overline{L}}\,
(p^{-}_{\alpha,L})^{-1}\,
p_{\hat{\alpha},\overline{\tilde{L}}}^{-1}=p_{\hat{\alpha},\overline{L}}(0)\,p_{\hat{\alpha},
\overline{\tilde{L}}}(0),$$ completing the proof.
\end{proof}

We proceed now to the proof of Theorem \ref{BTsofCMCs}:

\begin{proof}
Consider projections
$\pi_{\overline{L}}:\underline{\C}^{n+2}\rightarrow
\overline{L}$, $\pi_{(\overline{L}\oplus\rho \overline{L})^{\perp}}:\underline{\C}^{n+2}\rightarrow
(\overline{L}\oplus\rho \overline{L})^{\perp}$ and $\pi_{\rho
\overline{L}}:\underline{\C}^{n+2}\rightarrow \rho
\overline{L}$ with respect to the decomposition
$$\underline{\C}^{n+2}=\overline{L}\oplus(\overline{L}\oplus\rho
\overline{L})^{\perp}\oplus\rho
\overline{L}.$$ Since
$\overline{L}$ and $\rho \overline{L}$ are never orthogonal, condition
\eqref{eq:palhopahaortogLalpha} establishes, in particular,
$\pi_{\rho \overline{L}}p(\alpha)=0$. On the other hand, in view of \eqref{eq:2} and \eqref{eq:3}, we have
\begin{equation}\label{eq:rhopinf}
\rho p(\lambda)=(-1)^{d+1}p(-\lambda)
\end{equation}
for all $\lambda$. Hence
$$\pi_{\overline{L}}p(-\alpha)=(-1)^{d+1}\pi_{\overline{L}}\rho p(\alpha)=(-1)^{d+1}\rho\pi_{\rho
\overline{L}}p(\alpha)=0.$$ It follows that
$$p_{\hat{\alpha},\overline{L}}(\overline{\lambda}\,^{-1})\,p(\lambda)=\frac{\overline{\lambda}\,^{-1}-\overline{\alpha}\,^{-1}}{\overline{\lambda}\,^{-1}+\overline{\alpha}\,^{-1}}\,\pi_{\overline{L}}p(\lambda)+\pi_{(\overline{L}\oplus\rho \overline{L})^{\perp}}p(\lambda)+\frac{\overline{\lambda}\,^{-1}+\overline{\alpha}\,^{-1}}{\overline{\lambda}\,^{-1}-\overline{\alpha}\,^{-1}}\,\pi_{\rho
\overline{L}}p(\lambda)$$ has no poles and, therefore, that $$\hat{p}(\lambda)=r(1)^{-1}p^{-}_{\alpha,\tilde{L}}(\overline{\lambda}\,^{-1})p_{\hat{\alpha},\overline{L}}(\overline{\lambda}\,^{-1})\,p(\lambda)$$
has, at most, poles at $\lambda=\pm\alpha$.

Consider now projections
$\pi_{L}:\underline{\C}^{n+2}\rightarrow L$, $\pi_{(L\oplus\rho L)^{\perp}}:\underline{\C}^{n+2}\rightarrow
(L\oplus\rho L)^{\perp}$ and
$\pi_{\rho L}:\underline{\C}^{n+2}\rightarrow\rho
L$ with respect to the
decomposition $$\underline{\C}^{n+2}=L\oplus(L\oplus\rho L)^{\perp}\oplus\rho
L.$$ By \eqref{eq:1}, we have
\begin{equation}\label{eq:pinfconjugation}
\overline{p(\lambda)}=p(\overline{\lambda}\,^{-1}),
\end{equation}
for all $\lambda$. In particular, $\overline{p(\alpha)}=p(\hat{\alpha})$ and, therefore, condition \eqref{eq:palhopahaortogLalpha} establishes $p(\hat{\alpha})\in\Gamma(L^{\perp})$. Since
$L$ and $\rho L$ are never orthogonal, we conclude that
$\pi_{\rho L}p(\hat{\alpha})=0$. Hence, by \eqref{eq:rhopinf}, $$\pi_{L}p(-\hat{\alpha})=(-1)^{d+1}\pi_{L}\rho p(\hat{\alpha})=(-1)^{d+1}\rho\pi_{\rho
L}p(\hat{\alpha})=0.$$
It follows that
$$p^{-}_{\alpha,L}(\overline{\lambda}\,^{-1})\,p(\lambda)=\frac{\alpha-\overline{\lambda}\,^{-1}}{\alpha+\overline{\lambda}\,^{-1}}\,\pi_{L}p(\lambda)+\pi_{(L\oplus\rho L)^{\perp}}p(\lambda)+\frac{\alpha+\overline{\lambda}\,^{-1}}{\alpha-\overline{\lambda}\,^{-1}}\,\pi_{\rho
L}p(\lambda)$$ has no poles and, then, by Lemma \ref{rstarvshatrstar}, that $$\hat{p}(\lambda)=r(1)^{-1}Kp_{\hat{\alpha},\overline{\tilde{L}}}(\overline{\lambda}\,^{-1})p^{-}_{\alpha,L}(\overline{\lambda}\,^{-1})\,p(\lambda)$$
has, at most, poles at $\lambda=\pm\hat{\alpha}$. We conclude that
$\hat{p}(\lambda)$ has no poles. Write $p(\lambda)=\sum_{k=-d}^d p_{k}\lambda^{k}$. The fact that
$$\mathrm{lim}_{\lambda\rightarrow
\infty}\,\lambda^{-d}\hat{p}(\lambda)=r(1)^{-1}\,r(0)\,p_{d}$$
and $$\mathrm{lim}_{\lambda\rightarrow 0}\,\lambda^{d}
\hat{p}(\lambda)=r(1)^{-1}\,r(\infty)\,\overline{p_{d}}$$ are both
finite establishes then $\hat{p}(\lambda)$ as a Laurent polynomial
with degree smaller or equal to $d$.

Now let $\hat{\rho}$ denote reflection across $\hat{S}$. According to \eqref{eq:rhopq}, we have
\begin{equation}\label{eq:rrho}
r(-\lambda)=\rho r(\lambda)\rho^{-1},
\end{equation}
for all $\lambda\in\mathbb{P}^{1}$, so that
$$\hat{\rho}\hat{p}(\lambda)=r(1)^{-1}\rho\, r(1)\hat{p}(\lambda)=r(1)^{-1}\rho
\,r(\overline{\lambda}\,^{-1})\,p(\lambda)=r(1)^{-1}r(-\overline{\lambda}\,^{-1})\rho
p(\lambda)$$ and, therefore, following \eqref{eq:rhopinf},
$$\hat{\rho}\hat{p}(\lambda)=(-1)^{d+1}\hat{p}(-\lambda),$$
showing that the coefficients on $\lambda^{k}$ in $\hat{p}(\lambda)$
are sections of $\hat{S}^{\perp}$ if $k$ has the same parity as $d$, being, otherwise,  sections of $\hat{S}$.

Next we verify that
$\overline{\hat{p}(\lambda)}=\hat{p}(\overline{\lambda}\,^{-1})$,
equivalent to the complex conjugation conditions on the coefficients
in $\hat{p}(\lambda)$. For that, observe that, by Lemma \ref{rstarvshatrstar},
\begin{equation*}
\begin{split}
\overline{r(\lambda)}&=\overline{p^{-}_{\alpha,\tilde{L}}(\lambda)p_{\hat{\alpha},\overline{L}}(\lambda)}\\
&=p^{-}_{\overline{\alpha},\overline{\tilde{L}}}(\overline{\lambda})p_{\alpha^{-1},L}(\overline{\lambda})\\
&=p_{\hat{\alpha},\overline{\tilde{L}}}(\overline{\lambda}\,^{-1})p^{-}_{\alpha,L}(\overline{\lambda}\,^{-1})\\
&=K^{-1}r(\overline{\lambda}\,^{-1}),
\end{split}
\end{equation*}
as well as, on the other hand,
\begin{equation*}
\begin{split}
\overline{r(\lambda)}&=\overline{Kp_{\hat{\alpha},\overline{\tilde{L}}}(\lambda)p^{-}_{\alpha,L}(\lambda)}\\&=\overline{K}p_{\alpha^{-1},\tilde{L}}(\overline{\lambda})p^{-}_{\overline{\alpha},\overline{L}}(\overline{\lambda})\\&=\overline{K}p^{-}_{\alpha,\tilde{L}}(\overline{\lambda}\,^{-1})p_{\hat{\alpha},\overline{L}}(\overline{\lambda}\,^{-1})\\&=\overline{K}r(\overline{\lambda}\,^{-1}),
\end{split}
\end{equation*}
for all $\lambda\in\C\backslash\{0,\pm\alpha\}$. In particular, $\overline{r(1)^{-1}}=r(1)^{-1}K$. The conclusion now follows immediately from \eqref{eq:pinfconjugation}.

Finally, note that
$$\D^{\lambda,\hat{q}}_{\hat{S}}\hat{p}(\lambda)=r(1)^{-1}\circ r(\lambda)\circ{\D}^{\lambda,q}_{S}\circ r(\lambda)^{-1}\circ
r(\overline{\lambda}\,^{-1})\,p(\lambda)=r(1)^{-1}r(\lambda)\circ
\D^{\lambda,q}_{S}\,p(\lambda)=0,$$ for
$\lambda\in S^{1}$, which completes the proof (since $\D^{\lambda,\hat{q}}_{\hat{S}}\hat{p}(\lambda)$ is a polynomial with an infinite number of zeros).
\end{proof}

Following Theorem \ref{specCWwcq} and Theorem \ref{BTsofCMCs}, we have, furthermore:

\begin{theorem}
Both constrained Willmore spectral deformation and B\"{a}cklund transformation preserve the parallelism of the top term of a polynomial conserved quantity, for special choices of parameters.
\end{theorem}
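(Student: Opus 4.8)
The statement to be proved combines two independent preservation facts—one for spectral deformation, one for B\"{a}cklund transformation—so I would prove it by inspecting the explicit polynomial conserved quantities produced in Theorem \ref{specCWwcq} and Theorem \ref{BTsofCMCs} and showing that, in each construction, the transformed top term inherits parallelism from the original. The overall strategy is: recall that the top term of a type-$d$ polynomial conserved quantity is the coefficient $p_d\in\Gamma(S^{\perp})$ and that, by item (3) of Proposition \ref{consequences}, it always satisfies $\mathcal{D}^{0,1}p_d=0$; parallelism of the top term, $\mathcal{D}p_d=0$, is then the extra condition $\mathcal{D}^{1,0}p_d=0$ (equivalently, by reality, $p_d$ real forces parallelism, as noted in Remark \ref{realityoftopterm}). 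So in each case it suffices to check that the transformed top term satisfies $\mathcal{D}^{\text{(new)}}p_d^{\text{(new)}}=0$ with respect to the transformed connection $\mathcal{D}^{\hat{\D}}$ of the deformed/transformed surface.

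\textbf{Spectral deformation.}
Here the new polynomial conserved quantity is $\phi^{\mu}_q\,p(\mu\lambda)=\sum_k(\mu^k\phi^{\mu}_q p_k)\lambda^k$, so its top coefficient is $\mu^d\phi^{\mu}_q p_d$. Since $\phi^{\mu}_q:(\underline{\R}^{n+1,1},\D^{\mu}_q)\to(\underline{\R}^{n+1,1},\D)$ is a connection-preserving isometry intertwining the central sphere congruences, it intertwines the associated $\mathcal{D}$-connections, i.e. $\mathcal{D}_{\phi^{\mu}_q\Lambda}=\phi^{\mu}_q\circ\mathcal{D}_{\Lambda}\circ(\phi^{\mu}_q)^{-1}$ on the relevant summand. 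Therefore $\mathcal{D}_{\phi^{\mu}_q\Lambda}(\mu^d\phi^{\mu}_q p_d)=\mu^d\phi^{\mu}_q(\mathcal{D}_{\Lambda}p_d)$, which vanishes precisely when $\mathcal{D}_{\Lambda}p_d=0$. This direction is essentially immediate once one records the intertwining of the $\mathcal{D}$-connections, which is already implicit in the proof of Theorem \ref{specCWwcq}.

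\textbf{B\"{a}cklund transformation.}
This is the substantive case. The transformed conserved quantity is $\hat{p}(\lambda)=r(1)^{-1}r(\overline{\lambda}^{-1})p(\lambda)$, and from the limit computations in the proof of Theorem \ref{BTsofCMCs} its top coefficient is $\hat{p}_d=r(1)^{-1}r(0)\,p_d\in\Gamma(\hat{S}^{\perp})$. I would compute $\mathcal{D}^{\hat{\D}}_{\hat{S}}\hat{p}_d$ by exploiting the gauge relation $\D^{\lambda,\hat{q}}_{\hat{S}}=r(1)^{-1}r(\lambda)\circ\D^{\lambda,q}_S\circ r(\lambda)^{-1}r(1)$ used at the end of that proof: the gauge $r(\lambda)$ is holomorphic and invertible at $\lambda=0$ (away from $\pm\alpha,\pm\hat{\alpha}$, which may be assumed distinct from $0$), so extracting the $\mathcal{D}$-part of the transformed family at $\lambda=0$ expresses $\mathcal{D}^{\hat{\D}}_{\hat{S}}$ in terms of $\mathcal{D}_S$ conjugated by the value $r(1)^{-1}r(0)$, plus correction terms coming from the $\lambda$-derivative of $r$ at $0$ and from the $\mathcal{N}^{1,0}$, $q^{1,0}$ pieces of $\D^{\lambda}_q$. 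The goal is to show those correction terms annihilate $p_d$: the $(0,1)$-parts vanish because $\mathcal{D}^{0,1}p_d=0$ by Proposition \ref{consequences}(3), and the remaining $(1,0)$-contributions should collapse using items (4)–(5) of that same proposition, $\mathcal{D}^{1,0}p_d+\mathcal{N}^{1,0}p_{d-1}=0$ and $\mathcal{N}^{1,0}p_d+q^{1,0}p_{d-1}=0$, together with the hypothesis $\mathcal{D}^{1,0}p_d=0$ (parallelism of the original top term). The main obstacle I anticipate is precisely this bookkeeping at $\lambda=0$: one must carefully separate which poles of the various factors cancel, track the first-order term in the Taylor expansion of $r(\lambda)$ at $0$ (equivalently, the residue structure dictated by the eigenvalue data of $p^{-}_{\alpha,\tilde{L}}$ and $p_{\hat{\alpha},\overline{L}}$), and verify that the resulting $(1,0)$-type contribution to $\mathcal{D}^{\hat{\D}}_{\hat{S}}\hat{p}_d$ reduces to a combination of the Proposition \ref{consequences} identities that is forced to vanish. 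Once the holomorphic-at-$0$ structure of the gauge is combined with those algebraic relations, parallelism $\mathcal{D}^{\hat{\D}}_{\hat{S}}\hat{p}_d=0$ follows.
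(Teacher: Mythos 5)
Your spectral-deformation half has the right shape but rests on a claim that is not justified as stated: $\phi^{\mu}_{q}$ preserves connections in the sense that it intertwines $\D^{\mu}_{q}$ with $\D$, \emph{not} $\D$ with $\D$. Hence what follows directly is $\mathcal{D}_{\phi^{\mu}_{q}\Lambda}=\phi^{\mu}_{q}\circ\mathcal{D}^{\D^{\mu}_{q}}\circ(\phi^{\mu}_{q})^{-1}$, where $\mathcal{D}^{\D^{\mu}_{q}}$ is the $S\oplus S^{\perp}$-diagonal part of $\D^{\mu}_{q}$, and this differs from $\mathcal{D}_{\Lambda}$ by the multiplier terms $(\mu^{2}-1)q^{1,0}+(\mu^{-2}-1)q^{0,1}$. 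The identification of the two "on the relevant summand" is true precisely because $\mathcal{N}p_{d}\in\Gamma(S)$ and $qp_{d}=0$ (as $q\in\Omega^{1}(\Lambda\wedge\Lambda^{(1)})$ annihilates $S^{\perp}$), i.e.\ $\pi_{S^{\perp}}\circ\D^{\mu}_{q}p_{d}=\mathcal{D}p_{d}$; this is exactly the computation the paper performs, and you need to record it, since without it your intertwining claim is false (it already fails on sections of $S$).

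The B\"{a}cklund half contains a genuine gap: your proposal reduces correctly to showing $(\mathcal{D}^{\hat{\D}}_{S})^{1,0}(r(0)p_{d})=0$ (via Proposition \ref{consequences}(3) applied to $\hat{p}(\lambda)$), but then the entire content of the proof — the "bookkeeping at $\lambda=0$" — is deferred and merely asserted to "collapse." The paper never performs that Laurent-expansion bookkeeping; it sidesteps it with two ingredients your plan is missing. First, a closed formula for the diagonal $(1,0)$-part of the gauged connection $\hat{\D}=r(1)\circ\D\circ r(1)^{-1}$, namely $(\mathcal{D}^{\hat{\D}}_{S})^{1,0}=r(0)\circ(\mathcal{D}^{1,0}-q^{1,0})\circ r(0)^{-1}-\tilde{q}^{1,0}$ (Lemma 3.9 of \cite{BQ}); no $\lambda$-Taylor coefficients of $r$ beyond the values $r(0)$, $r(\infty)$ survive, and re-deriving this identity from scratch is essentially what your unperformed computation amounts to. Second, the observation that evaluating \eqref{eq:rrho} at $\lambda=0,\infty$ shows $r(0)$ and $r(\infty)$ commute with $\rho$, hence restrict to orthogonal transformations of $S^{\perp}$; this is what guarantees $r(0)p_{d}\in\Gamma(S^{\perp})$ and $r(\infty)^{-1}r(0)p_{d}\in\Gamma(S^{\perp})$, so that $q^{1,0}$ kills them and therefore $\tilde{q}^{1,0}=r(\infty)q^{1,0}r(\infty)^{-1}$ kills $r(0)p_{d}$ as well. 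With these two facts, everything vanishes using only $q_{\vert S^{\perp}}=0$ and the hypothesis $\mathcal{D}^{1,0}p_{d}=0$; notably, items (4)--(5) of Proposition \ref{consequences}, which you expected to drive the cancellation, play no role. As written, your argument for the B\"{a}cklund case is a plausible program, not a proof.
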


\begin{proof}
Suppose first that we are in the conditions of Theorem \ref{specCWwcq}. Write  $p(\lambda)=\sum_{k=-d}^d p_{k}\lambda^{k}$. Let $\pi_{S^{\perp}}$ and $\pi_{\phi^{\mu}_{q}S^{\perp}}$ denote the orthogonal projections of $\underline{\R}^{n+1,1}$ onto $S^{\perp}$ and $(S_{\phi^{\mu}_{q}\Lambda})^{\perp}=\phi^{\mu}_{q}S^{\perp}$, respectively. Suppose that $p_{d}\in\Gamma(S^{\perp})$ is parallel, $\pi_{S^{\perp}}\circ \D p_{d}=0$, and let us prove that then so is $\mu^{d}\phi^{\mu}_{q}p_{d}\in\Gamma((S_{\phi^{\mu}_{q}\Lambda})^{\perp})$, $$\pi_{\phi^{\mu}_{q}S^{\perp}}\circ \D(\mu^{d}\phi^{\mu}_{q}p_{d})=0.$$ For that, note that, as $p_{d}\in\Gamma(S^{\perp})$, we have $\mathcal{N}p_{d}\in\Gamma(S)$ and, on the other hand, $qp_{d}=0$, since $q\in\Omega^{1}(\Lambda\wedge\Lambda^{(1)})$. Thus $$
\pi_{S^{\perp}}\circ \D^{\mu}_{q}p_{d}=\mathcal{D}p_{d}=\pi_{S^{\perp}}\circ \D p_{d}=0.$$
The fact that
$\phi^{\mu}_{q}:(\underline{\R}^{n+1,1},\D^{\mu}_{q})\rightarrow
(\underline{\R}^{n+1,1},\D)$ preserves connections establishes then
$$\pi_{\phi^{\mu}_{q}S^{\perp}}\circ \D\circ \phi^{\mu}_{q}p_{d}=\phi^{\mu}_{q}\circ \pi_{S^{\perp}}\circ \D^{\mu}_{q}p_{d}=0.$$
The conclusion follows, by the constancy of $\mu$.

Suppose now that we are in the conditions of Theorem \ref{BTsofCMCs} for  $p(\lambda)=\sum_{k=-d}^d p_{k}\lambda^{k}$. Suppose, again, that $p_{d}\in\Gamma(S^{\perp})$ is parallel, $\mathcal{D}p_{d}=0$, and let us prove that then so is $$\hat{p}_{d}:=\mathrm{lim}_{\lambda\rightarrow
\infty}\,\lambda^{-d}\hat{p}(\lambda)=r(1)^{-1}\,r(0)\,p_{d},$$
the top term of $\hat{p}(\lambda)$, $\mathcal{D}_{\hat{S}}\hat{p}_{d}=0$. In view of Proposition \ref{consequences}, we are left to verify that $\mathcal{D}_{\hat{S}}^{1,0}\hat{p}_{d}=0$ or, equivalently, that $r(1)^{-1}\circ(\mathcal{D}^{\hat{\D}}_{S})^{1,0}\circ r(0)p_{d}=0$, for $$\hat{\D}:=r(1)\circ \D\circ r(1)^{-1}.$$ But $$(\mathcal{D}^{\hat{\D}}_{S})^{1,0}=r(0)\circ(\mathcal{D}^{1,0}-q^{1,0})\circ r(0)^{-1}-\tilde{q}^{1,0}$$ (see \cite{BQ}, Lemma 3.9). Now note that, evaluating \eqref{eq:rrho} at $\lambda=0$ and at $\lambda=\infty$ shows that both $r(0)$ and $r(\infty)$ commute with $\rho$, establishing, in particular, that $$r(0)_{|S^{\perp}},r(\infty)_{|S^{\perp}}\in\Gamma(O(S^{\perp})).$$ The fact that $q\in\Omega^{1}(\Lambda\wedge\Lambda^{(1)})$ vanishes in $\Gamma(S^{\perp})$ together with the parallelism of $p_{d}$ combine to complete the proof.
\end{proof}

For the particular case of $d=1$, it follows that:

\begin{corollary}
The class of parallel mean curvature vector surfaces in space-forms is preserved
under both constrained Willmore spectral deformation and B\"{a}cklund transformation, for special choices of parameters, with preservation of the space-form in the latter case.

The class of constant mean curvature surfaces in $3$-dimensional space-forms is preserved under both constrained Willmore spectral deformation and B\"{a}cklund transformation, for special choices of parameters, with preservation of both the space-form and the mean curvature, in the latter case.
\end{corollary}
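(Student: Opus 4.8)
The plan is to read this corollary as the specialisation to $d=1$ of the machinery already in place, so that, apart from tracking two scalar quantities under B\"{a}cklund transformation, no new computation is required. The ingredients are three. First, the characterisation of parallel mean curvature vector surfaces as exactly the constrained Willmore surfaces carrying a polynomial conserved quantity of type $1$ with parallel top term, with $p(1)=v_{\infty}$ fixing the space-form $S_{v_{\infty}}$ (Theorem \ref{characterisation of H-parallel surfaces}). Second, the codimension-$1$ characterisation of constant mean curvature surfaces by the mere existence of a type $1$ polynomial conserved quantity, together with the formula $H^{2}=(v_{\infty}^{\perp},v_{\infty}^{\perp})=4(\Ree(p_{1}),\Ree(p_{1}))$ (Theorem \ref{cmcH}). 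Third, the preservation statements: Theorems \ref{specCWwcq} and \ref{BTsofCMCs} show that the existence and type of a polynomial conserved quantity survive spectral deformation and B\"{a}cklund transformation (for special parameters), while the immediately preceding theorem shows that parallelism of the top term survives as well.

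For the first assertion, I would begin with a parallel mean curvature vector surface in $S_{v_{\infty}}$, use Theorem \ref{characterisation of H-parallel surfaces} to obtain a type $1$ polynomial conserved quantity $p(\lambda)$ with $p(1)=v_{\infty}$ and parallel top term, apply Theorem \ref{specCWwcq} (respectively Theorem \ref{BTsofCMCs}) to transport it to the transform, and invoke the preceding theorem to keep the top term parallel; a second application of Theorem \ref{characterisation of H-parallel surfaces} then identifies the transform as a parallel mean curvature vector surface. Preservation of the space-form under B\"{a}cklund transformation is obtained simply by evaluating $\hat{p}(\lambda)=r(1)^{-1}r(\overline{\lambda}\,^{-1})p(\lambda)$ at $\lambda=1$: since $\overline{1}\,^{-1}=1$, one finds $\hat{p}(1)=r(1)^{-1}r(1)p(1)=p(1)=v_{\infty}$, so the transform lies in the same $S_{v_{\infty}}$. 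No analogous preservation is expected under spectral deformation, where the distinguished vector becomes $\phi^{\mu}_{q}p(\mu)$.

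For the second assertion I would put $n=3$, so that codimension $1$ is preserved by both transformations. Constant mean curvature now amounts, by Theorem \ref{cmcH}, to the existence of a type $1$ polynomial conserved quantity, and preservation of the class is then immediate from Theorems \ref{specCWwcq} and \ref{BTsofCMCs}; preservation of the space-form under B\"{a}cklund transformation is again the identity $\hat{p}(1)=p(1)$. For the mean curvature, the key observation is that $r(1)^{-1}$ and $r(\overline{\lambda}\,^{-1})$ lie in $O(\underline{\C}^{n+2})$, so that $(\hat{p}(\lambda),\hat{p}(\lambda))=(p(\lambda),p(\lambda))$ as Laurent polynomials; comparing top coefficients gives $(\hat{p}_{1},\hat{p}_{1})=(p_{1},p_{1})$. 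Writing $p_{1}=\beta N$ and $\hat{p}_{1}=\hat{\beta}\hat{N}$ with $N,\hat{N}$ unit real parallel normals and $\beta,\hat{\beta}$ constant, as in Theorem \ref{cmcH}, this equality reads $\hat{\beta}^{2}=\beta^{2}$, hence $\Ree(\hat{\beta})^{2}=\Ree(\beta)^{2}$, and the formula $H^{2}=4\Ree(\beta)^{2}$ applied to both surfaces yields $\hat{H}^{2}=H^{2}$.

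I expect the last step to be the only real obstacle. Preservation of type and of top-term parallelism are direct citations, but invariance of the mean curvature is a genuine computation, made delicate by the fact that $\hat{p}_{1}=r(1)^{-1}r(0)p_{1}$ is built from the dressing gauge $r$, whose values at $0,1,\infty$ are not real in general, so that $\Ree(\hat{p}_{1})$ cannot be read off directly. The orthogonality of $r$ sidesteps this by controlling the scalar $(\hat{p}_{1},\hat{p}_{1})$ rather than the vector $\hat{p}_{1}$ itself, and the resulting sign ambiguity $\hat{\beta}=\pm\beta$ is harmless, since only $\Ree(\beta)^{2}$ enters $H^{2}$ and the sign of $\hat{H}$ is anyway fixed only up to the choice of unit normal.
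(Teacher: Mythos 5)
Your proposal is correct and follows essentially the same route as the paper: class preservation by chaining Theorems \ref{characterisation of H-parallel surfaces}, \ref{specCWwcq}, \ref{BTsofCMCs} and the top-term-parallelism theorem; space-form preservation from $\hat{p}(1)=p(1)$; and mean-curvature preservation from $(\hat{p}(\lambda),\hat{p}(\lambda))=(p(\lambda),p(\lambda))$, writing $p_{1}=\beta N$, $\hat{p}_{1}=\hat{\beta}\hat{N}$ to get $\hat{\beta}=\pm\beta$ and hence $(\Ree(\hat{p}_{1}),\Ree(\hat{p}_{1}))=(\Ree(p_{1}),\Ree(p_{1}))$, exactly as in the paper's argument via Theorem \ref{cmcH}.
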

\begin{proof}
The preservation of the space-form under B\"{a}cklund transformation is a consequence of the fact that, in the conditions of Theorem \ref{BTsofCMCs}, $\hat{p}(1)=p(1)$. Assuming now that $n=3$, we can take unit sections $N$ and $\hat{N}$ such that $S^{\perp}=\langle N\rangle$ and $\hat{S}^{\perp}=\langle \hat{N}\rangle$. Considering the sections $\beta\in\Gamma(\mathbb{C})$ and $\hat{\beta}\in\Gamma(\mathbb{C})$ such that $p_{1}=\beta N$ and $\hat{p}_{1}=\hat{\beta}\hat{N}$, we get, by virtue of $(\hat{p}(\lambda),\hat{p}(\lambda))=(p(\lambda),p(\lambda))$ (recall Theorem \ref{BTsofCMCs}), that $\hat{\beta}^{2}=(\hat{p}_{1},\hat{p}_{1})=(p_{1},p_{1})=\beta^{2}$ (which is constant). Therefore $\hat{\beta}=\beta$ or $\hat{\beta}=-\beta$. According to Theorem \ref{cmcH}, we obtain the preservation of the mean curvatures of $\Lambda$ and $\hat{\Lambda}$ in $S_{p(1)}=S_{\hat{p}(1)}$, since $$(\Ree(\hat{p}_{1}),\Ree(\hat{p}_{1}))=(\Ree(\hat{\beta}))^{2}=(\Ree(\beta))^{2}=(\Ree(p_{1}),\Ree(p_{1})).$$
\end{proof}

\providecommand{\bysame}{\leavevmode\hbox
to3em{\hrulefill}\thinspace}
\providecommand{\MR}{\relax\ifhmode\unskip\space\fi MR }
\providecommand{\MRhref}[2]{%
  \href{http://www.ams.org/mathscinet-getitem?mr=#1}{#2}
} \providecommand{\href}[2]{#2}

\end{document}